\newtheorem{lemma}{Lemma}
\newtheorem{thm}{Theorem}
\newtheorem{prop}{Proposition}
\newtheorem{definition}{Definition}
\newtheorem{example}{Example}
\newtheorem{corollary}{Corollary}
\newtheorem{remark}{Remark}
\newcounter{eqn}
\newcommand{\myeq}[1]{
    \begin{align*}
    #1
    \tag{\theeqn}
    \label{eq:\theeqn}
    \refstepcounter{eqn}
    \end{align*}
}
\numberwithin{equation}{section}
\title[Hermite type sampling Kantorovich operators in Mixed Norm Spaces]
{On Hermite type sampling Kantorovich operators in the settings of mixed norm Spaces}
\keywords{ Sampling Operators, Modulus of Smoothness, Degree of Approximation, Mixed Norm spaces, Direct Approximation Results, Simultaneous Approximation}
\subjclass[2010] {41A35; 94A20; 41A25; 26A15} 
\author{Puja Sonawane}
\address{Department of Mathematics, Indian Institute of Technology Madras, Chennai-600036, Tamil Nadu, India}
\email{ma22d022@smail.iitm.ac.in}
\author{A. Sathish Kumar}
\address{Department of Mathematics, Indian Institute of Technology Madras, Chennai-600036, Tamil Nadu, India}
\email{sathishkumar@iitm.ac.in, mathsatish9@gmail.com}
\begin{document}

\begin{abstract}
In this paper, we analyze the convergence behavior of Hermite-type sampling Kantorovich operators in the context of mixed norm spaces. We prove certain direct approximation theorems, including the uniform convergence theorem, the Voronovskaja-type asymptotic formula, and an estimate of error in the approximation in terms of the modulus of continuity in mixed norm settings. Next, we estimate the rate of convergence of these sampling Kantorovich operators in terms of the modulus of continuity. In addition, we obtain simultaneous approximation results of these sampling Kantorovich operators, including the uniform approximation, the asymptotic formula, and the approximation error in terms of the modulus of continuity in mixed settings.  
Finally, using cardinal $B$-splines, the implementation of differentiable functions has been shown.

\end{abstract}

\maketitle
\section{Introduction and Preliminaries}\label{section1}

\noindent 
The Shannon Sampling Theorem, a fundamental and important result in Fourier analysis, was independently discovered by Whittaker, Kotelnikov, and Shannon. It states that if a function $f$ is band-limited to $[-\pi w, \pi w],$ $w>0,$ then $f$ can be completely reconstructed by the reconstruction formula 
\begin{eqnarray}\label{eq11}
f(x)=\sum_{k=-\infty}^{\infty}f\bigg(\frac{k}{w}\bigg)\frac{\sin \pi(wx-k)}{\pi(wx-k)},\ \ \ \ (x\in{\mathbb{R}}).
\end{eqnarray}
The above sampling theorem is used in many areas like information theory, signal and image processing, artificial intelligence, etc., mainly because it can reconstruct a signal without losing the actual information. The sampling and reconstruction problems in the setting of uniform and non-uniform sampling have been analyzed by several researchers, see \cite{Ald2, Ald3, but1, Feic, gar1, Heil, jeri, sun} and the references therein. Further, the above sampling series has been analyzed in the direction of approximation theory point of view. In particular, the rate of convergence and $L^p-$ approximation behavior of these sampling series (\ref{eq11}) has been discussed in the series of papers, see \cite{mulima,Lpbut,Fang,Rah} and the references therein.

To approximate the functions that are not necessarily band-limited, Butzer and his school initiated the study of generalized sampling operators. These generalized sampling operators play an important role in signal and image processing, since signals are not necessarily band-limited in nature. So, the approximation behavior of these sampling series is of great importance. For $f:\mathbb{R}\to \mathbb{R},$  $w>0,$ the generalized sampling operator is defined by (see \cite{but6})
\begin{equation}\label{g1}
(S_{w}^{\chi}f)(x)=\sum_{k=-\infty}^{\infty}f\left(\frac{k}{w}\right)\chi(wx-k), \ \ \ \hspace{.5cm} (x\in \mathbb{R}),
\end{equation}
where $\chi:\mathbb{R}\to \mathbb{R}$ is a suitable kernel satisfying certain assumptions. The direct approximation theorems for the above sampling series were discussed in \cite{butss,Stens2}. The approximation of discontinuous signals by generalized sampling series was analyzed in \cite{discont}. The converse approximation theorems of $S_{w}^{\chi}$ were proved in \cite{conver}. The approximation behavior of the multivariate signal by these sampling series has been studied in \cite{plb,plb1}.
The convergence results for the sampling type operators have been studied by several authors in different settings, see \cite{metunc,bmen1,bmen,plb3,cold} and the references therein. Furthermore, to approximate the Lebesgue integrable functions on the real line, the above sampling operators have been modified in \cite{kan1} by replacing the exact sample value $f\left(\frac{k}{w}\right)$ with the average of $f$ in the interval $[k/w,(k+1)/w].$ Such operators are called Kantorovich sampling operators. For $f:\mathbb{R}\to \mathbb{R}, w>0,$ the sampling Kantorovich operator is defined by

\begin{equation}\label{g2}
    (K_{w}^{\chi}f)(x)=\displaystyle\sum_{k=-\infty}^{\infty}\left(w\int_{\frac{k}{w}}^{\frac{k+1}{w}}f(u) du \right)\chi(wx-k), \ \ \ \hspace{.5cm} (x\in \mathbb{R}).
\end{equation}
These operators are useful in signal processing, since more information is usually available in the neighbourhood of a point than at the point itself. In recent years the direct and inverse approximation theorems have been discussed for the Kantorovich sampling operators in different settings; see \cite{ kan3, kan4, kan5, kan6, kan7, kan8,kan2} and the references therein.

Very recently, Corso \cite{cors} considered the following generalized sampling operators to analyze the approximation behavior of a differentiable function. For a $n$ times differentiable function $f:\mathbb{R}\to \mathbb{R},$  $w>0,$ the Hermite type sampling operator is defined by
\begin{equation}\label{g3}
(G_{n,w}f)(x)=\sum_{k=-\infty}^{\infty}\left(\sum_{j=0}^{n}f^{(j)}\left(\frac{k}{w}\right)\left(x-\frac{k}{w}\right)^{j}\right)\chi(wx-k), \ \ \ \hspace{.5cm} (x\in \mathbb{R}). 
\end{equation}
We note that if $n=0,$ the above sampling operator $G_{n,w}$ reduces to the generalized sampling operator $S_{w}^{\chi}.$ Further, we note that if $n>1,$ $w=\frac{1}{\tau}$ and suitable choice of the kernel, the Hermite sampling operators is related to the following sampling operators: For a band-limited and continuous function $f\in L^2(\mathbb{R}),$ we have the following reconstruction formula
\begin{eqnarray}\label{eq12}
f(x)=\sum_{k=-\infty}^{\infty}(f(k\tau)+f^{'}(k\tau)(x-k\tau))\sin c^2\left(\frac{x}{\tau}-k\right),\ \ \ \ \left(\tau=\frac{2}{w}\right)
\end{eqnarray}
for every $x\in\mathbb{R}$. Here, $\sin c$ is given by 
$$\sin c(u)=\begin{cases} 
      \dfrac{\sin \pi u}{\pi u}, & \text{if} \ u\ne 0\\
      1, &  \text{if} \ u=0.

   \end{cases}$$
The above reconstruction formula was proved by Jagerman and Fogel in \cite{jagf}. They have improved the sampling rate as compared to the original Shannon's reconstruction formula. The advantage of the above sampling formula is that one can reconstruct or approximate the functions with the help of a few sample points over an interval of same length. Further to improve the sampling rate Linden and Abramson modified the above formula in \cite{dal} for a band-limited and continuous function $f\in L^2(\mathbb{R}).$ For every $x\in\mathbb{R}$ and for an even greater sampling rate $\tau,$ we have the following reconstruction formula: 
\begin{eqnarray}
f(x)=\sum_{k=-\infty}^{\infty}\left(\sum_{j=0}^{n}\dfrac{1}{j!}f_{j}(k\tau)(x-k\tau)^{j}\right)\sin c^{n+1}\left(\frac{x}{\tau}-k\right),\ \ \ \ \left(\tau=\frac{n+1}{w}\right)
\end{eqnarray}
where
$$f_{j}(x)=\sum_{i=0}^{j}\binom{j}{i}\left(\dfrac{\pi w}{n+1}\right)^{j-i}\Gamma^{j-i}_{n+1}(0)f^{(i)}(x), \ \ \Gamma^{m}_{n+1}(x)=\dfrac{d^{m}}{dx^{m}}\left(\dfrac{1}{\sin c(\frac{x}{\pi})}\right)^{n+1}.$$

Inspired by the above works, we propose and analyze the approximation behavior of Hermite-type sampling Kantorovich operators in the settings of mixed norm. Analyzing the approximations of functions in mixed norm spaces is very important, since in sampling and reconstruction, signals are typically defined in time domains or space fields. However, in practice, some signals are time-varying. This implies that the signals can live in time-space domains simultaneously. A suitable tool for analyzing such signals is mixed norm spaces. This is a major difference from the classical Lebesgue spaces, see \cite{Aben1, Aben2, Fer} and the references therein. Recently, the sampling and reconstruction problems in mixed norm spaces has been studied by several authors, see \cite{mix1, mix2, mix3, mix4, mix5, mixx} and the references therein.

 \subsection{Preliminaries and Notations}
Let  $\mathbb{N}^{d},\mathbb{Z}_{+}^{d}$ and $\mathbb{Z}^{d}$ represent the set of vectors $\mathbf{k}=(k_{1},k_{2},...,k_{d},)$ where $k_{i}$ are positive integers, non-negative integers and integers, respectively, for $ i=1,2,..,d.$  We define $|\mathbf{k}|:=k_{1}+k_{2}+...+k_{d}.$ Furthermore, let $\mathbb{R}^{d}$ denote the $d$ dimensional Euclidean space consisting of all vectors $(x_{1},x_{2},...,x_{d}),$ where $x_{i} \in \mathbb{R}$ for $i=1,2,...,d.$\\
Given $\mathbf{x},\mathbf{y} \in \mathbb{R}^{d}$ and $\alpha \in \mathbb{R},$ the standard operations are given by
$$\mathbf{x}+\mathbf{y}:=(x_{1}+y_{1},x_{2}+y_{2},...,x_{d}+y_{d}),$$
$$\alpha\mathbf{x}:=(\alpha x_{1},\alpha x_{2},...,\alpha x_{d}).$$
Let $ \alpha^{\mathbf{x}}:=(\alpha^{x_{1}},\alpha^{x_{2}},...,\alpha^{x_{d}})$ with $\alpha>0$ and $\mathbf{x}^{\mathbf{y}}:=\prod_{i=1}^{d}x_{i}^{y_{i}},$ when the power is well defined.
If  $\boldsymbol{\alpha},\ \boldsymbol{\beta} \in \mathbb{Z}_{+}^{d},$ we set \\
$$\displaystyle \boldsymbol{\alpha}!=\prod_{i=1}^{d}(\alpha_{i}!), \ \binom{\boldsymbol{\beta}}{\boldsymbol{\alpha}}=\dfrac{\boldsymbol{\beta}!}{\boldsymbol{\alpha}!(\boldsymbol{\beta}-\boldsymbol{\alpha})!}, \ \partial^{\boldsymbol{\alpha}}f=\dfrac{\partial^{|\boldsymbol{\alpha}|}f}{\partial x_{1}^{\alpha_{1}}...\partial x_{d}^{\alpha_{d}}}.$$ \\
By $C_{b}(\mathbb{R}^{d})$ (respectively, $C(\mathbb{R}^{d})$) we denote the space of all continuous and bounded (respectively, uniformly continuous and bounded) functions $f:\mathbb{R}^{d}\to\mathbb{R},$ endowed with the supremum norm $\|f\|_{\infty}:=\displaystyle\sup_{\mathbf{x}\in\mathbb{R}^{d}}|f(\mathbf{x})|.$
Similarly,  $C_{b}(\mathbb{R}\times\mathbb{R}^{d})$ (respectively, $C(\mathbb{R}\times\mathbb{R}^{d})$) we denote the space of all continuous and bounded (respectively, uniformly continuous and bounded) functions $f:\mathbb{R}\times\mathbb{R}^{d}\to \mathbb{R},$ endowed with the supremum norm $\|f\|_{\infty}:=\displaystyle\sup_{\substack{x\in \mathbb{R}\\\mathbf{y}\in\mathbb{R}^{d}}}|f(x,\mathbf{y})|.$
Let  $C^{n}(\mathbb{R}^{d})$ be the space of all $n$-times continuously differentiable functions for which
$\partial^{\boldsymbol{\alpha}}f=\dfrac{\partial^{|\boldsymbol{\alpha}|}f}{\partial x_{1}^{\alpha_{1}}...\partial x_{d}^{\alpha_{d}}}\in C(\mathbb{R}^{d})$ with $|\boldsymbol{\alpha}|=n.$
We define a mixed Lebesgue space as follows. For $1\leq p,q<\infty,$ let $L^{p,q}(\mathbb{R}\times \mathbb{R}^{d})$ denote the space of all complex-valued measurable functions $f$ defined on $\mathbb{R}\times \mathbb{R}^{d}$ such that 
$$\|f\|_{L^{p,q}}:=\left(\int_{\mathbb{R}}\left(\int_{\mathbb{R}^{d}}|f(x,\mathbf{y})|^{q}d\mathbf{y}\right )^{p/q}dx\right)^{1/p}<\infty.$$
Further, let $L^{\infty, \infty}(\mathbb{R} \times \mathbb{R}^{d})$ denote the set of all complex-valued measurable functions on $\mathbb{R}^{d+1}$ such that $\|f\|_{L^{\infty, \infty}} := ess \sup |f| < \infty$.
For $ 1\leq p,q < \infty$, $\ell^{p,q}(\mathbb{Z}\times \mathbb{Z}^{d})$ denotes the space of all complex sequences $c=\big(c(k_{1},k_{2})\big)_{(k_{1}\in \mathbb{Z}, k_{2}\in \mathbb{Z}^{d})}$ such that 
$$\|c\|_{\ell^{p,q}} := \left(\sum_{k_{1}\in \mathbb{Z}}\left(\sum_{k_{2} \in \mathbb{Z}^{d}}|c(k_{1}, k_{2})|^{q}\right)^{p/q}\right)^{1/p} < \infty.$$
We denote by $\ell^{\infty, \infty}(\mathbb{Z} \times \mathbb{Z}^{d}),$ the space of all complex sequences on $\mathbb{Z}^{d+1}$ such that $$\|c\|_{\ell^{\infty, \infty}} := \displaystyle \sup_{k \in \mathbb{Z}^{d+1}}|c(k)| < \infty.$$ 
We note that $L^{p,p}(\mathbb{R} \times \mathbb{R}^{d}) = L^{p}(\mathbb{R}^{d+1})$ and $\ell^{p,p}(\mathbb{Z} \times \mathbb{Z}^{d}) = \ell^{p}(\mathbb{Z}^{d+1})$ for $1 \leq p < \infty.$\vspace{0.5cm} \\ 
Let $\chi:\mathbb{R}\to \mathbb{R}.$ We say that $\chi$ is a kernel of order $n$ if it satisfies the following conditions.
\begin{itemize}
    \item[(i)] 
    $\displaystyle\sum_{k \in \mathbb{Z}} \chi(u-k)=1,$ for every $u \in \mathbb{R}.$
    \item[(ii)] The series 
     $\displaystyle\sum_{k \in \mathbb{Z}} |u-k|^{n} |\chi(u-k)|$ is uniformly convergent in $\mathbb{R}.$
\end{itemize}
For kernel $\chi$ and $\alpha>0,$ we define discrete algebraic moments as
 $$m_{\alpha}(\chi,u):=\displaystyle\sum_{k \in \mathbb{Z}} (k-u)^{\alpha} \chi(u-k), \ u\in \mathbb{R} $$
 and the discrete absolute moment by 
  $$M_{\alpha}(\chi):=\displaystyle\sup_{u\in\mathbb{R}}\sum_{k \in \mathbb{Z}} |u-k|^{\alpha} |\chi(u-k)|.$$

  \begin{remark}\label{rmk1}
     Note that if  $ \chi$ is a kernel of order $ \alpha$, then $\chi$ is kernel of order $\beta,$ i.e. $M_{\beta}(\chi)<\infty $ for every $0\leq\beta\leq\alpha.$
    Further, we have $\displaystyle\lim_{w\to\infty}\sum_{\substack{k \in \mathbb{Z}\\|u-k|\geq w}}|u-k|^{n} |\chi(u-k)|=0$ is uniformly convergent in $\mathbb{R}.$
  \end{remark}
In this paper, we study the approximation behaviour of the Kantorovich-Hermite-type sampling operator in the context of mixed settings. 
\begin{definition}
 Let $f:\mathbb{R}\times\mathbb{R}^{d}\to \mathbb{R}$ be $n$ times differentiable function and $\phi, \psi_{i},\ i=1,...,d,$ are  kernels of order $n.$ For $w>0,$ we define the following Kantorovich-Hermite type sampling operators: 
\begin{eqnarray*}
    K_{n,w}^{\phi,\psi}f(x,\bold{y})&=&\displaystyle\sum_{k\in \mathbb{Z}}\sum_{\mathbf{m} \in\mathbb{Z}^{d}}  \phi(wx-k)\  \prod_{i=1}^{d}\psi_{i}(wy_{i}-m_{i})\\&& \times
\left(\sum_{l+|\bold{j}|\leq n}\dfrac{1}{l! \bold{j}!}w^{d+1}\int_{I_{k}^{w}}\int_{I_{\mathbf{m}}^{w}}\dfrac{\partial^{l+|\bold{j} |}f(u,\bold{v})}{\partial u^{l}\partial \bold{v}^{\bold{j}}}{(x-u)}^{l}{(\bold{y}-\bold{v})^{\bold{j}}}\  du \ d\bold{v}\right),
\end{eqnarray*}
where $x \in \mathbb{R} , \ \bold{y} \in \mathbb{R}^{d},\ I_{k}^{w}=\left[\dfrac{k}{w},\dfrac{k+1}{w}\right] \ \text{and} \ I_{\mathbf{m}}^{w}=\left[\dfrac{m_{1}}{w},\dfrac{m_{1}+1}{w}\right]\times...\times\left[\dfrac{m_{d}}{w},\dfrac{m_{d}+1}{w}\right].$\vspace{0.1cm} We note that if $\phi$ and $\psi_{i},$ $ i=1,...,d,$ are  kernels of order $n,$ and all partial derivatives of $f$ upto order $n$ are bounded, then the sampling operator $K_{n,w}^{\phi,\psi} $ is well-defined. Indeed, we obtain 
 \begin{eqnarray*}
     && |K_{n,w}^{\phi,\psi}f(x,\mathbf{y})| \\
      &&\leq\displaystyle\sum_{k\in \mathbb{Z}}\sum_{\mathbf{m}\in\mathbb{Z}^{d}} |\phi(wx - k)| \prod_{i=1}^{d} |\psi_{i}(wy_{i}-m_{i})| w^{d+1}  \sum_{l+|\bold{j}|\leq n}\dfrac{1}{l! \bold{j}!}  \| \partial^{l+\bold{j} }f\|_{\infty} \\ 
     && \times \int_{ I_{k}^{w}} \int_{ I_{\mathbf{m}}^{w}}  2^{l-1}\left(\left|x-\dfrac{k}{w}\right|^{l}+\left|\dfrac{k}{w}-u\right|^{l}\right) \prod_{i=1}^{d} 2^{j_{i}-1}\left(\left|y_{i}-\dfrac{m_{i}}{w}\right|^{j_{i}}+\left|\dfrac{m_{i}}{w}-v_{i}\right|^{j_{i}}\right) du \ d\bold{v} \\
      &&\leq  \sum_{l+|\bold{j}|\leq n}\dfrac{1}{l! \bold{j}!} \dfrac{2^{l+|\mathbf{j}|-(d+1)}}{w^{l+|\mathbf{j}|}} \| \partial^{l+\bold{j} }f\|_{\infty} (M_{l}(\phi)+M_{0}(\phi)) \prod_{i=1}^{d}  (M_{j_{i}}(\psi_{i})+M_{0}(\psi_{i})),
     \end{eqnarray*}
for all $x \in \mathbb{R} , \ \bold{y} \in \mathbb{R}^{d}.$
\end{definition}
The structure of the paper is organized as follows. The main aim of this paper is to analyze the direct convergence theorems of Hermite-type sampling Kantorovich operators $ K_{n,w}^{\phi,\psi}$ in the context of a mixed norm. In Section \ref{sec2}, we first prove that if $f \in C_{b}^{n}(\mathbb{R} \times \mathbb{R}^{d}),$ then the sampling Kantorovich operators $K_{n,w}^{\phi,\psi}f\in C_{b}^{p}(\mathbb{R} \times \mathbb{R}^{d}).$ Next, we prove the uniform convergence theorem for these sampling operators. Further, we obtain the rate of convergence of $K_{n,w}^{\phi,\psi}$ for $f \in C(\mathbb{R} \times \mathbb{R}^{d}).$ The main tool to estimate the rate of convergence is the modulus of continuity. In this result, we have shown that the error in the approximation can be controlled by the modulus of continuity in the mixed norm. Furthermore, we derive the Voronovskaja-type asymptotic formula for $K_{n,w}^{\phi,\psi}.$ The Section \ref{sec3}, deals with the simultaneous approximation results, including the uniform approximation theorem, the asymptotic formula, and the approximation error in terms of the modulus of continuity in mixed setting for these sampling Kantorovich operators $K_{n,w}^{\phi,\psi}.$ In the last Section, we show the implementation of differentiable functions by these operators using cardinal $B$-spline kernel.

\section{Direct Approximation Results}\label{sec2}
In this section, we discuss certain direct approximation results, including uniform convergence theorem, Voronovskaja-type asymptotic formula and an estimate of error in the approximation in terms of the modulus of continuity in the settings of mixed norm.

\begin{prop}\label{prop1}
   Let $\phi$ and $\psi_{i} \in C_{b}^{p}(\mathbb{R} ), \ i=1,...,d$ be a kernel of order $n$ such that for every $q=1,...,p, \ p\in \mathbb{N},$ the series $$\displaystyle\sum_{k \in \mathbb{Z}} |u-k|^{n} |\phi^{(q)}(u-k)|\ \text{and} \ \sum_{k \in \mathbb{Z}} |u-k|^{n} |\psi_{i}^{(q)}(u-k)|,\ \ i=1,...,d$$ are uniformly convergent in $\mathbb{R}.$
Then for any $f \in C_{b}^{n}(\mathbb{R} \times \mathbb{R}^{d}),$ we have $K_{n,w}^{\phi,\psi}f\in C_{b}^{p}(\mathbb{R} \times \mathbb{R}^{d}).$
\end{prop}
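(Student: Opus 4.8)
The plan is to show, for every multi-index $\boldsymbol{\alpha}=(\alpha_{0},\alpha_{1},\dots,\alpha_{d})$ with $|\boldsymbol{\alpha}|\le p$ (where $\alpha_{0}$ is the order of differentiation in $x$ and $\alpha_{i}$ that in $y_{i}$), that the series defining $K_{n,w}^{\phi,\psi}f$ may be differentiated term by term, that the differentiated series converges absolutely and uniformly on $\mathbb{R}\times\mathbb{R}^{d}$, and that its sum is continuous and bounded; the convergence of the undifferentiated series is already part of the Definition. First I would fix a pair $(k,\mathbf{m})$ and a pair $(l,\mathbf{j})$ with $l+|\mathbf{j}|\le n$ and compute $\partial^{\boldsymbol{\alpha}}$ of the corresponding summand. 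Since the domain of integration $I_{k}^{w}\times I_{\mathbf{m}}^{w}$ is compact and the integrand is smooth in $(x,\mathbf{y})$, one may differentiate under the integral sign; moreover the $x$-dependence of the summand sits only in $\phi(wx-k)$ and in $(x-u)^{l}$, and the $y_{i}$-dependence only in $\psi_{i}(wy_{i}-m_{i})$ and in $(y_{i}-v_{i})^{j_{i}}$. Applying the one-dimensional Leibniz rule once in $x$ and once in each $y_{i}$ therefore expresses $\partial^{\boldsymbol{\alpha}}$ of that summand as a finite linear combination, with coefficients built from binomial coefficients and falling factorials, of terms of the form
\[
w^{q_{0}+|\mathbf{q}'|}\,\phi^{(q_{0})}(wx-k)\prod_{i=1}^{d}\psi_{i}^{(q_{i})}(wy_{i}-m_{i})\;w^{d+1}\int_{I_{k}^{w}}\int_{I_{\mathbf{m}}^{w}}\frac{\partial^{l+|\mathbf{j}|}f(u,\mathbf{v})}{\partial u^{l}\partial\mathbf{v}^{\mathbf{j}}}\,(x-u)^{l'}\prod_{i=1}^{d}(y_{i}-v_{i})^{j_{i}'}\,du\,d\mathbf{v},
\]
with $0\le q_{0}\le\alpha_{0}$, $0\le q_{i}\le\alpha_{i}$, $l'=l-(\alpha_{0}-q_{0})$ and $j_{i}'=j_{i}-(\alpha_{i}-q_{i})$, the term vanishing unless $l'\ge0$ and every $j_{i}'\ge0$; the key structural point is that each polynomial factor that survives has degree $l'\le l\le n$ and $j_{i}'\le j_{i}\le n$.

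Next I would bound a generic such term in absolute value and sum it over $(k,\mathbf{m})$. Pulling the mixed derivative of $f$ -- which is bounded because $f\in C_{b}^{n}$ and $l+|\mathbf{j}|\le n$ -- out of the integral, using $|x-u|\le|x-\tfrac{k}{w}|+\tfrac{1}{w}$ on $I_{k}^{w}$ and $|y_{i}-v_{i}|\le|y_{i}-\tfrac{m_{i}}{w}|+\tfrac{1}{w}$ on $I_{m_{i}}^{w}$, together with $|I_{k}^{w}\times I_{\mathbf{m}}^{w}|=w^{-(d+1)}$, the $(k,\mathbf{m})$-sum factorizes and is controlled by a fixed power of $w$ times
\[
\Bigl(\sum_{k\in\mathbb{Z}}|\phi^{(q_{0})}(wx-k)|\bigl(|x-\tfrac{k}{w}|+\tfrac{1}{w}\bigr)^{l'}\Bigr)\prod_{i=1}^{d}\Bigl(\sum_{m_{i}\in\mathbb{Z}}|\psi_{i}^{(q_{i})}(wy_{i}-m_{i})|\bigl(|y_{i}-\tfrac{m_{i}}{w}|+\tfrac{1}{w}\bigr)^{j_{i}'}\Bigr).
\]
After the substitution $v=wx$ (resp. $v=wy_{i}$) and the elementary inequality $(a+1)^{r}\le 2^{r-1}(a^{r}+1)$ for $a\ge0$, $r\ge1$, each factor is at most a fixed power of $w$ times $\widetilde{M}_{l'}(\phi^{(q_{0})})+\widetilde{M}_{0}(\phi^{(q_{0})})$ (respectively for $\psi_{i}^{(q_{i})}$), where $\widetilde{M}_{r}(g):=\sup_{v\in\mathbb{R}}\sum_{k\in\mathbb{Z}}|v-k|^{r}|g(v-k)|$. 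This is where the hypothesis is used: for $q\in\{1,\dots,p\}$ the series $\sum_{k}|u-k|^{n}|\phi^{(q)}(u-k)|$ is uniformly convergent by assumption, $\phi^{(q)}$ is bounded, and at most three integers lie within distance one of any point, so splitting the sum at $|u-k|=1$ exactly as in Remark~\ref{rmk1} gives $\widetilde{M}_{r}(\phi^{(q)})<\infty$ for all $0\le r\le n$ and $q=1,\dots,p$ (the case $q=0$ being the order $n$ kernel condition on $\phi$ itself, again via Remark~\ref{rmk1}); likewise for each $\psi_{i}$. Hence the $(k,\mathbf{m})$-sum of the absolute values of the differentiated summands is dominated, uniformly in $(x,\mathbf{y})$, by a finite constant depending only on $w,n,p,d$, the kernels, and the supremum norms of the partial derivatives of $f$ up to order $n$.

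Finally, with this uniform domination established for every $\boldsymbol{\alpha}$ with $|\boldsymbol{\alpha}|\le p$, I would invoke the classical theorem on term-by-term differentiation of series of $C^{1}$ functions, applied iteratively one variable and one order of differentiation at a time: each summand is $C^{p}$ in $(x,\mathbf{y})$ since $\phi,\psi_{i}\in C_{b}^{p}$ and the inner integral is a polynomial in $(x,\mathbf{y})$, so the uniform convergence of the relevant first-order derivative series at each step yields $K_{n,w}^{\phi,\psi}f\in C^{p}(\mathbb{R}\times\mathbb{R}^{d})$, with $\partial^{\boldsymbol{\alpha}}K_{n,w}^{\phi,\psi}f$ obtained by differentiating the series term by term for $|\boldsymbol{\alpha}|\le p$. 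Since each such series has continuous summands and converges uniformly on $\mathbb{R}\times\mathbb{R}^{d}$, its sum is continuous, and by the uniform bound above it is bounded; hence $K_{n,w}^{\phi,\psi}f\in C_{b}^{p}(\mathbb{R}\times\mathbb{R}^{d})$. The argument is organizational rather than conceptual, and the step I expect to require the most care is the bookkeeping of the multivariate Leibniz expansion together with the observation that every polynomial factor it produces has degree at most $n$, which is what makes the hypotheses on the weighted sums of $\phi^{(q)}$ and $\psi_{i}^{(q)}$ -- combined with the Remark~\ref{rmk1}-type reduction from the $n$-th to the $r$-th weighted sum -- exactly sufficient.
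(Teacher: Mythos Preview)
Your proposal is correct and follows essentially the same approach as the paper: both show that the series defining $K_{n,w}^{\phi,\psi}f$ and all its termwise derivatives up to order $p$ converge uniformly, using the moment-type bounds $\widetilde{M}_{r}(\phi^{(q)})<\infty$ for $0\le r\le n$, $0\le q\le p$ (derived from the hypothesis via Remark~\ref{rmk1}), and then invoke the standard term-by-term differentiation theorem. The paper packages this by introducing explicit partial sums $S_{N,w}^{\phi,\psi}f$ and showing they and their derivatives converge uniformly, while you go straight to a Weierstrass $M$-test on the termwise differentiated series; your treatment of the Leibniz expansion is in fact more explicit than the paper's, which simply remarks that the derivative is a linear combination of functions of the form $(u-x)^{r}$, $(y_{i}-v_{i})^{r}$, $\phi^{(q)}(wu-k)$, $\psi_{i}^{(q)}(wu-k)$ and repeats the argument.
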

\begin{proof}
    We know that  $\phi, \psi_{i}, \ i=1,...,d $ is a kernel of order $n.$ Then using Remark \ref{rmk1}, the following series
    \myeq{
        \sum_{k \in \mathbb{Z}} |u-k|^{r} |\phi^{(q)}(u-k)| \ and \ \sum_{k \in \mathbb{Z}} |u-k|^{r} |\psi_{i}^{(q)}(u-k)|,\ \ i=1,...,d 
    }
are uniformly convergent in  $\mathbb{R},$ for $r=0,1,...,n$ and $q=0,1,...,p.$
Let us consider the following partial sum $S_{N,w}^{\phi,\psi}$ of the Hermite type Kantorovich sampling operators: 
\begin{eqnarray*}
   S_{N,w}^{\phi,\psi}f(x,\mathbf{y}) &=&\displaystyle\sum\sum_{k+|\mathbf{m}| \leq N}  \phi(wx-k)\  \prod_{i=1}^{d}\psi_{i}(wy_{i}-m_{i})\\&& \times
\left(\sum_{l+|\bold{j}|\leq n}\dfrac{1}{l! \bold{j}!}w^{d+1}\int_{I_{k}^{w}}\int_{I_{\mathbf{m}}^{w}}\dfrac{\partial^{l+|\bold{j} |}f(u,\bold{v})}{\partial u^{l}\partial \bold{v}^{\bold{j}}}{(x-u)}^{l}{(\bold{y}-\bold{v})^{\bold{j}}}\  du \ d\bold{v}\right).
\end{eqnarray*}
Clearly, we have $S_{N,w}^{\phi,\psi}f \in  C_{b}^{p}(\mathbb{R} \times \mathbb{R}^{d}). $
We now show that  $K_{n,w}^{\phi,\psi}f\in C_{b}^{p}(\mathbb{R} \times \mathbb{R}^{d})$ for  $f \in C_{b}^{n}(\mathbb{R} \times \mathbb{R}^{d}).$
Let $\epsilon>0$ be given. We estimate
\begin{eqnarray*}
 &&|K_{n,w}^{\phi,\psi}f(x,\mathbf{y}) - S_{N,w}^{\phi,\psi}f(x,\mathbf{y})|\\
&&\leq \displaystyle\sum\sum_{k+|\mathbf{m}| > N}  |\phi(wx - k)| \prod_{i=1}^{d} |\psi_{i}(wy_{i}-m_{i})| \\
&& \times w^{d+1}  \int_{ I_{k}^{w}} \int_{ I_{\mathbf{m}}^{w}}\left|\sum_{l + |\mathbf{j}| \leq n} \dfrac{1}{l! \mathbf{j}!}  
      \dfrac{\partial^{l + |\mathbf{j}|} f(u, \mathbf{v})}{\partial u^l \partial \mathbf{v}^{\mathbf{j}}} (x - u)^l (\mathbf{y} - \mathbf{v})^{\mathbf{j}} \right| \, du \, d\mathbf{v}.
\end{eqnarray*}

By applying Jensen's inequality, we obtain 
\begin{eqnarray*}
     && |K_{n,w}^{\phi,\psi}f(x,\mathbf{y}) - S_{N,w}^{\phi,\psi}f(x,\mathbf{y})| \\
      &&\leq\displaystyle\sum\sum_{k+|\mathbf{m}| > N} |\phi(wx - k)| \prod_{i=1}^{d} |\psi_{i}(wy_{i}-m_{i})| w^{d+1}  \sum_{l+|\bold{j}|\leq n}\dfrac{1}{l! \bold{j}!}  \| \partial^{l+\bold{j} }f\|_{\infty} \\ 
     && \times \int_{ I_{k}^{w}} \int_{ I_{\mathbf{m}}^{w}}  2^{l-1}\left(\left|x-\dfrac{k}{w}\right|^{l}+\left|\dfrac{k}{w}-u\right|^{l}\right) \prod_{i=1}^{d} 2^{j_{i}-1}\left(\left|y_{i}-\dfrac{m_{i}}{w}\right|^{j_{i}}+\left|\dfrac{m_{i}}{w}-v_{i}\right|^{j_{i}}\right) du \ d\bold{v} 
    \end{eqnarray*}
     \begin{eqnarray*}
      &&\leq \displaystyle\sum\sum_{k+|\mathbf{m}| > N}  |\phi(wx - k)| \prod_{i=1}^{d} |\psi_{i}(wy_{i}-m_{i})| w^{d+1} \sum_{l+|\bold{j}|\leq n}\dfrac{1}{l! \bold{j}!}  \| \partial^{l+\bold{j} }f\|_{\infty}\\ 
       && \times 2^{l-1}\left(\left|x-\dfrac{k}{w}\right|^{l} \dfrac{1}{w}+\dfrac{1}{w^{l+1}}\right) \prod_{i=1}^{d} 2^{j_{i}-1}\left(\left|y_{i}-\dfrac{m_{i}}{w} \right|^{j_{i}}\dfrac{1}{w}+\dfrac{1}{w^{j_{i}+1 }}\right) \\ 
     &&=  \sum_{l+|\bold{j}|\leq n}\dfrac{1}{l! \bold{j}!} \dfrac{2^{l+|\mathbf{j}|-(d+1)}}{w^{l+|\mathbf{j}|}} \| \partial^{l+\bold{j} }f\|_{\infty}\\
      && \times\sum\sum_{k+|\mathbf{m}| > N}  |\phi(wx - k)| \prod_{i=1}^{d} |\psi_{i}(wy_{i}-m_{i})|  \left(\left|wx-k\right|^{l}+1\right) \prod_{i=1}^{d}\left(\left|wy_{i}-m_{i}\right|^{j_{i}}+1\right) \\
      &&\leq \epsilon  \sum_{l+|\bold{j}|\leq n}\dfrac{1}{l! \bold{j}!} \dfrac{2^{l+|\mathbf{j}|-(d+1)}}{w^{l+|\mathbf{j}|}} \| \partial^{l+\bold{j} }f\|_{\infty}.
\end{eqnarray*}
Since $\epsilon>0$ is arbitrary, we have $(S_{N,w}^{\phi,\psi}f)$ converges uniformly to $K_{n,w}^{\phi,\psi}f.$
 Hence, $K_{n,w}^{\phi,\psi}f$ is continuous on $\mathbb{R} \times \mathbb{R}^{d}.$ 
We note that $(S_{N,w}^{\phi,\psi}f)^{'} $ is a linear combination of functions of type $(u-x)^{r},(y_{i}-v_{i})^{r},\phi^{(q)}(wu-k),\psi_{i}^{(q)}(wu-k),i=1,...,d,r=0,1,...,n,q=0,1.$ Using (\ref{eq:1}) and repeating the same approach, we easily obtain $(S_{N,w}^{\phi,\psi}f)^{'} $ is uniformly convergent. Thus, we get $K_{n,w}^{\phi,\psi}f\in  C_{b}^{'}(\mathbb{R} \times \mathbb{R}^{d}). $ 
Iterating the above arguments, we have $K_{n,w}^{\phi,\psi}f\in C_{b}^{p}(\mathbb{R} \times \mathbb{R}^{d}).$
This completes the proof.
 \end{proof}

Next, we prove the following uniform convergence theorem for the Hermite-type Kantorovich sampling operators. 

\begin{thm}\label{thm1}
  Let $\phi$ and $\psi_{i}, \ i=1,...,d$ be a kernel of order $n.$ Suppose that $f \in C_{b}^{n}(\mathbb{R} \times \mathbb{R}^{d}).$ Then, we have $$ \lim_{w \to \infty} \|K_{n,w}^{\phi,\psi}f-f\|_{\infty}=0.$$
\end{thm}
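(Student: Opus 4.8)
The plan is to combine the partition-of-unity property of the kernels with a multivariate Taylor expansion and then bound the resulting remainder by the modulus of continuity of the top-order partial derivatives of $f$ (the absolute convergence needed for all the rearrangements below being guaranteed by the estimate recorded after the Definition). Since $\phi$ and each $\psi_i$ are kernels of order $n$, property (i) gives $\sum_{k\in\mathbb{Z}}\phi(wx-k)=1$ and $\sum_{m_i\in\mathbb{Z}}\psi_i(wy_i-m_i)=1$, whence $\sum_{k\in\mathbb{Z}}\sum_{\mathbf{m}\in\mathbb{Z}^d}\phi(wx-k)\prod_{i=1}^d\psi_i(wy_i-m_i)=1$; since also $w^{d+1}$ times the measure of $I_k^w\times I_{\mathbf{m}}^w$ equals $1$, I would first rewrite $f(x,\mathbf{y})$ as that same double sum with the bracketed integrand replaced by $w^{d+1}\int_{I_k^w}\int_{I_{\mathbf{m}}^w}f(x,\mathbf{y})\,du\,d\mathbf{v}$. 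Subtracting from the definition of $K_{n,w}^{\phi,\psi}f$, the bracket of $K_{n,w}^{\phi,\psi}f(x,\mathbf{y})-f(x,\mathbf{y})$ becomes $w^{d+1}\int_{I_k^w}\int_{I_{\mathbf{m}}^w}\big(T_nf(x,\mathbf{y};u,\mathbf{v})-f(x,\mathbf{y})\big)\,du\,d\mathbf{v}$, where $T_nf(x,\mathbf{y};u,\mathbf{v}):=\sum_{l+|\mathbf{j}|\le n}\tfrac{1}{l!\,\mathbf{j}!}\,\partial^{l+|\mathbf{j}|}f(u,\mathbf{v})\,(x-u)^l(\mathbf{y}-\mathbf{v})^{\mathbf{j}}$ is exactly the degree-$n$ Taylor polynomial of $f$, regarded as a function of the $d+1$ variables, centred at $(u,\mathbf{v})$ and evaluated at $(x,\mathbf{y})$; thus $T_nf(x,\mathbf{y};u,\mathbf{v})-f(x,\mathbf{y})$ is minus the Taylor remainder $R_n(x,\mathbf{y};u,\mathbf{v})$.

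Next I would estimate $R_n$. Writing it in integral (Lagrange/Peano) form and using that $f\in C_b^n$, so the partials $\partial^{l+|\mathbf{j}|}f$ with $l+|\mathbf{j}|=n$ are bounded and \emph{uniformly} continuous, one gets a bound of the shape $|R_n(x,\mathbf{y};u,\mathbf{v})|\le C_n\,\delta^n\,\omega_n(\delta)$ with $\delta:=|x-u|+\sum_{i=1}^d|y_i-v_i|$ and $\omega_n(\delta):=\max_{l+|\mathbf{j}|=n}\sup_{|h|+|\mathbf{h}|\le\delta}\|\partial^{l+|\mathbf{j}|}f(\cdot+(h,\mathbf{h}))-\partial^{l+|\mathbf{j}|}f\|_\infty$, where $\omega_n(\delta)\to0$ as $\delta\to0^+$ and $\omega_n\le2\max_{l+|\mathbf{j}|=n}\|\partial^{l+|\mathbf{j}|}f\|_\infty$. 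For the "far" terms below I would instead use the cruder bound $|R_n|\le\|f\|_\infty+\sum_{l+|\mathbf{j}|\le n}\tfrac{1}{l!\,\mathbf{j}!}\|\partial^{l+|\mathbf{j}|}f\|_\infty\,|x-u|^l\prod_{i=1}^d|y_i-v_i|^{j_i}$.

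Then, on $I_k^w\times I_{\mathbf{m}}^w$ one has $|x-u|\le\tfrac1w(|wx-k|+1)$ and $|y_i-v_i|\le\tfrac1w(|wy_i-m_i|+1)$, so I would split the sum over $(k,\mathbf{m})$ into the near part $\{(k,\mathbf{m}):|wx-k|\le\sqrt w\ \text{and}\ |wy_i-m_i|\le\sqrt w\ \text{for all }i\}$ and its complement. On the near part $\delta\le(d+1)(\sqrt w+1)/w\to0$, so $\omega_n(\delta)\le\omega_n\big((d+1)(\sqrt w+1)/w\big)$; expanding $\delta^n$ by the multinomial theorem and summing $|\phi(wx-k)|\prod_i|\psi_i(wy_i-m_i)|$ against the powers $|wx-k|^l\prod_i|wy_i-m_i|^{j_i}$ produces only factors $M_l(\phi)$, $M_{j_i}(\psi_i)$ (finite by Remark \ref{rmk1}) and powers of $1/w$, giving a near-part bound $\le C\,\omega_n\big((d+1)(\sqrt w+1)/w\big)\to0$. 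On the complementary far part at least one index exceeds $\sqrt w$; using the crude bound for $R_n$ and decomposing that region according to which coordinate is large, every term is a tail sum such as $\sum_{|wx-k|\ge\sqrt w}|wx-k|^n|\phi(wx-k)|$ — which tends to $0$ uniformly in $x$ by the second assertion of Remark \ref{rmk1} — multiplied by bounded moment factors in the other $d$ variables, so the far part also tends to $0$. Letting $w\to\infty$ yields $\|K_{n,w}^{\phi,\psi}f-f\|_\infty\to0$.

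The step I expect to be the main obstacle is the uniform control of the tail of the $(d+1)$-fold sum: one must decompose the far region so that in each piece a single small tail sum from Remark \ref{rmk1} is multiplied only by \emph{bounded} absolute-moment factors coming from the remaining variables, all uniformly in $(x,\mathbf{y})$, while simultaneously keeping the Taylor remainder uniformly bounded — which is precisely where the hypothesis $f\in C_b^n$ (boundedness and uniform continuity of the $n$-th order partials) is used.
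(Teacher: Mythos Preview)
Your reduction via partition of unity and Taylor expansion to a remainder involving only the $n$-th order partials is exactly the paper's starting point. The gap is in the near-part estimate: in this paper $C_b^n(\mathbb R\times\mathbb R^d)$ means the partials up to order $n$ are continuous and \emph{bounded}, not uniformly continuous (the latter is the paper's $C^n$; see the Preliminaries). So your assertion that ``the partials \ldots\ are bounded and uniformly continuous'' and hence $\omega_n(\delta)\to 0$ is not justified by the hypothesis of Theorem~\ref{thm1}, and the conclusion ``near-part $\le C\,\omega_n\big((d{+}1)(\sqrt w{+}1)/w\big)\to 0$'' does not follow as written.

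The repair is already implicit in your own bookkeeping: expanding $\delta^n$ and summing against the kernels produces, as you note, a factor $w^{-n}$ times finite absolute moments, so the near part is really bounded by $C\,w^{-n}\omega_n(\cdot)\le 2C\,w^{-n}\max_{l+|\mathbf j|=n}\|\partial^{l+\mathbf j}f\|_\infty\to 0$, using only boundedness. But once one uses only $\omega_n\le 2\max\|\partial^{l+\mathbf j}f\|_\infty$, the near/far splitting and the tail sums become unnecessary. This is precisely the paper's route: from the same Lagrange-remainder identity it bounds $|\partial^{l+\mathbf j}f(u,\mathbf v)-\partial^{l+\mathbf j}f(\zeta_{u,x},\zeta_{\mathbf v,\mathbf y})|\le 2\|\partial^{l+\mathbf j}f\|_\infty$ \emph{globally}, splits $|x-u|\le|x-k/w|+|k/w-u|$ (and similarly in $\mathbf y$), and sums over all $(k,\mathbf m)$ to obtain directly
\[
\|K_{n,w}^{\phi,\psi}f-f\|_\infty\le\frac{2^{n-d}}{w^{n}}\sum_{l+|\mathbf j|=n}\frac{1}{l!\,\mathbf j!}\,\|\partial^{l+\mathbf j}f\|_\infty\,(M_l(\phi)+M_0(\phi))\prod_{i=1}^d(M_{j_i}(\psi_i)+M_0(\psi_i)),
\]
with no splitting and no appeal to Remark~\ref{rmk1}. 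Your $\omega_n$-based near/far argument is essentially the proof of the \emph{stronger} conclusion $\|K_{n,w}^{\phi,\psi}f-f\|_\infty=o(w^{-n})$ in Theorem~\ref{thm2}(i), which genuinely requires $f\in C^n$.
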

\begin{proof}
By the definition of the Hermite-type sampling Kantorovich operator, we have 
\begin{eqnarray*}  
  && |K_{n,w}^{\phi,\psi}f(x,\mathbf{y}) - f(x,\mathbf{y})|\\
     &&\leq \sum_{k \in \mathbb{Z}} \sum_{\mathbf{m} \in\mathbb{Z}^d} |\phi(wx - k)| \prod_{i=1}^{d} |\psi_{i}(wy_{i}-m_{i})| \\
         &&\times w^{d+1}  \int_{ I_{k}^{w}} \int_{ I_{\mathbf{m}}^{w}}\left|\sum_{l + |\mathbf{j}| \leq n} \dfrac{1}{l! \mathbf{j}!}  
      \dfrac{\partial^{l + |\mathbf{j}|} f(u, \mathbf{v})}{\partial u^l \partial \mathbf{v}^{\mathbf{j}}} (x - u)^l (\mathbf{y} - \mathbf{v})^{\mathbf{j}} - f(x, \mathbf{y}) \right| \, du \, d\mathbf{v}.
   \end{eqnarray*}
By Taylor's formula, we have
\begin{eqnarray*}
    f(x,\bold{y})&&=\displaystyle\sum_{l+|\bold{j}|\leq n-1}\dfrac{1}{l! \bold{j}!}\dfrac{\partial^{l+|\bold{j} |}f(u,\bold{v})}{\partial u^{l}\partial \bold{v}^{\bold{j}}}{(x-u)}^{l}{(\bold{y}-\bold{v})^{\bold{j}}} \\
     && \quad ~~~~~~~~~~~ + \sum_{l+|\bold{j}|= n}\dfrac{1}{l! \bold{j}!}\dfrac{\partial^{l+|\bold{j} |}f(\zeta_{u,x}, \ \zeta_{\bold{v},\bold{y}})}{\partial u^{l}\partial \bold{v}^{\bold{j}}}{(x-u)}^{l}{(\bold{y}-\bold{v})^{\bold{j}}}, 
        \end{eqnarray*}
where $\zeta_{u,x}$ lies between $u$ and $x,$ and  $\zeta_{v_{i},y_{i}} $ lies between $v_{i}$ and $y_{i}$ for $i=1,...,d.$ Thus, we can write 
 \myeq{
   & |K_{n,w}^{\phi,\psi}f(x,\mathbf{y}) - f(x,\mathbf{y})| \\
   &\leq \sum_{k \in \mathbb{Z}} \sum_{\mathbf{m} \in\mathbb{Z}^d} |\phi(wx - k)| \prod_{i=1}^{d} |\psi_{i}(wy_{i}-m_{i})| w^{d+1} \int_{ I_{k}^{w}} \int_{ I_{\mathbf{m}}^{w}} \\ 
    & \times   \left| \sum_{l + |\mathbf{j}| = n} \dfrac{1}{l! \mathbf{j}!} \left( \dfrac{\partial^{l + |\mathbf{j}|} f(u, \mathbf{v})}{\partial u^l \partial \mathbf{v}^{\mathbf{j}}}  -  \dfrac{\partial^{l + |\mathbf{j}|} f(\zeta_{u,x}, \zeta_{\mathbf{v},\mathbf{y}})}{\partial u^l \partial \mathbf{v}^{\mathbf{j}}} \right) (x - u)^l (\mathbf{y} - \mathbf{v})^{\mathbf{j}} \right| \, du \, d\mathbf{v}.
     }
Then, by applying Jensen's inequality, we obtain 
\begin{eqnarray*}
&&|K_{n,w}^{\phi,\psi}f(x,\bold{y})-f(x,\bold{y})|\\
 &&\leq\displaystyle\sum_{k\in \mathbb{Z}}\sum_{\mathbf{m} \in\mathbb{Z}^{d}}  |\phi(wx - k)| \prod_{i=1}^{d} |\psi_{i}(wy_{i}-m_{i})| w^{d+1}  \sum_{l+|\bold{j}|= n}\dfrac{1}{l! \bold{j}!} 2 \| \partial^{l+\bold{j} }f\|_{\infty}  \\
 && \times \int_{ I_{k}^{w}} \int_{ I_{\mathbf{m}}^{w}}  2^{l-1}\left(\left|x-\dfrac{k}{w}\right|^{l}+\left|\dfrac{k}{w}-u\right|^{l}\right) \prod_{i=1}^{d} 2^{j_{i}-1}\left(\left|y_{i}-\dfrac{m_{i}}{w}\right|^{j_{i}}+\left|\dfrac{m_{i}}{w}-v_{i}\right|^{j_{i}}\right) du \ d\bold{v} \\
     &&\leq\displaystyle\sum_{k\in \mathbb{Z}}\sum_{\mathbf{m} \in\mathbb{Z}^{d}}  |\phi(wx - k)| \prod_{i=1}^{d} |\psi_{i}(wy_{i}-m_{i})| w^{d+1} \sum_{l+|\bold{j}|= n}\dfrac{1}{l! \bold{j}!} 2 \| \partial^{l+\bold{j} }f\|_{\infty} \\
      && \times 2^{l-1}\left(\left|x-\dfrac{k}{w}\right|^{l} \dfrac{1}{w}+\dfrac{1}{w^{l+1}}\right) \prod_{i=1}^{d} 2^{j_{i}-1}\left(\left|y_{i}-\dfrac{m_{i}}{w} \right|^{j_{i}}\dfrac{1}{w}+\dfrac{1}{w^{j_{i}+1 }}\right)  \\
       &&\leq \displaystyle \sum_{l+|\bold{j}|= n}\dfrac{1}{l! \bold{j}!}  \dfrac{2^{l+|j|-d}}{w^{l+|j|}} \| \partial^{l+\bold{j} }f\|_{\infty} (M_{l}(\phi)+M_{0}(\phi)) \prod_{i=1}^{d}  (M_{j_{i}}(\psi_{i})+M_{0}(\psi_{i}))\\
 &&= \dfrac{2^{n-d}}{w^{n}}\displaystyle \sum_{l+|\bold{j}|= n}\dfrac{1}{l! \bold{j}!}   \| \partial^{l+\bold{j} }f\|_{\infty} (M_{l}(\phi)+M_{0}(\phi)) \prod_{i=1}^{d}  (M_{j_{i}}(\psi_{i})+M_{0}(\psi_{i})).
\end{eqnarray*}
Since $f \in C_{b}^{n}(\mathbb{R} \times \mathbb{R}^{d})$ and $M_{l}(\phi), M_{j_{i}}(\psi_{i}), \ i=1,...,d$ are finite, then we get $$ \lim_{w \to \infty} \|K_{n,w}^{\phi,\psi}f-f\|_{\infty}=0.$$ 
     Hence, the proof is completed. 
\end{proof}
\subsection{Rate of Convergence}
In this section, we estimate the rate of convergence of the Hermite sampling Kantorovich operators in terms of the modulus of continuity. First, we define the modulus of continuity for $f \in C(\mathbb{R} \times \mathbb{R}^{d}).$  

\begin{definition}
For $f \in C(\mathbb{R} \times \mathbb{R}^{d}),$ the modulus of continuity is defined as follows:
$$ \omega(f,\delta,\gamma_{1},...,\gamma_{d}):= \sup_{\substack{|x-t|< \delta \\ |y_{i}-s_{i}|< \gamma_{i} \\ i=1,...,d}} |f(x,\bold{y})-f(t, \bold{s})|,  \ \ \ (x,\bold{y}), (t,\bold{s}) \in \mathbb{R} \times \mathbb{R}^{d},$$
where $\delta, \gamma_{i}>0,$ for $i=1,...,d.$
\end{definition}
 We note that $\omega(f,\delta,\gamma_{1},...,\gamma_{d})$  satisfies the following properties:
\begin{itemize}
    \item [(i)]
    $\omega(f,\delta,\gamma_{1},...,\gamma_{d}) \to 0$ as $\delta, \gamma_{i} \to 0 , \ \text{for} \  i=1,...,d.$
    \item[(ii)]
    $ |f(x,\bold{y})-f(t, \bold{s})|\leq \omega(f,\delta,\gamma_{1},...,\gamma_{d}) \left(1+\dfrac{|x-t|}{\delta}\right)\displaystyle \prod_{i=1}^{d}\left(1+\dfrac{|y_{i}-s_{i}|}{\gamma_{i}}\right).$
    \end{itemize}
    
\begin{thm} \label{thm2}
   Let $\phi$ and $\psi_{i}, \ i=1,...,d$ be a kernel of order $n.$  
   \begin{itemize}
       \item [(i)] If $f \in C^{n}(\mathbb{R} \times \mathbb{R}^{d}),$ then we have 
       $$\|K_{n,w}^{\phi,\psi}f-f\|_{\infty}= o(w^{-n}) \ \ \text{as}  \ w \to \infty.$$
        \item[(ii)]
       Suppose that $f \in C^{n}(\mathbb{R} \times \mathbb{R}^{d})\ \text{and} \ M_{n+1}(\phi), M_{n+1}(\psi_{i})< \infty , \ i=1,...,d.$ Then, we have  
      \begin{align*}
         & \|K_{n,w}^{\phi,\psi}f-f\|_{\infty}\\
         &\leq\dfrac{2^{n-(d+1)}}{w^{n}} \sum_{l + |\mathbf{j}| = n} \dfrac{1}{l! \mathbf{j}!} \omega\left(\partial^{l+\mathbf{j}}f,\dfrac{1}{w},\dfrac{1}{w},...,\dfrac{1}{w}\right) \left(M_{l+1}(\phi)+2M_{l}(\phi)\right. \\ &
         +\left. M_{1}(\phi)+2 M_{0}(\phi)\right) \prod_{i=1}^{d} \left(M_{j_{i}+1}(\psi_{i})+2M_{j_{i}}(\psi_{i})+
           M_{1}(\psi_{i}) + 2M_{0}(\psi_{i})\right).
      \end{align*}
 \item[(iii)] If $f \in C^{n+1}_{b}(\mathbb{R} \times \mathbb{R}^{d})\ \text{and} \ M_{n+1}(\phi),\ M_{n+1}(\psi_{i})< \infty , \ i=1,...,d,$ then we have  
\begin{eqnarray*}
     &&\|K_{n,w}^{\phi,\psi}f-f\|_{\infty}\\&&\leq
     \dfrac{2^{n-d}}{w^{n+1}}\displaystyle \sum_{l+|\bold{j}|= n+1}\dfrac{1}{l! \bold{j}!}   \| \partial^{l+\bold{j} }f\|_{\infty} (M_{l}(\phi)+M_{0}(\phi)) \prod_{i=1}^{d}  (M_{j_{i}}(\psi_{i})+M_{0}(\psi_{i})).
    \end{eqnarray*}
   \end{itemize}
\end{thm}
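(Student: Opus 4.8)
The three estimates all flow from a single device --- the same one underlying the proof of Theorem~\ref{thm1} --- namely Taylor's formula combined with the normalizations $\sum_{k\in\mathbb{Z}}\phi(wx-k)=1$ and $\sum_{m_i\in\mathbb{Z}}\psi_i(wy_i-m_i)=1$. The plan is to expand $f(x,\mathbf{y})$ by Taylor's theorem about the running point $(u,\mathbf{v})\in I_k^w\times I_{\mathbf{m}}^w$ --- to order $n$ for parts (i) and (ii), to order $n+1$ for part (iii) --- so that the inner finite sum defining $K_{n,w}^{\phi,\psi}f(x,\mathbf{y})$ equals $f(x,\mathbf{y})$ minus the top-order Taylor remainder, whose derivatives are evaluated at intermediate points $\zeta_{u,x}$ (between $u$ and $x$) and $\zeta_{v_i,y_i}$ (between $v_i$ and $y_i$). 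Since $w^{d+1}\int_{I_k^w}\int_{I_{\mathbf{m}}^w}f(x,\mathbf{y})\,du\,d\mathbf{v}=f(x,\mathbf{y})$, summing this constant against the kernels and invoking the partition of unity annihilates the free term, so $K_{n,w}^{\phi,\psi}f-f$ is represented as $(-1)$ times the kernel-averaged top-order Taylor remainder. From there the proofs differ only in how that remainder is bounded, and in every case the bound is extracted by the same elementary chain already used in Theorem~\ref{thm1}: split $|x-u|^l\le 2^{l-1}(|x-k/w|^l+|k/w-u|^l)$ (and similarly in each $y_i$), integrate over $I_k^w\times I_{\mathbf{m}}^w$ (the integrand factorizes over the $d+1$ variables), multiply by $w^{d+1}$, rewrite $|x-k/w|=|wx-k|/w$ to extract the powers of $w$ and $2$, and finally sum over $k$ and $\mathbf{m}$ against $|\phi|$ and $|\psi_i|$ using the definitions of the absolute moments $M_\alpha$.

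For part (iii), where $f\in C_b^{n+1}(\mathbb{R}\times\mathbb{R}^{d})$, the order-$(n+1)$ remainder is bounded crudely by $|\partial^{l+\mathbf{j}}f(\zeta_{u,x},\zeta_{\mathbf{v},\mathbf{y}})|\le\|\partial^{l+\mathbf{j}}f\|_\infty$, and running the chain above with $l+|\mathbf{j}|=n+1$ reproduces verbatim the computation in Theorem~\ref{thm1} (now one order higher), giving exactly the stated bound with prefactor $2^{n-d}w^{-(n+1)}$. The hypothesis $M_{n+1}(\phi),M_{n+1}(\psi_i)<\infty$ is used precisely here: $l$ and the $j_i$ now range up to $n+1$, one beyond what a kernel of order $n$ guarantees through Remark~\ref{rmk1}.

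For part (ii), the remainder is the difference $\partial^{l+\mathbf{j}}f(u,\mathbf{v})-\partial^{l+\mathbf{j}}f(\zeta_{u,x},\zeta_{\mathbf{v},\mathbf{y}})$, which I would estimate by the second listed property of the modulus of continuity with $\delta=\gamma_1=\dots=\gamma_d=1/w$: since $|u-\zeta_{u,x}|\le|u-x|$ and $w|u-x|\le 1+|wx-k|$ on $I_k^w$ (likewise in each $y_i$), one gets $|\partial^{l+\mathbf{j}}f(u,\mathbf{v})-\partial^{l+\mathbf{j}}f(\zeta_{u,x},\zeta_{\mathbf{v},\mathbf{y}})|\le\omega(\partial^{l+\mathbf{j}}f,\frac{1}{w},\dots,\frac{1}{w})\,(2+|wx-k|)\prod_{i=1}^{d}(2+|wy_i-m_i|)$, a factor constant in $(u,\mathbf{v})$. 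Carrying out the same integration and summation, the extra factor is absorbed via the identity $(2+|wx-k|)(|wx-k|^l+1)=|wx-k|^{l+1}+2|wx-k|^l+|wx-k|+2$, so the sum over $k$ against $|\phi|$ produces $M_{l+1}(\phi)+2M_l(\phi)+M_1(\phi)+2M_0(\phi)$ and that over $m_i$ against $|\psi_i|$ produces $M_{j_i+1}(\psi_i)+2M_{j_i}(\psi_i)+M_1(\psi_i)+2M_0(\psi_i)$; the powers of $2$ and $w$ combine to $2^{n-(d+1)}w^{-n}$, which is the claim. Here $M_{n+1}$ finiteness enters through $M_{l+1}(\phi)$ and $M_{j_i+1}(\psi_i)$ with $l,j_i\le n$.

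Part (i) is then immediate from (ii), since $\omega(\partial^{l+\mathbf{j}}f,\frac{1}{w},\dots,\frac{1}{w})\to0$ as $w\to\infty$; to obtain it under the weaker hypothesis actually stated (no control on $M_{n+1}$) I would instead truncate the double index sum at a radius $R=R(w)\to\infty$ with $R/w\to0$ --- for instance $R=\sqrt{w}$ --- and estimate two blocks separately. On $\max(|wx-k|,|wy_i-m_i|)\le R$ one has $|\zeta_{u,x}-u|\le|x-u|\le(R+1)/w$, so the remainder is at most $\omega(\partial^{l+\mathbf{j}}f,(R+1)/w,\dots,(R+1)/w)\to0$, while the surviving weighted kernel sums stay bounded by $M_l(\phi)+M_0(\phi)$ and $M_{j_i}(\psi_i)+M_0(\psi_i)$; on the complementary block the crude bound $2\|\partial^{l+\mathbf{j}}f\|_\infty$ together with the tail estimate of Remark~\ref{rmk1} (valid since the weight $|wx-k|^l$ has $l\le n$, and it also controls $\sum_{|wx-k|>R}|\phi(wx-k)|$) forces that block to zero. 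Multiplying through by $w^n$, both blocks vanish as $w\to\infty$, giving $\|K_{n,w}^{\phi,\psi}f-f\|_\infty=o(w^{-n})$. The only genuine bookkeeping obstacle I anticipate is keeping the exact powers of $2$ and $w$ straight while handling the cross factor $(2+|wx-k|)\prod_i(2+|wy_i-m_i|)$ in parts (ii)--(iii), and, in part (i), choosing $R(w)$ so that the interior modulus and the exterior weighted tail tend to zero simultaneously --- essentially balancing uniform continuity against moment decay; beyond that, every step duplicates a computation already present in Proposition~\ref{prop1} and Theorem~\ref{thm1}.
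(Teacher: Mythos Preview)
Your proposal is correct and follows essentially the same route as the paper: Taylor expansion of $f(x,\mathbf{y})$ about $(u,\mathbf{v})$ combined with the partition of unity, the splitting $|x-u|^l\le 2^{l-1}(|x-k/w|^l+|k/w-u|^l)$, and the absolute-moment bounds. The only cosmetic differences are that in part~(ii) the paper keeps the factor $(1+|u-x|/\delta)$ inside the integral and specializes $\delta=1/w$ at the end (arriving at the identical six-term moment combination you obtain from $(2+|wx-k|)(|wx-k|^l+1)$), and in part~(i) the paper uses the $\epsilon$--$\delta$ formulation of uniform continuity with a fixed $\delta_{l,\mathbf{j}}$ (so the near/far split occurs at radius $w\delta_{l,\mathbf{j}}/2\to\infty$) rather than your diagonal choice $R(w)=\sqrt{w}$; both parametrize the same near/far decomposition and yield the same conclusion.
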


\begin{proof}
   First, we prove part (i). From (\ref{eq:2}), we can write
\begin{eqnarray*}
    && |K_{n,w}^{\phi,\psi}f(x,\mathbf{y}) - f(x,\mathbf{y})| \\
   && \leq \sum_{k \in \mathbb{Z}} \sum_{\mathbf{m} \in\mathbb{Z}^d} |\phi(wx - k)| \prod_{i=1}^{d} |\psi_{i}(wy_{i}-m_{i})| w^{d+1}  \\
    &&  \times \int_{ I_{k}^{w}} \int_{ I_{\mathbf{m}}^{w}}   \left| \sum_{l + |\mathbf{j}| = n} \dfrac{1}{l! \mathbf{j}!} \left( \dfrac{\partial^{l + |\mathbf{j}|} f(u, \mathbf{v})}{\partial u^l \partial \mathbf{v}^{\mathbf{j}}}- \dfrac{\partial^{l + |\mathbf{j}|} f(\zeta_{u,x}, \zeta_{\mathbf{v},\mathbf{y}})}{\partial u^l \partial \mathbf{v}^{\mathbf{j}}} \right) (x-u)^l (\mathbf{y}-\mathbf{v})^{\mathbf{j}} \right| \, du \, d\mathbf{v} \\
     &&\leq\displaystyle\sum_{k\in \mathbb{Z}}\sum_{\mathbf{m} \in\mathbb{Z}^{d}}|\phi(wx - k)| \prod_{i=1}^{d} |\psi_{i}(wy_{i}-m_{i})| w^{d+1}  \sum_{l+|\bold{j}|= n}\dfrac{1}{l! \bold{j}!}\\
     &&\times \int_{ I_{k}^{w}} \int_{ I_{\mathbf{m}}^{w}}\left| \dfrac{\partial^{l + |\mathbf{j}|} f(u, \mathbf{v})}{\partial u^l \partial \mathbf{v}^{\mathbf{j}}}-\dfrac{\partial^{l + |\mathbf{j}|} f(\zeta_{u,x}, \zeta_{\mathbf{v},\mathbf{y}})}{\partial u^l \partial \mathbf{v}^{\mathbf{j}}}\right|  2^{l-1}  \left(\left|x-\dfrac{k}{w}\right|^{l}+\left|\dfrac{k}{w}-u\right|^{l}\right) \\
      && \times \prod_{i=1}^{d} 2^{j_{i}-1}\left(\left|y_{i}-\dfrac{m_{i}}{w}\right|^{j_{i}}+\left|\dfrac{m_{i}}{w}-v_{i}\right|^{j_{i}}\right) du \ d\bold{v} \\
    \end{eqnarray*}
\begin{eqnarray*}
 && = \dfrac{2^{n-(d+1)}}{w^{n}}\displaystyle\sum_{k\in \mathbb{Z}}\sum_{\mathbf{m} \in\mathbb{Z}^{d}} |\phi(wx - k)| \prod_{i=1}^{d} |\psi_{i}(wy_{i}-m_{i})| w^{d+1} \sum_{l+|\bold{j}|= n}\dfrac{1}{l! \bold{j}!}\\
     && \times \int_{ I_{k}^{w}} \int_{ I_{\mathbf{m}}^{w}}  \left| \dfrac{\partial^{l + |\mathbf{j}|} f(u, \mathbf{v})}{\partial u^l \partial \mathbf{v}^{\mathbf{j}}}-\dfrac{\partial^{l + |\mathbf{j}|} f(\zeta_{u,x}, \zeta_{\mathbf{v},\mathbf{y}})}{\partial u^l \partial \mathbf{v}^{\mathbf{j}}}\right|  \left(\left|wx-k\right|^{l}+\left|k-wu\right|^{l}\right)   \\
     && \times\prod_{i=1}^{d} \left(\left|wy_{i}-m_{i}\right|^{j_{i}}+\left|m_{i}-wv_{i}\right|^{j_{i}}\right) du \ d\bold{v}. 
\end{eqnarray*}
Since $\partial^{l + \mathbf{j}}f$ is uniform continuous on $\mathbb{R}\times\mathbb{R}^{d},$ for a given $\epsilon >0, \  \exists \ \delta_{l,\bold{j}}>0$ such that
$$|\partial^{l + \mathbf{j}}f(u, \mathbf{v})-\partial^{l + \mathbf{j}}f(x, \mathbf{y})| < \epsilon,$$ whenever $\|(x,\bold{y})-(u,\bold{v})\|_{\infty}\leq \delta_{l,\bold{j}}.$ Thus, we obtain 
\begin{eqnarray*}
     &&w^{n}|K_{n,w}^{\phi,\psi}f(x,\mathbf{y}) - f(x,\mathbf{y})| \\
     && \leq 2^{n-(d+1)}\displaystyle\left(\mathop{\sum\sum}_{\substack{\left\|\left(x,\bold{y}\right)-\left(\dfrac{k}{w},\dfrac{\mathbf{m}}{w}\right)\right\|_{\infty}\leq \dfrac{\delta_{l,\bold{j}}}{2}}}+\mathop{\sum\sum}_{\substack{\left\|\left(x,\bold{y}\right)-\left(\dfrac{k}{w},\dfrac{\mathbf{m}}{w}\right)\right\|_{\infty}> \dfrac{\delta_{l,\bold{j}}}{2}}}\right)\\
     && \times |\phi(wx - k)| \prod_{i=1}^{d} |\psi_{i}(wy_{i}-m_{i})|  w^{d+1} \sum_{l+|\bold{j}|= n}\dfrac{1}{l! \bold{j}!} \int_{ I_{k}^{w}} \int_{ I_{\mathbf{m}}^{w}} \left| \dfrac{\partial^{l + |\mathbf{j}|} f(u, \mathbf{v})}{\partial u^l \partial \mathbf{v}^{\mathbf{j}}}-\dfrac{\partial^{l + |\mathbf{j}|} f(\zeta_{u,x}, \zeta_{\mathbf{v},\mathbf{y}})}{\partial u^l \partial \mathbf{v}^{\mathbf{j}}}\right| \\
      && \times \left(\left|wx-k\right|^{l}+\left|k-wu\right|^{l}\right) \prod_{i=1}^{d} \left(\left|wy_{i}-m_{i}\right|^{j_{i}}+\left|m_{i}-wv_{i}\right|^{j_{i}}\right) du \ d\bold{v}   \\
    &&:= S_{1}+S_{2}.
\end{eqnarray*}

We first estimate $S_{1}.$ We have
\begin{eqnarray*}
     \|(\zeta_{u,x}, \zeta_{\mathbf{v},\mathbf{y}})-(u,\bold{v})\|_{\infty} 
    && \leq\|(x,\bold{y})-(u,\bold{v})\|_{\infty} \\
    &&\leq\left\|\left(x,\bold{y}\right)-\left(\dfrac{k}{w},\dfrac{\mathbf{m}}{w}\right)\right\|_{\infty}+\left\|\left(u,\bold{v}\right)-\left(\dfrac{k}{w},\dfrac{\mathbf{m}}{w}\right)\right\|_{\infty} \\
    &&\leq \dfrac{\delta_{l,\bold{j}}}{2}+\dfrac{1}{w}\leq \delta_{l,\bold{j}}, 
\end{eqnarray*}
for sufficiently large $ w>0,$ with $\dfrac{1}{w}\leq \dfrac{\delta_{l\bold{j}}}{2}$ and for $\left\|\left(x,\bold{y}\right)-\left(\dfrac{k}{w},\dfrac{\mathbf{m}}{w}\right)\right\|_{\infty}\leq\dfrac{\delta_{l\bold{j}}}{2}.$
Thus, we have
$$S_{1}\leq \epsilon \ 2^{n-(d+1)} \displaystyle \sum_{l+|\bold{j}|= n}\dfrac{1}{l! \bold{j}!} (M_{l}(\phi)+M_{0}(\phi)) \prod_{i=1}^{d}  (M_{j_{i}}(\psi_{i})+M_{0}(\psi_{i})).$$ \\
Since $\epsilon>0$ is arbitrary, so we obtain  $S_{1}\to 0$ as $w\to \infty.$ Now $ S_{2}$ is estimated by 
\begin{eqnarray*}
     S_{2}&\leq&2^{n-(d+1)} \displaystyle \sum_{l+|\bold{j}|= n}\dfrac{1}{l! \bold{j}!}  2 \| \partial^{l+\bold{j} }f\|_{\infty} \mathop{\sum\sum}_{\substack{\left\|\left(x,\bold{y}\right)-\left(\dfrac{k}{w},\dfrac{\mathbf{m}}{w}\right)\right\|_{\infty}> \dfrac{\delta_{l,\bold{j}}}{2}}} \\
    & \times& |\phi(wx - k)|   \left(\left|wx-k\right|^{l}+1\right) \prod_{i=1}^{d} |\psi_{i}(wy_{i}-m_{i})|\left(\left|wy_{i}-m_{i}\right|^{j_{i}}+1\right). 
\end{eqnarray*}
Since $ \left\|\left(x,\bold{y}\right)-\left(\dfrac{k}{w},\dfrac{m}{w}\right)\right\|_{\infty}> \dfrac{\delta_{l,\bold{j}}}{2},$ we have
$\left|x-\dfrac{k}{w}\right| >\dfrac{\delta_{l,\bold{j}}}{2}$ or $\left|y_{i}-\dfrac{m_{i}}{w}\right| >\dfrac{\delta_{l,\bold{j}}}{2},$ \ for some \ $i.$ 
This implies that
$ \left|wx-k\right| >\dfrac{w\delta_{l,\bold{j}}}{2}$ or $\left|wy_{i}-m_{i}\right| >\dfrac{w\delta_{l,\bold{j}}}{2},$ \ for some \ $i.$ 
Now, we get 
\begin{eqnarray*}
     S_{2}&\leq&2^{n-(d+1)} \displaystyle \sum_{l+|\bold{j}|= n}\dfrac{1}{l! \bold{j}!}  2 \| \partial^{l+\bold{j} }f\|_{\infty} \left(\sum_{\left|wx-k\right|> \frac{w\delta_{l,\bold{j}}}{2}}\sum...\sum+\sum \sum_{\left|wy_{1}-m_{1}\right|> \frac{w\delta_{l,\bold{j}}}{2}}...\sum\right.\\
    & +&\left. ...+\sum\sum...\sum_{\left|wy_{d}-m_{d}\right|> \frac{w\delta_{l,\bold{j}}}{2}}\right) |\phi(wx - k)|   \left(\left|wx-k\right|^{l}+1\right)\\
    &\times&  \prod_{i=1}^{d} |\psi_{i}(wy_{i}-m_{i})|\left(\left|wy_{i}-m_{i}\right|^{j_{i}}+1\right). 
\end{eqnarray*}
Thus, in view of Remark \ref{rmk1}, we obtain  
$ S_{2} \to 0$ as $w \to \infty.$
Hence, we have
$$\|K_{n,w}^{\phi,\psi}f-f\|_{\infty}= o(w^{-n})\ \ \ \text{as}  \ w \to \infty.$$
Now, we prove part (ii). Again, using (\ref{eq:2}) and the property (ii) of the modulus of continuity, we can write 
\begin{eqnarray*}
     &&|K_{n,w}^{\phi,\psi}f(x,\mathbf{y}) - f(x,\mathbf{y})| \\
   && \leq \sum_{k \in \mathbb{Z}} \sum_{\mathbf{m} \in\mathbb{Z}^d}|\phi(wx - k)| \prod_{i=1}^{d} |\psi_{i}(wy_{i}-m_{i})| w^{d+1} \int_{ I_{k}^{w}} \int_{ I_{\mathbf{m}}^{w}}  \sum_{l + |\mathbf{j}| = n} \dfrac{1}{l! \mathbf{j}!} \omega(\partial^{l+\mathbf{j}}f,\delta,\gamma_{1},...,\gamma_{d}) \\
   && \times   \left(1+\dfrac{|u-\zeta_{u,x}|}{\delta}\right)\displaystyle \prod_{i=1}^{d}\left(1+\dfrac{|v_{i}-\zeta_{{v}_{i},{y}_{i}}|}{\gamma_{i}}\right)|x-u|^l |\mathbf{y}-\mathbf{v}|^{\mathbf{j}}  \, du \, d\mathbf{v} \\
   && \leq \sum_{l + |\mathbf{j}| = n} \dfrac{1}{l! \mathbf{j}!} \omega(\partial^{l+\mathbf{j}}f,\delta,\gamma_{1},...,\gamma_{d})
   \sum_{k \in \mathbb{Z}} \sum_{\mathbf{m} \in\mathbb{Z}^d}|\phi(wx - k)| \prod_{i=1}^{d} |\psi_{i}(wy_{i}-m_{i})| w^{d+1} \\
   && \times \int_{ I_{k}^{w}} \int_{ I_{\mathbf{m}}^{w}}  \left(1+\dfrac{|u-x|}{\delta}\right)\displaystyle \prod_{i=1}^{d}\left(1+\dfrac{|v_{i}-{y}_{i}|}{\gamma_{i}}\right)|x-u|^l |\mathbf{y}-\mathbf{v}|^{\mathbf{j}}  \, du \, d\mathbf{v}. 
\end{eqnarray*}
By using Jensen's inequality and moment conditions, we obtain
\begin{eqnarray*}
    &&|K_{n,w}^{\phi,\psi}f(x,\mathbf{y}) - f(x,\mathbf{y})| \\
   && \leq \dfrac{2^{n-(d+1)}}{w^{n}} \sum_{l + |\mathbf{j}| = n} \dfrac{1}{l! \mathbf{j}!} \omega(\partial^{l+\mathbf{j}}f,\delta,\gamma_{1},...,\gamma_{d}) \left(\dfrac{M_{l+1}(\phi)}{\delta w}+M_{l}(\phi)+
   \dfrac{M_{l}(\phi)}{\delta w}+\dfrac{M_{1}(\phi)}{\delta w}+ M_{0}(\phi)\right. \\
   &&  + \left. \dfrac{M_{0}(\phi)}{\delta w }  \right) \prod_{i=1}^{d} \left(\dfrac{M_{j_{i}+1}(\psi_{i})}{\gamma_{i} w}+M_{j_{i}}(\psi_{i})+
   \dfrac{M_{j_{i}}(\psi_{i})}{\gamma_{i} w}+\dfrac{M_{1}(\psi_{i})}{\gamma_{i} w}+\dfrac{M_{0}(\psi_{i})}{\gamma_{i} w } + M_{0}(\psi_{i})\right).
\end{eqnarray*}
Choosing $\delta=\dfrac{1}{w}$ and $\gamma_{i}=\dfrac{1}{w}, \  i=1,...,d,$ we get 
\begin{eqnarray*}
    &&|K_{n,w}^{\phi,\psi}f(x,\mathbf{y}) - f(x,\mathbf{y})| \\
   && \leq \dfrac{2^{n-(d+1)}}{w^{n}} \sum_{l + |\mathbf{j}| = n} \dfrac{1}{l! \mathbf{j}!} \omega\left(\partial^{l+\mathbf{j}}f,\dfrac{1}{w},\dfrac{1}{w},...,\dfrac{1}{w}\right) \left(M_{l+1}(\phi)+2M_{l}(\phi)+M_{1}(\phi)+2 M_{0}(\phi)\right) \\
   &&  \times \prod_{i=1}^{d} \left(M_{j_{i}+1}(\psi_{i})+2M_{j_{i}}(\psi_{i})+
   M_{1}(\psi_{i}) + 2M_{0}(\psi_{i})\right).
\end{eqnarray*}
Now, we discuss the proof of part (iii). Using Taylor's formula, we can write
\begin{eqnarray*}
     &&|K_{n,w}^{\phi,\psi}f(x,\mathbf{y}) - f(x,\mathbf{y})| \\
   && \leq \sum_{k \in \mathbb{Z}} \sum_{\mathbf{m} \in\mathbb{Z}^d} |\phi(wx - k)| \prod_{i=1}^{d} |\psi_{i}(wy_{i}-m_{i})| w^{d+1}    \\
    &&  \times \int_{ I_{k}^{w}} \int_{ I_{\mathbf{m}}^{w}}\sum_{l + |\mathbf{j}| = n+1} \dfrac{1}{l! \mathbf{j}!}  \left|\dfrac{\partial^{l + |\mathbf{j}|} f(\zeta_{u,x}, \zeta_{\mathbf{v},\mathbf{y}})}{\partial u^l \partial \mathbf{v}^{\mathbf{j}}} \right| |x-u|^l |\mathbf{y}-\mathbf{v}|^{\mathbf{j}}  \, du \, d\mathbf{v}\\
      && \leq \sum_{l + |\mathbf{j}| = n+1} \dfrac{1}{l! \mathbf{j}!} \| \partial^{l+\bold{j} }f\|_{\infty}  \sum_{k \in \mathbb{Z}} \sum_{\mathbf{m} \in\mathbb{Z}^d} |\phi(wx - k)| \prod_{i=1}^{d} |\psi_{i}(wy_{i}-m_{i})|  w^{d+1} \\
     && \times \int_{ I_{k}^{w}} \int_{ I_{\mathbf{m}}^{w}} 2^{l-1}  \left(\left|x-\dfrac{k}{w}\right|^{l}+\left|\dfrac{k}{w}-u\right|^{l}\right) \prod_{i=1}^{d} 2^{j_{i}-1}\left(\left|y_{i}-\dfrac{m_{i}}{w}\right|^{j_{i}}+\left|\dfrac{m_{i}}{w}-v_{i}\right|^{j_{i}}\right) du \ d\bold{v} \\
    && \leq \dfrac{2^{n-d}}{w^{n+1}}\displaystyle \sum_{l+|\bold{j}|= n+1}\dfrac{1}{l! \bold{j}!}   \| \partial^{l+\bold{j} }f\|_{\infty} (M_{l}(\phi)+M_{0}(\phi)) \prod_{i=1}^{d}  (M_{j_{i}}(\psi_{i})+M_{0}(\psi_{i})).
     \end{eqnarray*}
    This completes the proof. 
\end{proof}

In the following theorem, we prove the Voronovskaja-type asymptotic formula for the Kantorovich sampling operators $K_{n,w}^{\phi,\psi}$. 

\begin{thm}\label{thm3}
Let $\phi, \psi_{i},\ i=1,...,d $ be a kernel of order $n+1$ with constant discrete moments $m_{r}(\phi,u), m_{r}(\psi_{i},u), \ r=0,1,...,n+1,\ i=1,...,d$ for all $u\in \mathbb{R}$ and not null. Then for any $f\in C_{b}^{n+1}(\mathbb{R}\times \mathbb{R}^{d}),$ we have 
\begin{eqnarray*}
     &&  \displaystyle\lim_{w\to \infty} w^{n+1}(K_{n,w}^{\phi,\psi}f(x,\mathbf{y}) - f(x,\mathbf{y}))\\
    &&=\displaystyle \sum_{l + |\mathbf{j}| = n+1} \dfrac{(-1)^{n}}{l! \mathbf{j}!} \sum_{c=0}^{l} \sum_{\mathbf{d}=0}^{\mathbf{j}} \binom{l}{c}\binom{\mathbf{j}}{\mathbf{d}}\dfrac{m_{c}(\phi)}{l-c+1}\prod_{i=1}^{d} \dfrac{m_{d_{i}}(\psi_{i})}{j_{i}-{d_{i}+1}} \partial^{l + \mathbf{j}} f(x, \mathbf{y}),
\end{eqnarray*}
for every $(x,\mathbf{y})\in \mathbb{R}\times \mathbb{R}^{d}.$
\end{thm}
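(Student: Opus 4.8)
The plan is to reduce the whole difference $K_{n,w}^{\phi,\psi}f-f$ to a single weighted sum of $(n+1)$-st order Taylor remainders, and then to read off the limit by evaluating the associated ``Kantorovich moments''. The starting point is the observation that the inner bracket of $K_{n,w}^{\phi,\psi}f$ is exactly the Taylor polynomial of $f$ of degree $n$ about the running point $(u,\mathbf{v})$, evaluated at $(x,\mathbf{y})$. Since $f\in C_{b}^{n+1}(\mathbb{R}\times\mathbb{R}^{d})$, Taylor's formula with Lagrange remainder of order $n+1$ gives, for every $(u,\mathbf{v})$,
\begin{align*}
&\sum_{l+|\mathbf{j}|\leq n}\frac{1}{l!\,\mathbf{j}!}\,\frac{\partial^{l+|\mathbf{j}|}f(u,\mathbf{v})}{\partial u^{l}\partial\mathbf{v}^{\mathbf{j}}}(x-u)^{l}(\mathbf{y}-\mathbf{v})^{\mathbf{j}} \\
&\qquad =f(x,\mathbf{y})-\sum_{l+|\mathbf{j}|=n+1}\frac{1}{l!\,\mathbf{j}!}\,\frac{\partial^{l+|\mathbf{j}|}f(\zeta_{u,x},\zeta_{\mathbf{v},\mathbf{y}})}{\partial u^{l}\partial\mathbf{v}^{\mathbf{j}}}(x-u)^{l}(\mathbf{y}-\mathbf{v})^{\mathbf{j}},
\end{align*}
where $(\zeta_{u,x},\zeta_{\mathbf{v},\mathbf{y}})$ lies on the segment joining $(u,\mathbf{v})$ and $(x,\mathbf{y})$. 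Substituting this into the definition of $K_{n,w}^{\phi,\psi}f$ and using $\sum_{k}\phi(wx-k)=1$, $\sum_{m_{i}}\psi_{i}(wy_{i}-m_{i})=1$ and $w^{d+1}\int_{I_{k}^{w}}\int_{I_{\mathbf{m}}^{w}}du\,d\mathbf{v}=1$, the $f(x,\mathbf{y})$ term cancels, and we obtain
\begin{align*}
&K_{n,w}^{\phi,\psi}f(x,\mathbf{y})-f(x,\mathbf{y}) \\
&\quad =-\sum_{k\in\mathbb{Z}}\sum_{\mathbf{m}\in\mathbb{Z}^{d}}\phi(wx-k)\prod_{i=1}^{d}\psi_{i}(wy_{i}-m_{i})\,w^{d+1}\int_{I_{k}^{w}}\int_{I_{\mathbf{m}}^{w}} \\
&\qquad\qquad\times\sum_{l+|\mathbf{j}|=n+1}\frac{1}{l!\,\mathbf{j}!}\,\frac{\partial^{l+|\mathbf{j}|}f(\zeta_{u,x},\zeta_{\mathbf{v},\mathbf{y}})}{\partial u^{l}\partial\mathbf{v}^{\mathbf{j}}}(x-u)^{l}(\mathbf{y}-\mathbf{v})^{\mathbf{j}}\,du\,d\mathbf{v}.
\end{align*}

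Next I would write $\partial^{l+|\mathbf{j}|}f(\zeta_{u,x},\zeta_{\mathbf{v},\mathbf{y}})=\partial^{l+|\mathbf{j}|}f(x,\mathbf{y})+\rho_{l,\mathbf{j}}(u,\mathbf{v})$; since $\partial^{l+\mathbf{j}}f$ is uniformly continuous, $\rho_{l,\mathbf{j}}(u,\mathbf{v})\to0$ as $(u,\mathbf{v})\to(x,\mathbf{y})$ and $|\rho_{l,\mathbf{j}}|\leq2\|\partial^{l+\mathbf{j}}f\|_{\infty}$. The contribution of the constant part $\partial^{l+|\mathbf{j}|}f(x,\mathbf{y})$ is $-\sum_{l+|\mathbf{j}|=n+1}\frac{\partial^{l+\mathbf{j}}f(x,\mathbf{y})}{l!\,\mathbf{j}!}\,Q_{l,\mathbf{j}}(w)$, where
\begin{align*}
Q_{l,\mathbf{j}}(w):=\sum_{k\in\mathbb{Z}}\sum_{\mathbf{m}\in\mathbb{Z}^{d}}\phi(wx-k)\prod_{i=1}^{d}\psi_{i}(wy_{i}-m_{i})\,w^{d+1}\int_{I_{k}^{w}}\int_{I_{\mathbf{m}}^{w}}(x-u)^{l}(\mathbf{y}-\mathbf{v})^{\mathbf{j}}\,du\,d\mathbf{v}.
\end{align*}
This factorises over the $d+1$ coordinates, and in each one the change of variable $u=(k+t)/w$, $t\in[0,1]$, together with the binomial expansion of $(wx-k-t)^{l}$, yields
\begin{align*}
w\int_{I_{k}^{w}}(x-u)^{l}\,du &=\frac{1}{w^{l}}\sum_{c=0}^{l}\binom{l}{c}\frac{(-1)^{l-c}}{l-c+1}(wx-k)^{c}, \\
\sum_{k\in\mathbb{Z}}(wx-k)^{c}\phi(wx-k) &=(-1)^{c}m_{c}(\phi,wx)=(-1)^{c}m_{c}(\phi),
\end{align*}
the last equality using that the discrete moments are constant in their argument. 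Multiplying the $d+1$ resulting factors, the powers of $w$ cancel exactly (because $l+|\mathbf{j}|=n+1$) and the signs combine to $(-1)^{n+1}$; taking into account the overall minus sign (which turns $(-1)^{n+1}$ into $(-1)^{n}$) and the multinomial identity $\prod_{i}\sum_{d_{i}=0}^{j_{i}}=\sum_{\mathbf{d}=0}^{\mathbf{j}}\binom{\mathbf{j}}{\mathbf{d}}$, one checks that $w^{n+1}$ times this main part equals, \emph{for every} $w$, precisely the right-hand side of the asserted formula.

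It then remains to show that the complementary contribution, namely $w^{n+1}$ times the same sum with every $\partial^{l+|\mathbf{j}|}f(\zeta_{u,x},\zeta_{\mathbf{v},\mathbf{y}})$ replaced by $\rho_{l,\mathbf{j}}(u,\mathbf{v})$, tends to $0$. Fix $\varepsilon>0$ and choose $\delta>0$ with $|\rho_{l,\mathbf{j}}(u,\mathbf{v})|<\varepsilon$ whenever $\|(u,\mathbf{v})-(x,\mathbf{y})\|_{\infty}\leq\delta$. Split the sum over $(k,\mathbf{m})$ into the part with $\|(x,\mathbf{y})-(k/w,\mathbf{m}/w)\|_{\infty}\leq\delta/2$ and its complement. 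On the former, for $w$ large ($1/w\leq\delta/2$) the whole cube $I_{k}^{w}\times I_{\mathbf{m}}^{w}$ lies within $\delta$ of $(x,\mathbf{y})$, so $|\rho_{l,\mathbf{j}}|<\varepsilon$ there; bounding $|x-u|^{l}$ and $|y_{i}-v_{i}|^{j_{i}}$ by the convexity inequalities used in Theorem \ref{thm1} and summing, the factor $w^{n+1}$ is absorbed exactly and this part is at most $C\varepsilon$ with $C$ depending only on $n,d$ and the absolute moments $M_{r}(\phi),M_{r}(\psi_{i})$ (all finite for $r\leq n+1$ by Remark \ref{rmk1}, since the kernels have order $n+1$). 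On the complement one has $|wx-k|>w\delta/2$ or $|wy_{i}-m_{i}|>w\delta/2$ for some $i$; after the same estimates one is reduced to tail sums of the type $\sum_{|wx-k|>w\delta/2}(|wx-k|^{l}+1)|\phi(wx-k)|$ and their $\psi_{i}$-analogues, which tend to $0$ as $w\to\infty$ by Remark \ref{rmk1}, the remaining factors staying bounded by finite absolute moments. Letting $w\to\infty$ and then $\varepsilon\to0$ completes the argument.

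The main obstacle is exactly this last error estimate: one must verify that the polynomial growth $(|wx-k|^{l}+1)\prod_{i}(|wy_{i}-m_{i}|^{j_{i}}+1)$ produced by the Kantorovich moments is compensated \emph{exactly} by the $w^{-(n+1)}$ coming from integrating over cubes of side $1/w$, so that after multiplying by $w^{n+1}$ one is still left with a series controlled by the absolute moments and amenable to the tail estimate of Remark \ref{rmk1}. The computation of $Q_{l,\mathbf{j}}(w)$ itself, although it requires careful sign and multi-index bookkeeping, is routine once the substitution $u=(k+t)/w$ has been made.
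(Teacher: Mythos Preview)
Your proposal is correct and follows essentially the same route as the paper: Taylor's formula of order $n+1$ reduces $K_{n,w}^{\phi,\psi}f-f$ to a weighted sum of $(n+1)$-st order remainders, one writes $\partial^{l+\mathbf{j}}f(\zeta_{u,x},\zeta_{\mathbf{v},\mathbf{y}})=\partial^{l+\mathbf{j}}f(x,\mathbf{y})+\rho_{l,\mathbf{j}}$, the constant part is evaluated via the binomial splitting $u-x=(u-k/w)+(k/w-x)$ (your substitution $u=(k+t)/w$ is the same thing) together with the constancy of the discrete moments, and the $\rho$-part is killed by the near/far decomposition and Remark~\ref{rmk1}, exactly as in Theorem~\ref{thm2}(i). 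The only cosmetic difference is that the paper flips the sign $(x-u)^{l}(\mathbf{y}-\mathbf{v})^{\mathbf{j}}=(-1)^{n+1}(u-x)^{l}(\mathbf{v}-\mathbf{y})^{\mathbf{j}}$ before expanding, and then forms the difference with the claimed limit directly rather than first isolating your $Q_{l,\mathbf{j}}(w)$; the moment computation and the error analysis are identical.
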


\begin{proof}
    Using Taylor's formula, we can write
  \begin{eqnarray*}
        &&K_{n,w}^{\phi,\psi}f(x,\mathbf{y}) - f(x,\mathbf{y}) \\
   && =-\sum_{k \in \mathbb{Z}} \sum_{\mathbf{m} \in\mathbb{Z}^d} \phi(wx - k) \prod_{i=1}^{d} \psi_{i}(wy_{i}-m_{i}) w^{d+1}    \\
    & & \times \int_{ I_{k}^{w}} \int_{ I_{\mathbf{m}}^{w}}\sum_{l + |\mathbf{j}| = n+1} \dfrac{1}{l! \mathbf{j}!}  \partial^{l + \mathbf{j}} f(\zeta_{u,x}, \zeta_{\mathbf{v},\mathbf{y}})  (x-u)^l (\mathbf{y}-\mathbf{v})^{\mathbf{j}}  \, du \, d\mathbf{v}. \\
    &&=\sum_{k \in \mathbb{Z}} \sum_{\mathbf{m} \in\mathbb{Z}^d} \phi(wx - k) \prod_{i=1}^{d} \psi_{i}(wy_{i}-m_{i}) w^{d+1}  \int_{ I_{k}^{w}} \int_{ I_{\mathbf{m}}^{w}}\sum_{l + |\mathbf{j}| = n+1} \dfrac{(-1)^{n}}{l! \mathbf{j}!}  \partial^{l + \mathbf{j}} f(\zeta_{u,x}, \zeta_{\mathbf{v},\mathbf{y}})  \\
    &&  \times \sum_{c=0}^{l} \sum_{\mathbf{d}=0}^{\mathbf{j}} \binom{l}{c}\binom{\mathbf{j}}{\mathbf{d}} \left(\dfrac{k}{w}-x\right)^{c}\left(u-\dfrac{k}{w}\right)^{l-c} \left(\dfrac{\mathbf{m}}{w}-\mathbf{y}\right)^{\mathbf{d}} \left(\mathbf{v}-\dfrac{\mathbf{m}}{w}\right)^{\mathbf{j}-\mathbf{d}} \, du \, d\mathbf{v}. \\
    && = \sum_{l + |\mathbf{j}| = n+1} \dfrac{(-1)^{n}}{l! \mathbf{j}!} \sum_{c=0}^{l} \sum_{\mathbf{d}=0}^{\mathbf{j}} \binom{l}{c}\binom{\mathbf{j}}{\mathbf{d}}\sum_{k \in \mathbb{Z}} \sum_{\mathbf{m} \in\mathbb{Z}^d} \phi(wx - k) \prod_{i=1}^{d} \psi_{i}(wy_{i}-m_{i}) \left(\dfrac{k}{w}-x\right)^{c}\\
    &&\times\left(\dfrac{\mathbf{m}}{w}-\mathbf{y}\right)^{\mathbf{d}}
    w^{d+1}  \int_{ I_{k}^{w}} \int_{ I_{\mathbf{m}}^{w}}\partial^{l + \mathbf{j}} f(\zeta_{u,x}, \zeta_{\mathbf{v},\mathbf{y}}) \left(u-\dfrac{k}{w}\right)^{l-c} \left(\mathbf{v}-\dfrac{\mathbf{m}}{w}\right)^{\mathbf{j}-\mathbf{d}} \, du \, d\mathbf{v}.
  \end{eqnarray*}
We have
\begin{align*}
     & w^{n+1}(K_{n,w}^{\phi,\psi}f(x,\mathbf{y}) - f(x,\mathbf{y}))\\
    &-\displaystyle \sum_{l + |\mathbf{j}| = n+1} \dfrac{(-1)^{n}}{l! \mathbf{j}!} \sum_{c=0}^{l} \sum_{\mathbf{d}=0}^{\mathbf{j}} \binom{l}{c}\binom{\mathbf{j}}{\mathbf{d}}\dfrac{m_{c}(\phi)}{l-c+1}\prod_{i=1}^{d} \dfrac{m_{d_{i}}(\psi_{i})}{j_{i}-{d_{i}+1}} \partial^{l + \mathbf{j}} f(x, \mathbf{y})\\
    &= \sum_{l + |\mathbf{j}| = n+1} \dfrac{(-1)^{n}}{l! \mathbf{j}!} \sum_{c=0}^{l} \sum_{\mathbf{d}=0}^{\mathbf{j}} \binom{l}{c}\binom{\mathbf{j}}{\mathbf{d}}\sum_{k \in \mathbb{Z}} \sum_{\mathbf{m} \in\mathbb{Z}^d} \phi(wx - k) \prod_{i=1}^{d} \psi_{i}(wy_{i}-m_{i})\left(k-wx\right)^{c}\\
    &\times \left(\mathbf{m}-w\mathbf{y}\right)^{\mathbf{d}}
      w^{d+1} \int_{ I_{k}^{w}} \int_{ I_{\mathbf{m}}^{w}}\left(\partial^{l + \mathbf{j}} f(\zeta_{u,x}, \zeta_{\mathbf{v},\mathbf{y}})-\partial^{l + \mathbf{j}} f(x, \mathbf{y})\right)  \left(wu-k\right)^{l-c} \left(w\mathbf{v}-\mathbf{m}\right)^{\mathbf{j}-\mathbf{d}}\, du \, d\mathbf{v}\\
      &:=\sum_{l + |\mathbf{j}| = n+1} \dfrac{(-1)^{n}}{l! \mathbf{j}!} \sum_{c=0}^{l} \sum_{\mathbf{d}=0}^{\mathbf{j}} \binom{l}{c}\binom{\mathbf{j}}{\mathbf{d}} S,
\end{align*}
where
\begin{eqnarray*}
    S&:=&\sum_{k \in \mathbb{Z}} \sum_{\mathbf{m} \in\mathbb{Z}^d} \phi(wx - k) \prod_{i=1}^{d} \psi_{i}(wy_{i}-m_{i})\left(k-wx\right)^{c}
     \left(\mathbf{m}-w\mathbf{y}\right)^{\mathbf{d}}\\
      &\times& w^{d+1} \int_{ I_{k}^{w}} \int_{ I_{\mathbf{m}}^{w}}\left(\partial^{l + \mathbf{j}} f(\zeta_{u,x}, \zeta_{\mathbf{v},\mathbf{y}})-\partial^{l + \mathbf{j}} f(x, \mathbf{y})\right)  \left(wu-k\right)^{l-c} \left(w\mathbf{v}-\mathbf{m}\right)^{\mathbf{j}-\mathbf{d}}\, du \, d\mathbf{v}.
\end{eqnarray*}
Now let $\epsilon>0$ be given. By definition of continuity of 
 $\partial^{l + \mathbf{j}}f$ on $\mathbb{R}\times\mathbb{R}^{d},$ \ $ \exists \ \delta_{l,\bold{j}}>0$ such that
$$|\partial^{l + \mathbf{j}}f(u, \mathbf{v})-\partial^{l + \mathbf{j}}f(x, \mathbf{y})| < \epsilon$$ whenever $\|(x,\bold{y})-(u,\bold{v})\|_{\infty}\leq \delta_{l,\bold{j}}.$ Using this, we can decompose the $S$ as $S_{1}+S_{2},$ where
\begin{eqnarray*}
    S_{1}&:=&\mathop{\sum\sum}_{\substack{\left\|\left(x,\bold{y}\right)-\left(\dfrac{k}{w},\dfrac{\mathbf{m}}{w}\right)\right\|_{\infty}\leq \dfrac{\delta_{l,\bold{j}}}{2}}} \phi(wx - k) \prod_{i=1}^{d} \psi_{i}(wy_{i}-m_{i})w^{d+1}\left(k-wx\right)^{c} \left(\mathbf{m}-w\mathbf{y}\right)^{\mathbf{d}}\\
    &&\times
       w^{d+1}\int_{ I_{k}^{w}} \int_{ I_{\mathbf{m}}^{w}}\left(\partial^{l + \mathbf{j}} f(\zeta_{u,x}, \zeta_{\mathbf{v},\mathbf{y}})-\partial^{l + \mathbf{j}} f(x, \mathbf{y})\right)  \left(wu-k\right)^{l-c} \left(w\mathbf{v}-\mathbf{m}\right)^{\mathbf{j}-\mathbf{d}}\, du \, d\mathbf{v},
\end{eqnarray*}
and
\begin{eqnarray*}
    S_{2}&:=&\mathop{\sum\sum}_{\substack{\left\|\left(x,\bold{y}\right)-\left(\dfrac{k}{w},\dfrac{\mathbf{m}}{w}\right)\right\|_{\infty}> \dfrac{\delta_{l,\bold{j}}}{2}}} \phi(wx - k) \prod_{i=1}^{d} \psi_{i}(wy_{i}-m_{i})w^{d+1}\left(k-wx\right)^{c} \left(\mathbf{m}-w\mathbf{y}\right)^{\mathbf{d}}\\
    &&\times
      w^{d+1} \int_{ I_{k}^{w}} \int_{ I_{\mathbf{m}}^{w}}\left(\partial^{l + \mathbf{j}} f(\zeta_{u,x}, \zeta_{\mathbf{v},\mathbf{y}})-\partial^{l + \mathbf{j}} f(x, \mathbf{y})\right)  \left(wu-k\right)^{l-c} \left(w\mathbf{v}-\mathbf{m}\right)^{\mathbf{j}-\mathbf{d}}\, du \, d\mathbf{v}.
\end{eqnarray*}

Now, using a similar argument as in part (i) of Theorem \ref{thm2}, we can show that $S_{1},  S_{2}\to 0$ as $w\to \infty.$ Hence, we have
\begin{eqnarray*}
    &&  \displaystyle\lim_{w\to \infty} w^{n+1}(K_{n,w}^{\phi,\psi}f(x,\mathbf{y}) - f(x,\mathbf{y}))\\
    &&=\displaystyle \sum_{l + |\mathbf{j}| = n+1} \dfrac{(-1)^{n}}{l! \mathbf{j}!} \sum_{c=0}^{l} \sum_{\mathbf{d}=0}^{\mathbf{j}} \binom{l}{c}\binom{\mathbf{j}}{\mathbf{d}}\dfrac{m_{c}(\phi)}{l-c+1}\prod_{i=1}^{d} \dfrac{m_{d_{i}}(\psi_{i})}{j_{i}-{d_{i}+1}} \partial^{l + \mathbf{j}} f(x, \mathbf{y}).
\end{eqnarray*}
Thus, the proof is completed. 
\end{proof}

   \section{simultaneous approximation} \label{sec3}
In this section, we obtain certain simultaneous approximation results of Hermite-type sampling Kantorovich operators, including the uniform approximation, asymptotic formula, and the approximation error in terms of modulus of continuity in mixed settings.  
\begin{lemma}\label{lemma1}
      Let $\phi, \psi_{i} \in C_{b}^{n}(\mathbb{R} ), \ i=1,...,d$ be a kernel of order $n$ such that for every $l=1,...,n$ the series $$\sum_{k \in \mathbb{Z}} |u-k|^{n} |\phi^{(l)}(u-k)|\ and \ \sum_{k \in \mathbb{Z}} |u-k|^{n} |\psi_{i}^{(l)}(u-k)|,\ \ i=1,...,d$$ are uniformly convergent for $u\in \mathbb{R}.$ For every
 $f \in C^{n}(\mathbb{R} \times \mathbb{R}^{d})$  and  $p+|\mathbf{q}|\leq n,$ we have 
 $$\partial^{p+\mathbf{q}}K_{n,w}^{\phi,\psi}f= \sum_{a=0}^{p} \sum_{\mathbf{b}=0}^{\mathbf{q}} \binom{p}{a}\binom{\mathbf{q}}{\mathbf{b}} w^{a+|\mathbf{b}|}K_{n-(p+|\mathbf{q}|-a-|\mathbf{b}|),w}^{\phi^{(a)},\psi^{(\mathbf{b})}} \partial^{p+\mathbf{q}-a-\mathbf{b}}f.$$
\end{lemma}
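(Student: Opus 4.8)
The plan is to move $\partial^{p+\mathbf q}$ inside the double series and then apply the Leibniz rule to each summand. First I would justify the term-by-term differentiation: under the hypotheses on $\phi,\psi_i$ (that $\phi,\psi_i\in C_b^n(\mathbb R)$ and that $\sum_k|u-k|^n|\phi^{(l)}(u-k)|$, $\sum_k|u-k|^n|\psi_i^{(l)}(u-k)|$ are uniformly convergent for $l=1,\dots,n$), together with $f\in C^n(\mathbb R\times\mathbb R^d)$ and $p+|\mathbf q|\leq n$, the series obtained by formally applying $\partial^{p+\mathbf q}$ to each summand of $K_{n,w}^{\phi,\psi}f$ is a finite linear combination of series of the type controlled in Remark~\ref{rmk1}, hence converges uniformly on compacta. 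This is precisely the argument already carried out in the proof of Proposition~\ref{prop1} (with its parameter $p$ replaced by $p+|\mathbf q|$), which I would invoke; consequently $\partial^{p+\mathbf q}K_{n,w}^{\phi,\psi}f=\sum_k\sum_{\mathbf m}\partial^{p+\mathbf q}$ of the $(k,\mathbf m)$-th summand.

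Second, I would differentiate a single summand. Its $(x,\mathbf y)$-dependence enters only through the kernel factor $\phi(wx-k)\prod_{i=1}^d\psi_i(wy_i-m_i)$ and through the monomial $(x-u)^l\prod_{i=1}^d(y_i-v_i)^{j_i}$ inside the integral; the factor $\partial^{l+\mathbf j}f(u,\mathbf v)$ is constant in $(x,\mathbf y)$. Applying the Leibniz rule separately in $x$ (order $p$) and in each $y_i$ (order $q_i$), and using $\partial_x^a\phi(wx-k)=w^a\phi^{(a)}(wx-k)$ and $\partial_{y_i}^{b_i}\psi_i(wy_i-m_i)=w^{b_i}\psi_i^{(b_i)}(wy_i-m_i)$, I obtain a sum over $0\leq a\leq p$ and $0\leq\mathbf b\leq\mathbf q$ (componentwise), with coefficient $\binom{p}{a}\binom{\mathbf q}{\mathbf b}w^{a+|\mathbf b|}$, multiplied by $\phi^{(a)}(wx-k)\prod_{i=1}^d\psi_i^{(b_i)}(wy_i-m_i)$ and by the integral in which the monomial has been differentiated $p-a$ times in $x$ and $q_i-b_i$ times in each $y_i$.

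Third, the reindexing step: using $\partial_x^{p-a}(x-u)^l=\frac{l!}{(l-(p-a))!}(x-u)^{l-(p-a)}$ (which is $0$ when $l<p-a$) and the analogue for each $y_i$, the factorials cancel against the weight $1/(l!\,\mathbf j!)$. Substituting $l=\tilde l+(p-a)$ and $j_i=\tilde j_i+(q_i-b_i)$, the constraint $l+|\mathbf j|\leq n$ becomes $\tilde l+|\tilde{\mathbf j}|\leq n-(p+|\mathbf q|-a-|\mathbf b|)$, the weight becomes $1/(\tilde l!\,\tilde{\mathbf j}!)$, and $\dfrac{\partial^{l+|\mathbf j|}f}{\partial u^l\partial\mathbf v^{\mathbf j}}$ turns into $\dfrac{\partial^{\tilde l+|\tilde{\mathbf j}|}\bigl(\partial^{p+\mathbf q-a-\mathbf b}f\bigr)}{\partial u^{\tilde l}\partial\mathbf v^{\tilde{\mathbf j}}}$. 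Comparing with the definition of $K_{n,w}^{\phi,\psi}$, the $(a,\mathbf b)$-block is exactly $\binom{p}{a}\binom{\mathbf q}{\mathbf b}w^{a+|\mathbf b|}K_{n-(p+|\mathbf q|-a-|\mathbf b|),w}^{\phi^{(a)},\psi^{(\mathbf b)}}\partial^{p+\mathbf q-a-\mathbf b}f$, and summing over $(a,\mathbf b)$ yields the asserted identity. Here $K^{\phi^{(a)},\psi^{(\mathbf b)}}$ with $a\geq1$ or $\mathbf b\neq\mathbf 0$ is read simply as the same sampling series, even though $\phi^{(a)},\psi_i^{(b_i)}$ need not satisfy $\sum_k\chi(u-k)=1$; its convergence still holds under the stated hypotheses.

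I expect the only real obstacle to be this reindexing bookkeeping: one must observe that differentiating the monomial simultaneously lowers its degree and, through the derivatives that land on $f$, raises the order of the sampled partial derivative, so that after the substitution the shifted sum is a genuine lower-order Hermite-type Kantorovich operator evaluated at a partial derivative of $f$. The interchange of differentiation and summation is routine once Proposition~\ref{prop1} is invoked, and the multivariate Leibniz expansion is a direct computation because the $x$- and $y_i$-dependences are separated.
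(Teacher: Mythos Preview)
Your argument is correct and complete; the reindexing you describe is exactly the right cancellation, and invoking Proposition~\ref{prop1} to justify term-by-term differentiation is legitimate under the stated hypotheses. However, your route differs from the paper's. The paper does not apply the full Leibniz expansion at once; instead it proceeds by induction on the order of differentiation in one variable at a time. It first verifies the case $p=1$ (one $x$-derivative) by direct computation, obtaining $\partial_x K_{n,w}^{\phi,\psi}f = wK_{n,w}^{\phi',\psi}f + K_{n-1,w}^{\phi,\psi}\partial_x f$, and then bootstraps to general $p$ by writing $\partial_x^p = \partial_x\circ\partial_x^{p-1}$, applying the inductive hypothesis, reusing the $p=1$ identity on each term, and combining via Pascal's rule $\binom{p-1}{a}+\binom{p-1}{a-1}=\binom{p}{a}$. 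The $y_i$-derivatives are handled the same way, and the mixed case follows by composing the one-variable identities. Your direct Leibniz-plus-reindexing approach is shorter and more transparent once the bookkeeping is written out carefully, while the paper's inductive approach trades the multi-index substitution for a more elementary one-step recursion repeated several times.
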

\begin{proof}
  We aim to prove the identity   $$\dfrac{\partial^{p}K_{n,w}^{\phi,\psi}f}{\partial x^{p}}= \displaystyle\sum_{a=0}^{p}  \binom{p}{a} w^{a}K_{n-(p-a),w}^{\phi^{(a)},\psi} \dfrac{\partial^{p-a}f}{\partial x^{p-a}}.$$
  For $p=0,$ the result is immediate.
  Now, let us consider the case $p=1.$
  \begin{eqnarray*}
     \dfrac{\partial K_{n,w}^{\phi,\psi}f(x, \mathbf{y})}{\partial x}&= &\displaystyle\sum_{k\in \mathbb{Z}}\sum_{\mathbf{m} \in\mathbb{Z}^{d}} w \ \phi^{'}(wx-k)\  \prod_{i=1}^{d}\psi_{i}(wy_{i}-m_{i})\\
     && \times \left(\sum_{l+|\bold{j}|\leq n}\dfrac{1}{l! \bold{j}!}w^{d+1}\int_{I_{k}^{w}}\int_{I_{\mathbf{m}}^{w}}\dfrac{\partial^{l+|\bold{j} |}f(u,\bold{v})}{\partial u^{l}\partial \bold{v}^{\bold{j}}}{(x-u)}^{l}{(\bold{y}-\bold{v})^{\bold{j}}}\  du \ d\bold{v}\right)\\
     &+&\displaystyle\sum_{k\in \mathbb{Z}}\sum_{\mathbf{m} \in\mathbb{Z}^{d}}  \phi(wx-k)\  \prod_{i=1}^{d}\psi_{i}(wy_{i}-m_{i})\\
     && \times \left(\sum_{l+|\bold{j}|\leq n}\dfrac{1}{l! \bold{j}!}w^{d+1}\int_{I_{k}^{w}}\int_{I_{\mathbf{m}}^{w}}\dfrac{\partial^{l+|\bold{j} |}f(u,\bold{v})}{\partial u^{l}\partial \bold{v}^{\bold{j}}}l {(x-u)}^{l-1}{(\bold{y}-\bold{v})^{\bold{j}}}\  du \ d\bold{v}\right)\\
     &=& w \ K_{n,w}^{\phi^{'},\psi} f(x,\mathbf{y})+ K_{n-1,w}^{\phi,\psi} \frac{\partial f(x,\mathbf{y})}{\partial x}.
\end{eqnarray*}
We now assume that the statement holds for $p-1$ and we prove that the result is true for $p.$\\

\noindent
$\dfrac{\partial^{p}K_{n,w}^{\phi,\psi}f(x, \mathbf{y})}{\partial x^{p}}$
\begin{eqnarray*}
   &=&\dfrac{\partial}{\partial x}\left(\dfrac{\partial^{p-1}K_{n,w}^{\phi,\psi}f(x, \mathbf{y})}{\partial x^{p-1}}\right) \\
   &=& \dfrac{\partial}{\partial x}\left( \displaystyle\sum_{a=0}^{p-1}  \binom{p-1}{a} w^{a}K_{n-(p-1-a),w}^{\phi^{(a)},\psi} \dfrac{\partial^{p-1-a}f(x,\mathbf{y})}{\partial x^{p-1-a}}\right)\\
   &=& \displaystyle\sum_{a=0}^{p-1}  \binom{p-1}{a} w^{a} \left(w\ K_{n-(p-1-a),w}^{\phi^{(a+1)},\psi} \dfrac{\partial^{p-1-a}f(x,\mathbf{y})}{\partial x^{p-1-a}} +K_{n-(p-a),w}^{\phi^{(a)},\psi} \dfrac{\partial^{p-a}f(x,\mathbf{y})}{\partial x^{p-a}}\right)\\
   &=& \displaystyle\sum_{a=0}^{p-1}  \binom{p-1}{a} w^{a} K_{n-(p-a),w}^{\phi^{(a)},\psi} \dfrac{\partial^{p-a}f(x,\mathbf{y})}{\partial x^{p-a}}
   + \displaystyle\sum_{a=1}^{p}  \binom{p-1}{a+1} w^{a} K_{n-(p-a),w}^{\phi^{(a)},\psi} \dfrac{\partial^{p-a}f(x,\mathbf{y})}{\partial x^{p-a}}\\
   &=&\displaystyle\sum_{a=0}^{p}  \binom{p}{a} w^{a} K_{n-(p-a),w}^{\phi^{(a)},\psi} \dfrac{\partial^{p-a}f(x,\mathbf{y})}{\partial x^{p-a}}.
\end{eqnarray*}
Using similar analysis, we can show that
$$\dfrac{\partial^{q_{i}}K_{n,w}^{\phi,\psi}f(x, \mathbf{y})}{\partial y_{i}^{q_{i}}}= \displaystyle\sum_{b_{i}=0}^{q_{i}}  \binom{q_{i}}{b_{i}} w^{b_{i}}K_{n-(q_{i}-b_{i}),w}^{\phi,\psi^{(b_{i})}} \dfrac{\partial^{q_{i}-b_{i}}f(x,\mathbf{y})}{\partial y^{q_{i}-b_{i}}}.$$
Thus, we have 
 $$\partial^{p+\mathbf{q}}K_{n,w}^{\phi,\psi}f= \sum_{a=0}^{p} \sum_{\mathbf{b}=0}^{\mathbf{q}} \binom{p}{a}\binom{\mathbf{q}}{\mathbf{b}} w^{a+|\mathbf{b}|}K_{n-(p+|\mathbf{q}|-a-|\mathbf{b}|),w}^{\phi^{(a)},\psi^{(\mathbf{b})}} \partial^{p+\mathbf{q}-a-\mathbf{b}}f.$$
 This completes the proof.
\end{proof}
   \begin{thm}\label{thm4}
  Let $\phi, \psi_{i} \in C_{b}^{n}(\mathbb{R} ), \ i=1,...,d$ be a kernel of order $n$ such that for every $l=1,...,n,$ the series $$\sum_{k \in \mathbb{Z}} |u-k|^{n} |\phi^{(l)}(u-k)|\ \ and \ \ \sum_{k \in \mathbb{Z}} |u-k|^{n} |\psi_{i}^{(l)}(u-k)|,\ \ i=1,...,d$$ are uniformly convergent for $u\in \mathbb{R}.$ For every
 $f \in C^{n}(\mathbb{R} \times \mathbb{R}^{d})$  and  $p+|\mathbf{q}|\leq n,$ we have 
  $$ \lim_{w \to \infty} \|\partial^{p+\mathbf{q}}K_{n,w}^{\phi,\psi}f-\partial^{p+\mathbf{q}}f\|_{\infty}=0.$$
\end{thm}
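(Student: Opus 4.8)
The plan is to reduce the statement, through the differentiation identity of Lemma~\ref{lemma1}, to the already established uniform convergence theorem (Theorem~\ref{thm1}) together with a decay estimate for the sampling Kantorovich operators built from the \emph{derivatives} of the kernels $\phi$ and $\psi_i$.

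First I would invoke Lemma~\ref{lemma1} to write
\begin{align*}
\partial^{p+\mathbf{q}}K_{n,w}^{\phi,\psi}f
&=K_{n-(p+|\mathbf{q}|),w}^{\phi,\psi}\partial^{p+\mathbf{q}}f\\
&\quad+\sum_{\substack{0\le a\le p,\ \mathbf{0}\le\mathbf{b}\le\mathbf{q}\\ a+|\mathbf{b}|\ge 1}}\binom{p}{a}\binom{\mathbf{q}}{\mathbf{b}}\,w^{a+|\mathbf{b}|}\,K_{n-(p+|\mathbf{q}|-a-|\mathbf{b}|),w}^{\phi^{(a)},\psi^{(\mathbf{b})}}\partial^{p+\mathbf{q}-a-\mathbf{b}}f,
\end{align*}
isolating the summand $a=0,\ \mathbf{b}=\mathbf{0}$. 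For that isolated term, $\phi$ and $\psi_i$ are kernels of order $n$, hence of order $n-(p+|\mathbf{q}|)$ by Remark~\ref{rmk1}, while $\partial^{p+\mathbf{q}}f\in C_b^{\,n-(p+|\mathbf{q}|)}(\mathbb{R}\times\mathbb{R}^d)$ because $f\in C^n(\mathbb{R}\times\mathbb{R}^d)$; so Theorem~\ref{thm1} gives $\|K_{n-(p+|\mathbf{q}|),w}^{\phi,\psi}\partial^{p+\mathbf{q}}f-\partial^{p+\mathbf{q}}f\|_\infty\to 0$. It remains to show that each of the finitely many remaining summands tends to $0$, which is the heart of the proof. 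Fix such a term, set $t:=a+|\mathbf{b}|\ge 1$, $m:=n-(p+|\mathbf{q}|-a-|\mathbf{b}|)$ (so $1\le t\le m$, since $m-t=n-(p+|\mathbf{q}|)\ge 0$), and $g:=\partial^{p+\mathbf{q}-a-\mathbf{b}}f\in C^m(\mathbb{R}\times\mathbb{R}^d)$; the target is $w^{t}\,\|K_{m,w}^{\phi^{(a)},\psi^{(\mathbf{b})}}g\|_\infty\to 0$. The argument rests on two observations: (a) differentiating $\sum_{k\in\mathbb{Z}}\phi(u-k)=1$ term by term (legitimate since $\sum_{k}|\phi^{(r)}(u-k)|$ converges uniformly by hypothesis) yields $\sum_{k\in\mathbb{Z}}\phi^{(a)}(u-k)=0$ for $a\ge 1$, and likewise $\sum_{m_i\in\mathbb{Z}}\psi_i^{(b_i)}(u-m_i)=0$ when $b_i\ge 1$; (b) the internal sum $\sum_{l+|\mathbf{j}|\le m}\frac{1}{l!\mathbf{j}!}\partial^{l+\mathbf{j}}P(u,\mathbf{v})(x-u)^l(\mathbf{y}-\mathbf{v})^{\mathbf{j}}$ reproduces every polynomial $P$ of total degree $\le m$ exactly, so $K_{m,w}^{\phi^{(a)},\psi^{(\mathbf{b})}}$ with $t\ge 1$ annihilates such polynomials. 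Expanding $g(x,\mathbf{y})$ by Taylor's formula to order $m$ about $(u,\mathbf{v})$ and using (a)--(b) to cancel the degree $\le m$ part, only a Lagrange remainder of order $m$ survives:
\begin{align*}
K_{m,w}^{\phi^{(a)},\psi^{(\mathbf{b})}}g(x,\mathbf{y})
&=-\sum_{k\in\mathbb{Z}}\sum_{\mathbf{m}\in\mathbb{Z}^d}\phi^{(a)}(wx-k)\prod_{i=1}^{d}\psi_i^{(b_i)}(wy_i-m_i)\,w^{d+1}\\
&\quad\times\int_{I_k^w}\int_{I_{\mathbf{m}}^w}\sum_{l+|\mathbf{j}|=m}\frac{\partial^{l+\mathbf{j}}g(\zeta_{u,x},\zeta_{\mathbf{v},\mathbf{y}})-\partial^{l+\mathbf{j}}g(u,\mathbf{v})}{l!\,\mathbf{j}!}(x-u)^l(\mathbf{y}-\mathbf{v})^{\mathbf{j}}\,du\,d\mathbf{v}.
\end{align*}

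From here I would estimate exactly as in part (i) of Theorem~\ref{thm2}: apply Jensen's inequality, which produces the factor $w^{-m}$ together with the absolute moments $M_r(\phi^{(a)}),M_r(\psi_i^{(b_i)})$ (all finite by the uniform convergence hypotheses on the derivative series), and split the double sum according to $\|(x,\mathbf{y})-(k/w,\mathbf{m}/w)\|_\infty\le\delta/2$ versus $>\delta/2$. On the first set the bracketed difference is $<\epsilon$ by uniform continuity of $\partial^{l+\mathbf{j}}g$; on the second set it is $\le 2\|\partial^{l+\mathbf{j}}g\|_\infty$, and the corresponding tails of the moment series of $\phi^{(a)}$ and $\psi_i^{(b_i)}$ vanish as $w\to\infty$ by Remark~\ref{rmk1}. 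This gives $\|K_{m,w}^{\phi^{(a)},\psi^{(\mathbf{b})}}g\|_\infty\le C(g)\,w^{-m}(\epsilon+o(1))$, so multiplying by $w^{t}$ and using $t\le m$ forces $w^{t}\|K_{m,w}^{\phi^{(a)},\psi^{(\mathbf{b})}}g\|_\infty\to 0$. Summing the isolated term and the finitely many error terms finishes the proof.

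The main obstacle is precisely this decay estimate for the error terms: the prefactor $w^{a+|\mathbf{b}|}$ makes a naive $O(1)$ bound on $K_{m,w}^{\phi^{(a)},\psi^{(\mathbf{b})}}g$ useless, and the decisive point is that exact reproduction of polynomials of total degree $\le m$ by the internal Taylor sum, combined with $\sum_{k}\phi^{(a)}(u-k)=0$, upgrades that bound to $O(w^{-m})$ with $m\ge a+|\mathbf{b}|$. Everything else is a repetition of the splitting technique already used for Theorems~\ref{thm1} and \ref{thm2}.
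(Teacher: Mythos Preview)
Your proposal is correct and follows essentially the same route as the paper's own proof: invoke Lemma~\ref{lemma1} to decompose $\partial^{p+\mathbf{q}}K_{n,w}^{\phi,\psi}f$, handle the $a=0,\ \mathbf{b}=\mathbf{0}$ summand via Theorem~\ref{thm1}, and kill each remaining summand by combining the vanishing $\sum_k\phi^{(a)}(u-k)=0$ (for $a\ge 1$) with the Taylor remainder and the moment/splitting estimate of Theorem~\ref{thm2}(i). The only cosmetic difference is that the paper distinguishes the cases $p+|\mathbf{q}|<n$ (where a crude $O(w^{-(n-p-|\mathbf{q}|)})$ bound already suffices) and $p+|\mathbf{q}|=n$ (where the $\epsilon$-splitting is genuinely needed), while you run the $\epsilon$-splitting uniformly for all $a+|\mathbf{b}|\ge 1$; both variants are valid.
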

\begin{proof}
    Using Proposition \ref{prop1}, we see that $K_{n,w}^{\phi,\psi}f$ is $n$ times differentiable.
    If $ p+|\mathbf{q}|=0$, then the conclusion follows from Theorem \ref{thm1}.
    Let  $1 \leq p+|\mathbf{q}|\leq n.$
   Then, by above Lemma \ref{lemma1}, we have
\myeq{
        \partial^{p+\mathbf{q}}K_{n,w}^{\phi,\psi}f= \sum_{a=0}^{p} \sum_{\mathbf{b}=0}^{\mathbf{q}} \binom{p}{a}\binom{\mathbf{q}}{\mathbf{b}} w^{a+|\mathbf{b}|}K_{n-(p+|\mathbf{q}|-a-|\mathbf{b}|),w}^{\phi^{(a)},\psi^{(\mathbf{b})}} \partial^{p+\mathbf{q}-a-\mathbf{b}}f.
   }
   Then, we get
   \myeq{ 
       \| \partial^{p+\mathbf{q}}K_{n,w}^{\phi,\psi}f- \partial^{p+\mathbf{q}}f\|_{\infty}
       &\leq\|K_{n-(p+|\mathbf{q}|),w}^{\phi,\psi}\partial^{p+\mathbf{q}}f - \partial^{p+\mathbf{q}}f\|_{\infty}\\
       &+\left\|\mathop{\sum_{a=0}^{p} \sum_{\mathbf{b}=0}^{\mathbf{q}}}_{\substack{a + |\mathbf{b}| \geq 1}} \binom{p}{a}\binom{\mathbf{q}}{\mathbf{b}} w^{a+|\mathbf{b}|}K_{n-(p+|\mathbf{q}|-a-|\mathbf{b}|),w}^{\phi^{(a)},\psi^{(\mathbf{b})}} \partial^{p+\mathbf{q}-a-\mathbf{b}}f\right\|_{\infty}.
     }
  
    If $a+|\mathbf{b}|=0$, then we have Kantorovich-Hermite type sampling operator $K_{n-(p+|\mathbf{q}|),w}^{\phi,\psi} $ with kernel $\phi, \psi $ and order $n-(p+|\mathbf{q}|).$ If $ p+|\mathbf{q}|=n,$ then the operator reduces to the usual Knatorovich operator. Then, we have 
    \myeq{
              \lim_{w \to \infty} \|K_{n-(p+|\mathbf{q}|),w}^{\phi,\psi}\partial^{p+\mathbf{q}}f-\partial^{p+\mathbf{q}}f\|_{\infty}=0.
          }
      If $ p+|\mathbf{q}|<n,$ then (\ref{eq:5}) follows from  Theorem \ref{thm1}. 
Now we consider the case $a+|\mathbf{b}|\geq1.$ We have

\begin{eqnarray*}
    &&w^{a+|\mathbf{b}|}K_{n-(p+|\mathbf{q}|-a-|\mathbf{b}|),w}^{\phi^{(a)},\psi^{(\mathbf{b})}} \partial^{p+\mathbf{q}-a-\mathbf{b}}f \\
         &&=w^{a+|\mathbf{b}|} \displaystyle\sum_{k\in \mathbb{Z}}\sum_{\mathbf{m} \in\mathbb{Z}^{d}}  \phi^{(a)}(wx-k)\  \prod_{i=1}^{d}\psi_{i}^{(b_{i})}(wy_{i}-m_{i})\\
         &&\times\left(\sum_{l+|\bold{j}|\leq n-(p+|\mathbf{q}|-a-|\mathbf{b}|)}\dfrac{1}{l! \bold{j}!}w^{d+1}\int_{I_{k}^{w}}\int_{I_{\mathbf{m}}^{w}}\dfrac{\partial^{l+|\bold{j} |+p+|\mathbf{q}|-a-|\mathbf{b}|}f(u,\bold{v})}{\partial u^{l+p-a}\partial \bold{v}^{\bold{j}+\mathbf{q}-\mathbf{b}}}{(x-u)}^{l}{(\bold{y}-\bold{v})^{\bold{j}}}\  du \ d\bold{v}\right)\\
         &&=w^{a+|\mathbf{b}|} \displaystyle\sum_{k\in \mathbb{Z}}\sum_{\mathbf{m} \in\mathbb{Z}^{d}}  \phi^{(a)}(wx-k)\  \prod_{i=1}^{d}\psi_{i}^{(b_{i})}(wy_{i}-m_{i}) w^{d+1}\int_{I_{k}^{w}}\int_{I_{\mathbf{m}}^{w}} \left( \dfrac{\partial^{p+|\mathbf{q}|-a-|\mathbf{b}|}f(x,\bold{y})}{\partial x^{p-a}\partial \bold{y}^{\mathbf{q}-\mathbf{b}}}  \right.\\
         && \left.+ \sum_{l+|\bold{j}|= n-(p+|\mathbf{q}|-a-|\mathbf{b}|)}\dfrac{1}{l! \bold{j}!} 
          \left( \dfrac{\partial^{l+|\bold{j} |+p+|\mathbf{q}|-a-|\mathbf{b}|}f(u,\bold{v})}{\partial u^{l+p-a}\partial \bold{v}^{\bold{j}+\mathbf{q}-\mathbf{b}}} -\dfrac{\partial^{l+|\bold{j} |+p+|\mathbf{q}|-a-|\mathbf{b}|}f(\zeta_{u,x},\zeta_{\bold{v},\bold{y}})}{\partial u^{l+p-a}\partial \bold{v}^{\bold{j}+\mathbf{q}-\mathbf{b}}}\right) \right.  \\
          &&\times  \left. {(x-u)}^{l}{(\bold{y}-\bold{v})^{\bold{j}}} \right)\  du \ d\bold{v}.
\end{eqnarray*}
  Since  $\displaystyle\sum_{k \in \mathbb{Z}} \phi^{(j)}(u-k)=0 \ \text{and} \ \sum_{k \in \mathbb{Z}} \psi^{(j)}_{i}(u-k)=0 , i=1,...,d, \forall u \in \mathbb{R}, \ j\geq1 ,$ we obtain 
   \myeq{
        &w^{a+|\mathbf{b}|}K_{n-(p+\mathbf{q}-a-|\mathbf{b}|),w}^{\phi^{(a)},\psi^{(\mathbf{b})}} \partial^{p+\mathbf{q}-a-\mathbf{b}}f \\ 
        &=w^{a+|\mathbf{b}|} \displaystyle\sum_{k\in \mathbb{Z}}\sum_{\mathbf{m} \in\mathbb{Z}^{d}}  \phi^{(a)}(wx-k)\  \prod_{i=1}^{d}\psi_{i}^{(b_{i})}(wy_{i}-m_{i}) w^{d+1}\int_{I_{k}^{w}}\int_{I_{\mathbf{m}}^{w}} \sum_{l+|\bold{j}|= n-(p+|\mathbf{q}|-a-|\mathbf{b}|)}\dfrac{1}{l! \bold{j}!}  \\
         &  \times
          \left( \dfrac{\partial^{l+|\bold{j} |+p+|\mathbf{q}|-a-|\mathbf{b}|}f(u,\bold{v})}{\partial u^{l+p-a}\partial \bold{v}^{\bold{j}+\mathbf{q}-\mathbf{b}}} -\dfrac{\partial^{l+|\bold{j} |+p+|\mathbf{q}|-a-|\mathbf{b}|}f(\zeta_{u,x},\zeta_{\bold{v},\bold{y}})}{\partial u^{l+p-a}\partial \bold{v}^{\bold{j}+\mathbf{q}-\mathbf{b}}}\right)   {(x-u)}^{l}{(\bold{y}-\bold{v})^{\bold{j}}}\  du \ d\bold{v}.}
   If $p+|\mathbf{q}|<n,$ then we have
  \begin{eqnarray*}
       &&|w^{a+|\mathbf{b}|}K_{n-(p+|\mathbf{q}|-a-|\mathbf{b}|),w}^{\phi^{(a)},\psi^{(\mathbf{b})}} \partial^{p+\mathbf{q}-a-\mathbf{b}}f|\\ 
      && \leq w^{a+|\mathbf{b}|} \displaystyle\sum_{k\in \mathbb{Z}}\sum_{\mathbf{m} \in\mathbb{Z}^{d}}  |\phi^{(a)}(wx-k)|\  \prod_{i=1}^{d}|\psi_{i}^{(b_{i})}(wy_{i}-m_{i})| w^{d+1}\int_{I_{k}^{w}}\int_{I_{\mathbf{m}}^{w}} \sum_{l+|\bold{j}|= n-(p+|\mathbf{q}|-a-|\mathbf{b}|)}\dfrac{1}{l! \bold{j}!}  \\
         &&  \times
          \left| \dfrac{\partial^{l+|\bold{j} |+p+|\mathbf{q}|-a-|\mathbf{b}|}f(u,\bold{v})}{\partial u^{l+p-a}\partial \bold{v}^{\bold{j}+\mathbf{q}-\mathbf{b}}} -\dfrac{\partial^{l+|\bold{j} |+p+|\mathbf{q}|-a-|\mathbf{b}|}f(\zeta_{u,x},\zeta_{\bold{v},\bold{y}})}{\partial u^{l+p-a}\partial \bold{v}^{\bold{j}+\mathbf{q}-\mathbf{b}}}\right|   {|x-u|}^{l}{|\bold{y}-\bold{v}|^{\bold{j}}}\  du \ d\bold{v} \\
          && \leq w^{a+|\mathbf{b}|} \sum_{l+|\bold{j}|= n-(p+|\mathbf{q}|-a-|\mathbf{b}|)}\dfrac{1}{l! \bold{j}!} \dfrac{2^{l+|\bold{j}|-d}}{w^{l+|\bold{j}|}} \|\partial^{l+\bold{j} +p+\mathbf{q}-a-\mathbf{b}}f\|_{\infty}  \\
          && \times \displaystyle\sum_{k\in \mathbb{Z}}\sum_{\mathbf{m} \in\mathbb{Z}^{d}}  |\phi^{(a)}(wx-k)|\ \prod_{i=1}^{d}|\psi_{i}^{(b_{i})}(wy_{i}-m_{i})|  w^{d+1}    \\
          && \times \int_{I_{k}^{w}}\int_{I_{\mathbf{m}}^{w}} \left(\left|wx-k\right|^{l}+\left|k-wu\right|^{l}\right) \prod_{i=1}^{d} \left(\left|wy_{i}-m_{i}\right|^{j_{i}}+\left|m_{i}-wv_{i}\right|^{j_{i}}\right) du \ d\bold{v}
        \end{eqnarray*}
     \begin{eqnarray*}
         &&=\dfrac{2^{ n-(p+|\mathbf{q}|-a-|\mathbf{b}|)-d}}{w^{n-(p+|\mathbf{q}|)}} \sum_{l+|\bold{j}|= n-(p+|\mathbf{q}|-a-|\mathbf{b}|)}\dfrac{1}{l! \bold{j}!} \|\partial^{l+\bold{j} +p+\mathbf{q}-a-\mathbf{b}}f\|_{\infty}  (M_{l}(\phi)+M_{0}(\phi))\\
       && \times \prod_{i=1}^{d}  (M_{j_{i}}(\psi_{i})+M_{0}(\psi_{i})).
     \end{eqnarray*}
  Taking limit as $w\to\infty$ on both sides of the above expression, we get $$|w^{a+|\mathbf{b}|}K_{n-(p+|\mathbf{q}|-a-|\mathbf{b}|),w}^{\phi^{(a)},\psi^{(\mathbf{b})}} \partial^{p+\mathbf{q}-a-\mathbf{b}}f| \to 0 \ \text{as} \ w \to \infty.$$ \\
   Let $p+|\mathbf{q}|=n.$ Then, we obtain
   \begin{eqnarray*}
        &&|w^{a+|\mathbf{b}|}K_{n-(p+|\mathbf{q}|-a-|\mathbf{b}|),w}^{\phi^{(a)},\psi^{(\mathbf{b})}} \partial^{p+\mathbf{q}-a-\mathbf{b}}f|\\ 
      && \leq w^{a+|\mathbf{b}|} \displaystyle\sum_{k\in \mathbb{Z}}\sum_{\mathbf{m} \in\mathbb{Z}^{d}}  |\phi^{(a)}(wx-k)|\  \prod_{i=1}^{d}|\psi_{i}^{(b_{i})}(wy_{i}-m_{i})| w^{d+1}\int_{I_{k}^{w}}\int_{I_{\mathbf{m}}^{w}} \sum_{l+|\bold{j}|= a+|\mathbf{b}|}\dfrac{1}{l! \bold{j}!}  \\
         &&  \times
          \left| \dfrac{\partial^{l+|\bold{j} |+p+|\mathbf{q}|-a-|\mathbf{b}|}f(u,\bold{v})}{\partial u^{l+p-a}\partial \bold{v}^{\bold{j}+\mathbf{q}-\mathbf{b}}} -\dfrac{\partial^{l+|\bold{j} |+p+|\mathbf{q}|-a-|\mathbf{b}|}f(\zeta_{u,x},\zeta_{\bold{v},\bold{y}})}{\partial u^{l+p-a}\partial \bold{v}^{\bold{j}+\mathbf{q}-\mathbf{b}}}\right|   {|x-u|}^{l}{|\bold{y}-\bold{v}|^{\bold{j}}}\  du \ d\bold{v}. 
   \end{eqnarray*}
       Using the similar analysis as in Theorem \ref{thm3}, we obtain
   \myeq{|w^{a+|\mathbf{b}|}K_{n-(p+|\mathbf{q}|-a-|\mathbf{b}|),w}^{\phi^{(a)},\psi^{(\mathbf{b})}} \partial^{p+\mathbf{q}-a-\mathbf{b}}f| \to 0 \ \text{as} \ w \to \infty.} 
         
   Now, combining the estimates (\ref{eq:4}), (\ref{eq:5}) and (\ref{eq:7}), we get the required result.
   \end{proof}
   
   \begin{thm}\label{thm5}
Let $\phi,  \psi_{i} \in C_{b}^{n}(\mathbb{R} ), \ i=1,...,d$ be a kernel of order $n$ such that for every $l=1,...,n,$ the series $$\displaystyle\sum_{k \in \mathbb{Z}} |u-k|^{n} |\phi^{(l)}(u-k)|\ \ and \ \ \sum_{k \in \mathbb{Z}} |u-k|^{n} |\psi_{i}^{(l)}(u-k)|,\ \ i=1,...,d$$ are uniformly convergent for $u\in \mathbb{R}.$ Further, let
 $f \in C^{n}(\mathbb{R} \times \mathbb{R}^{d}),$ $p+|\mathbf{q}|\leq n$ 
    and \ $M_{n+1}(\phi),  M_{n+1}(\psi_{i})< \infty , \ i=1,...,d.$ Then, we have\\

    \noindent
    $ \|\partial^{p+\mathbf{q}}K_{n,w}^{\phi,\psi}f-\partial^{p+\mathbf{q}}f\|_{\infty}$
    \begin{eqnarray*}
    &&\leq \dfrac{1}{w^{n-(p+|\mathbf{q}|)}}\sum_{a=0}^{p} \sum_{\mathbf{b}=0}^{\mathbf{q}} \binom{p}{a}\binom{\mathbf{q}}{\mathbf{b}} 2^{n-(p+|\mathbf{q}|-a-|\mathbf{b}|)-(d+1)} \sum_{l+|\bold{j}|= n-(p+|\mathbf{q}|-a-|\mathbf{b}|)}\dfrac{1}{l! \bold{j}!}   \\
         && \times \omega\left(\partial^{l+\mathbf{j}+p+\mathbf{q}-a-\mathbf{b}}f,\dfrac{1}{w},\dfrac{1}{w},...,\dfrac{1}{w}\right)\left(M_{l+1}(\phi^{(a)})  +2M_{l}(\phi^{(a)})+M_{1}(\phi^{(a)})+ 
  2 M_{0}(\phi^{(a)})\right)\\
   && \times\prod_{i=1}^{d} \left(M_{j_{i}+1}(\psi_{i}^{(b_{i})})+2M_{j_{i}}(\psi_{i}^{(b_{i})})
   +M_{1}(\psi_{i}^{(b_{i})})+2 M_{0}(\psi_{i}^{(b_{i})})\right).
    \end{eqnarray*}
\end{thm}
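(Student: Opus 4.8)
The plan is to reproduce the scheme of the proof of Theorem~\ref{thm4}, but to retain the quantitative remainder bound of Theorem~\ref{thm2}(ii) rather than merely passing to the limit. By Proposition~\ref{prop1} and the hypotheses on the derivative series, $K_{n,w}^{\phi,\psi}f\in C_b^n(\mathbb{R}\times\mathbb{R}^d)$, so Lemma~\ref{lemma1} applies; writing out that identity and isolating the term $a=0,\ \mathbf{b}=\mathbf{0}$ gives
\begin{equation*}
\partial^{p+\mathbf{q}}K_{n,w}^{\phi,\psi}f-\partial^{p+\mathbf{q}}f=\Big(K_{n-(p+|\mathbf{q}|),w}^{\phi,\psi}\partial^{p+\mathbf{q}}f-\partial^{p+\mathbf{q}}f\Big)+R_w,
\end{equation*}
where $R_w:=\sum_{\substack{0\le a\le p,\ \mathbf{0}\le\mathbf{b}\le\mathbf{q}\\ a+|\mathbf{b}|\ge 1}}\binom{p}{a}\binom{\mathbf{q}}{\mathbf{b}}\,w^{a+|\mathbf{b}|}\,K_{n-(p+|\mathbf{q}|-a-|\mathbf{b}|),w}^{\phi^{(a)},\psi^{(\mathbf{b})}}\partial^{p+\mathbf{q}-a-\mathbf{b}}f$. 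The two pieces are estimated separately and combined by the triangle inequality.

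For the first piece, $\partial^{p+\mathbf{q}}f\in C^{\,n-(p+|\mathbf{q}|)}(\mathbb{R}\times\mathbb{R}^d)$ and, by Remark~\ref{rmk1}, $M_{n-(p+|\mathbf{q}|)+1}(\phi),M_{n-(p+|\mathbf{q}|)+1}(\psi_i)<\infty$; applying Theorem~\ref{thm2}(ii) with $n$ replaced by $n-(p+|\mathbf{q}|)$, $f$ replaced by $\partial^{p+\mathbf{q}}f$, and $\delta=\gamma_i=1/w$, yields exactly the $a=0,\ \mathbf{b}=\mathbf{0}$ summand of the claimed right-hand side. (When $p+|\mathbf{q}|=n$ the first piece is the plain Kantorovich operator, for which the same modulus-of-continuity estimate holds by the argument of Theorem~\ref{thm2}(ii) with $n=0$.)

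For a term of $R_w$ with $a+|\mathbf{b}|\ge 1$, put $m=n-(p+|\mathbf{q}|-a-|\mathbf{b}|)\ge 1$ and $g=\partial^{p+\mathbf{q}-a-\mathbf{b}}f$, expand $g(x,\mathbf{y})$ by Taylor's formula about $(u,\mathbf{v})$ up to order $m-1$ with Lagrange remainder of order $m$, and use $\sum_{k}\phi^{(a)}(wx-k)=0$ (when $a\ge 1$) or $\sum_{m_i}\psi_i^{(b_i)}(wy_i-m_i)=0$ (when some $b_i\ge 1$) to annihilate the constant Taylor term $g(x,\mathbf{y})$ --- exactly as in the proof of Theorem~\ref{thm4}. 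What remains is a sum over $l+|\mathbf{j}|=m$ of the differences $\partial^{l+\mathbf{j}+p+\mathbf{q}-a-\mathbf{b}}f(u,\mathbf{v})-\partial^{l+\mathbf{j}+p+\mathbf{q}-a-\mathbf{b}}f(\zeta_{u,x},\zeta_{\mathbf{v},\mathbf{y}})$, weighted by $(x-u)^l(\mathbf{y}-\mathbf{v})^{\mathbf{j}}$. Now invoke property (ii) of the modulus of continuity, bound $|u-\zeta_{u,x}|\le|x-u|$ and $|v_i-\zeta_{v_i,y_i}|\le|y_i-v_i|$, and repeat verbatim the computation in the proof of Theorem~\ref{thm2}(ii): split the powers by Jensen's inequality on each interval $I_k^w,I_{\mathbf{m}}^w$, replace $|x-k/w|$ by $|wx-k|/w$, and choose $\delta=\gamma_i=1/w$. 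The explicit $w^{a+|\mathbf{b}|}$ then cancels the factor $w^{-(l+|\mathbf{j}|)}=w^{-(n-(p+|\mathbf{q}|)+a+|\mathbf{b}|)}$ coming from the rescaling, leaving the common factor $w^{-(n-(p+|\mathbf{q}|))}$; the combinatorial constant becomes $2^{\,n-(p+|\mathbf{q}|-a-|\mathbf{b}|)-(d+1)}$ and the moment sums become $M_{l+1}(\phi^{(a)})+2M_l(\phi^{(a)})+M_1(\phi^{(a)})+2M_0(\phi^{(a)})$ together with the analogous factor in $\psi_i^{(b_i)}$. Summing over $a,\mathbf{b}$ and adding the first piece gives the stated inequality.

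The analytic content is already furnished by Lemma~\ref{lemma1} and Theorem~\ref{thm2}(ii); the main obstacle is the book-keeping --- carrying the several multi-indices so that the powers of $w$ cancel precisely as above, and checking that all the discrete absolute moments of the \emph{derivative} kernels, namely $M_{l+1}(\phi^{(a)})$ and $M_{j_i+1}(\psi_i^{(b_i)})$, that appear in the estimate are finite. This last point is exactly where the uniform-convergence hypotheses on the series of derivatives, together with Remark~\ref{rmk1} and $M_{n+1}(\phi),M_{n+1}(\psi_i)<\infty$, are used.
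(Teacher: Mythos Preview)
Your proposal is correct and follows essentially the same route as the paper's proof: decompose via Lemma~\ref{lemma1}, handle the $a=0,\ \mathbf{b}=\mathbf{0}$ summand by Theorem~\ref{thm2}(ii), and for $a+|\mathbf{b}|\ge 1$ rewrite the operator via Taylor's formula so that the $\partial^{p+\mathbf{q}-a-\mathbf{b}}f(x,\mathbf{y})$ term is annihilated by $\sum_k\phi^{(a)}(wx-k)=0$ or $\sum_{m_i}\psi_i^{(b_i)}(wy_i-m_i)=0$ (this is precisely the identity labelled (\ref{eq:6}) in the paper), then apply property~(ii) of $\omega$, Jensen's inequality, and the choice $\delta=\gamma_i=1/w$. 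The paper simply cites (\ref{eq:6}) from the proof of Theorem~\ref{thm4} rather than re-deriving it, but the content is identical.
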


\begin{proof}

    Taking $a+|\mathbf{b}|=0$ in (\ref{eq:3}) and using  part (ii) of Theorem \ref{thm2}, we can write 
    \myeq{ 
        &\|K_{n-(p+|\mathbf{q}|),w}^{\phi,\psi}\partial^{p+\mathbf{q}}f-\partial^{p+\mathbf{q}}f\|_{\infty}\\
        &\leq\dfrac{2^{n-(p+|\mathbf{q}|)-(d+1)}}{w^{n-(p+|\mathbf{q}|)}} \sum_{l + |\mathbf{j}| = n-(p+|\mathbf{q}|)} \dfrac{1}{l! \mathbf{j}!} \omega(\partial^{l+\mathbf{j}+p+\mathbf{q}}f,\delta,\gamma_{1},...,\gamma_{d})  \\ &
         \times \left(M_{l+1}(\phi)+2M_{l}(\phi)+M_{1}(\phi)+2 M_{0}(\phi)\right) \prod_{i=1}^{d} \left(M_{j_{i}+1}(\psi_{i})+2M_{j_{i}}(\psi_{i})+
           M_{1}(\psi_{i}) + 2M_{0}(\psi_{i})\right).
      }
    Consider $a+|\mathbf{b}|\geq1$ in (\ref{eq:3}), using (\ref{eq:6}) and property (ii) of modulus of continuity, we obtain
  \begin{eqnarray*}
        &&|w^{a+|\mathbf{b}|}K_{n-(p+|\mathbf{q}|-a-|\mathbf{b}|),w}^{\phi^{(a)},\psi^{(\mathbf{b})}} \partial^{p+\mathbf{q}-a-\mathbf{b}}f|\\ 
      && \leq w^{a+|\mathbf{b}|} \displaystyle\sum_{k\in \mathbb{Z}}\sum_{\mathbf{m} \in\mathbb{Z}^{d}}  |\phi^{(a)}(wx-k)|\  \prod_{i=1}^{d}|\psi_{i}^{(b_{i})}(wy_{i}-m_{i})| w^{d+1}\int_{I_{k}^{w}}\int_{I_{\mathbf{m}}^{w}} \sum_{l+|\bold{j}|= n-(p+|\mathbf{q}|-a-|\mathbf{b}|)}\dfrac{1}{l! \bold{j}!}  \\
         &&  \times
          \left| \dfrac{\partial^{l+|\bold{j} |+p+|\mathbf{q}|-a-|\mathbf{b}|}f(u,\bold{v})}{\partial u^{l+p-a}\partial \bold{v}^{\bold{j}+\mathbf{q}-\mathbf{b}}} -\dfrac{\partial^{l+|\bold{j} |+p+|\mathbf{q}|-a-|\mathbf{b}|}f(\zeta_{u,x},\zeta_{\bold{v},\bold{y}})}{\partial u^{l+p-a}\partial \bold{v}^{\bold{j}+\mathbf{q}-\mathbf{b}}}\right|   {|x-u|}^{l}{|\bold{y}-\bold{v}|^{\bold{j}}}\  du \ d\bold{v}\\
          &&\leq  w^{a+|\mathbf{b}|} \displaystyle\sum_{k\in \mathbb{Z}}\sum_{\mathbf{m} \in\mathbb{Z}^{d}}  |\phi^{(a)}(wx-k)|\  \prod_{i=1}^{d}|\psi_{i}^{(b_{i})}(wy_{i}-m_{i})| \sum_{l+|\bold{j}|= n-(p+|\mathbf{q}|-a-|\mathbf{b}|)}\dfrac{1}{l! \bold{j}!}  \\
     && \times \omega(\partial^{l+\mathbf{j}+p+\mathbf{q}-a-\mathbf{b}}f,\delta,\gamma_{1},...,\gamma_{d}) w^{d+1} \int_{I_{k}^{w}}\int_{I_{\mathbf{m}}^{w}}
     \left(1+\dfrac{|u-\zeta_{u,x}|}{\delta}\right)\displaystyle \prod_{i=1}^{d}\left(1+\dfrac{|v_{i}-\zeta_{{v}_{i},{y}_{i}}|}{\gamma_{i}}\right)\\
     && \times  {|x-u|}^{l}{|\bold{y}-\bold{v}|^{\bold{j}}}\  du \ d\bold{v}.
  \end{eqnarray*}
     Using the similar analysis from part (ii) of Theorem \ref{thm2} and in view of Jensen's inequality, moment condition, we can write
    \begin{eqnarray*}
         &&|w^{a+|\mathbf{b}|}K_{n-(p+|\mathbf{q}|-a-|\mathbf{b}|),w}^{\phi^{(a)},\psi^{(\mathbf{b})}} \partial^{p+\mathbf{q}-a-\mathbf{b}}f|\\
     && \leq\dfrac{2^{n-(p+|\mathbf{q}|-a-|\mathbf{b}|)-(d+1)}}{ w^{n-(p+|\mathbf{q}|)}} \sum_{l+|\bold{j}|= n-(p+|\mathbf{q}|-a-|\mathbf{b}|)}\dfrac{1}{l! \bold{j}!} \omega(\partial^{l+\mathbf{j}+p+\mathbf{q}-a-\mathbf{b}}f,\delta,\gamma_{1},...,\gamma_{d}) \left(\dfrac{M_{l+1}(\phi^{(a)})}{\delta w}  \right. \\
         && \left.+M_{l}(\phi^{(a)}) +
   \dfrac{M_{l}(\phi^{(a)})}{\delta w}+\dfrac{M_{1}(\phi^{(a)})}{\delta w} 
  \dfrac{M_{0}(\phi^{(a)})}{\delta w } + M_{0}(\phi^{(a)})\right) \prod_{i=1}^{d} \left(\dfrac{M_{j_{i}+1}(\psi_{i}^{(b_{i})})}{\gamma_{i} w}+M_{j_{i}}(\psi_{i}^{(b_{i})})\right.\\
   && \left. +
   \dfrac{M_{j_{i}}(\psi_{i}^{(b_{i})})}{\gamma_{i} w}+\dfrac{M_{1}(\psi_{i}^{(b_{i})})}{\gamma_{i} w}+\dfrac{M_{0}(\psi_{i}^{(b_{i})})}{\gamma_{i} w } + M_{0}(\psi_{i}^{(b_{i})})\right).
    \end{eqnarray*}
     Choosing $\delta=\dfrac{1}{w}$ and $\gamma_{i}=\dfrac{1}{w}, \ i=1,...,d,$ we obtain 
    \myeq{ 
     &|w^{a+|\mathbf{b}|}K_{n-(p+|\mathbf{q}|-a-|\mathbf{b}|),w}^{\phi^{(a)},\psi^{(\mathbf{b})}} \partial^{p+\mathbf{q}-a-\mathbf{b}}f|\\
     & \leq\dfrac{2^{n-(p+|\mathbf{q}|-a-|\mathbf{b}|)-(d+1)}}{ w^{n-(p+|\mathbf{q}|)}} \sum_{l+|\bold{j}|= n-(p+|\mathbf{q}|-a-|\mathbf{b}|)}\dfrac{1}{l! \bold{j}!} \omega\left(\partial^{l+\mathbf{j}+p+\mathbf{q}-a-\mathbf{b}}f,\dfrac{1}{w},\dfrac{1}{w},...,\dfrac{1}{w}\right) \\ 
         &\times \left(M_{l+1}(\phi^{(a)})  +2M_{l}(\phi^{(a)})+M_{1}(\phi^{(a)})+ 
       2 M_{0}(\phi^{(a)})\right) \prod_{i=1}^{d} \left(M_{j_{i}+1}(\psi_{i}^{(b_{i})})+2M_{j_{i}}(\psi_{i}^{(b_{i})})\right.\\
       & \left.
      +M_{1}(\psi_{i}^{(b_{i})})+2 M_{0}(\psi_{i}^{(b_{i})})\right).
       }
     Thus, using the estimate (\ref{eq:3}), (\ref{eq:8}) and (\ref{eq:9}), we get
     \begin{eqnarray*}
         &&\|\partial^{p+\mathbf{q}}K_{n,w}^{\phi,\psi}f-\partial^{p+\mathbf{q}}f\|_{\infty}\\
         &&\leq \sum_{a=0}^{p} \sum_{\mathbf{b}=0}^{\mathbf{q}} \binom{p}{a}\binom{\mathbf{q}}{\mathbf{b}} \dfrac{2^{n-(p+|\mathbf{q}|-a-|\mathbf{b}|)-(d+1)}}{ w^{n-(p+|\mathbf{q}|)}} \sum_{l+|\bold{j}|= n-(p+|\mathbf{q}|-a-|\mathbf{b}|)}\dfrac{1}{l! \bold{j}!}   \\
         && \times \omega\left(\partial^{l+\mathbf{j}+p+\mathbf{q}-a-\mathbf{b}}f,\dfrac{1}{w},\dfrac{1}{w},...,\dfrac{1}{w}\right)\left(M_{l+1}(\phi^{(a)})  +2M_{l}(\phi^{(a)})+M_{1}(\phi^{(a)})+ 
  2 M_{0}(\phi^{(a)})\right)\\
   && \times \prod_{i=1}^{d} \left(M_{j_{i}+1}(\psi_{i}^{(b_{i})})+2M_{j_{i}}(\psi_{i}^{(b_{i})})
   +M_{1}(\psi_{i}^{(b_{i})})+2 M_{0}(\psi_{i}^{(b_{i})})\right)\\
    &&\leq \dfrac{1}{w^{n-(p+|\mathbf{q}|)}}\sum_{a=0}^{p} \sum_{\mathbf{b}=0}^{\mathbf{q}} \binom{p}{a}\binom{\mathbf{q}}{\mathbf{b}} 2^{n-(p+|\mathbf{q}|-a-|\mathbf{b}|)-(d+1)} \sum_{l+|\bold{j}|= n-(p+|\mathbf{q}|-a-|\mathbf{b}|)}\dfrac{1}{l! \bold{j}!}   \\
         && \times \omega\left(\partial^{l+\mathbf{j}+p+\mathbf{q}-a-\mathbf{b}}f,\dfrac{1}{w},\dfrac{1}{w},...,\dfrac{1}{w}\right)\left(M_{l+1}(\phi^{(a)})  +2M_{l}(\phi^{(a)})+M_{1}(\phi^{(a)})+ 
  2 M_{0}(\phi^{(a)})\right)\\
   && \times\prod_{i=1}^{d} \left(M_{j_{i}+1}(\psi_{i}^{(b_{i})})+2M_{j_{i}}(\psi_{i}^{(b_{i})})
   +M_{1}(\psi_{i}^{(b_{i})})+2 M_{0}(\psi_{i}^{(b_{i})})\right).
     \end{eqnarray*}
  Hence, we get the desired estimate. 
\end{proof}

 \begin{thm}\label{thm6}
 Let $\phi, \psi_{i} \in C_{b}^{n}(\mathbb{R} ), i=1,...,d $ be a kernel of order $n+1$ with constant discrete moment such that for every $l=1,...,n,$ the series $$\displaystyle\sum_{k \in \mathbb{Z}} |u-k|^{n+1} |\phi^{(l)}(u-k)|\ and \ \sum_{k \in \mathbb{Z}} |u-k|^{n+1} |\psi_{i}^{(l)}(u-k)|,\ \ i=1,...,d$$ are uniformly convergent for $u\in \mathbb{R}.$ Suppose that $p+|\mathbf{q}|\leq n$ and $f\in C_{b}^{n+1}(\mathbb{R}\times \mathbb{R}^{d}).$ Then, we have\newpage
    \begin{eqnarray*}
          &&  \displaystyle\lim_{w\to \infty} w^{n-(p+|\mathbf{q}|)+1} (\partial^{p+\mathbf{q}}K_{n,w}^{\phi,\psi}f(x,\mathbf{y})-\partial^{p+\mathbf{q}}f(x,\mathbf{y}))\\
    &&=\sum_{a=0}^{p} \sum_{\mathbf{b}=0}^{\mathbf{q}} \binom{p}{a}\binom{\mathbf{q}}{\mathbf{b}}\displaystyle \sum_{l + |\mathbf{j}| = n-(p+|\mathbf{q}|-a-|\mathbf{b}|)+1} \dfrac{(-1)^{n-(p+|\mathbf{q}|-a-|\mathbf{b}|)}}{l! \mathbf{j}!} \sum_{c=0}^{l} \sum_{\mathbf{d}=0}^{\mathbf{j}} \binom{l}{c}\binom{\mathbf{j}}{\mathbf{d}}\\
    &&\times\dfrac{m_{c}(\phi^{(a)})}{l-c+1}\prod_{i=1}^{d} \dfrac{m_{d_{i}}(\psi_{i}^{(b_{i})})}{j_{i}-{d_{i}+1}} \partial^{l + \mathbf{j}+p+\mathbf{q}-a-\mathbf{b}} f(x, \mathbf{y}),
    \end{eqnarray*}  
     for any $(x,\mathbf{y})\in \mathbb{R}\times \mathbb{R}^{d}.$
 \end{thm}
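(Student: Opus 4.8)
The plan is to reduce Theorem~\ref{thm6} to the Voronovskaja-type formula of Theorem~\ref{thm3}, one summand at a time, by first differentiating $K_{n,w}^{\phi,\psi}f$ through Lemma~\ref{lemma1}. First I would apply Lemma~\ref{lemma1} to write
\[
\partial^{p+\mathbf{q}}K_{n,w}^{\phi,\psi}f=\sum_{a=0}^{p}\sum_{\mathbf{b}=0}^{\mathbf{q}}\binom{p}{a}\binom{\mathbf{q}}{\mathbf{b}}\,w^{a+|\mathbf{b}|}\,K_{n-(p+|\mathbf{q}|-a-|\mathbf{b}|),w}^{\phi^{(a)},\psi^{(\mathbf{b})}}\partial^{p+\mathbf{q}-a-\mathbf{b}}f,
\]
isolate the term $a=0,\ \mathbf{b}=\mathbf{0}$ (which equals $K_{n-(p+|\mathbf{q}|),w}^{\phi,\psi}\partial^{p+\mathbf{q}}f$), subtract $\partial^{p+\mathbf{q}}f$, and multiply by $w^{n-(p+|\mathbf{q}|)+1}$. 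Since $w^{n-(p+|\mathbf{q}|)+1}\,w^{a+|\mathbf{b}|}=w^{(n-(p+|\mathbf{q}|-a-|\mathbf{b}|))+1}$, it then suffices to compute the limit of each of the finitely many summands.

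For the term $a=0,\ \mathbf{b}=\mathbf{0}$ the plan is to invoke Theorem~\ref{thm3} with $n$ replaced by $n-(p+|\mathbf{q}|)$ and $f$ replaced by $\partial^{p+\mathbf{q}}f$; this is legitimate because $f\in C_b^{n+1}$ and $p+|\mathbf{q}|\leq n$ give $\partial^{p+\mathbf{q}}f\in C_b^{(n-(p+|\mathbf{q}|))+1}$, and $\phi,\psi_i$ are kernels of order $n+1\geq(n-(p+|\mathbf{q}|))+1$ satisfying the moment hypotheses required. The resulting limit,
\begin{align*}
&\lim_{w\to\infty}w^{n-(p+|\mathbf{q}|)+1}\left(K_{n-(p+|\mathbf{q}|),w}^{\phi,\psi}\partial^{p+\mathbf{q}}f-\partial^{p+\mathbf{q}}f\right)\\
&\quad=\sum_{l+|\mathbf{j}|=n-(p+|\mathbf{q}|)+1}\frac{(-1)^{n-(p+|\mathbf{q}|)}}{l!\mathbf{j}!}\sum_{c=0}^{l}\sum_{\mathbf{d}=0}^{\mathbf{j}}\binom{l}{c}\binom{\mathbf{j}}{\mathbf{d}}\frac{m_c(\phi)}{l-c+1}\prod_{i=1}^{d}\frac{m_{d_i}(\psi_i)}{j_i-d_i+1}\partial^{l+\mathbf{j}+p+\mathbf{q}}f(x,\mathbf{y}),
\end{align*}
is exactly the $(a,\mathbf{b})=(0,\mathbf{0})$ summand of the asserted formula.

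The substantive step is the remaining terms, those with $a+|\mathbf{b}|\geq1$. Fix such a pair, put $N:=n-(p+|\mathbf{q}|-a-|\mathbf{b}|)$ and $g:=\partial^{p+\mathbf{q}-a-\mathbf{b}}f\in C_b^{N+1}$, and analyse $w^{N+1}K_{N,w}^{\phi^{(a)},\psi^{(\mathbf{b})}}g$. The key point is that when $a+|\mathbf{b}|\geq1$ at least one of $a,b_1,\dots,b_d$ is $\geq1$, so by differentiating $\sum_k\phi(u-k)=1$ (respectively $\sum_{m_i}\psi_i(v_i-m_i)=1$) one gets $\sum_k\phi^{(a)}(u-k)=0$ or $\sum_{m_i}\psi_i^{(b_i)}(v_i-m_i)=0$, whence $\sum_{k\in\mathbb{Z}}\sum_{\mathbf{m}\in\mathbb{Z}^d}\phi^{(a)}(wx-k)\prod_{i=1}^{d}\psi_i^{(b_i)}(wy_i-m_i)=0$. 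This lets me subtract $g(x,\mathbf{y})$ at no cost, and expanding $g(x,\mathbf{y})$ by Taylor's formula of degree $N$ about $(u,\mathbf{v})$ with remainder of order $N+1$ gives
\begin{align*}
K_{N,w}^{\phi^{(a)},\psi^{(\mathbf{b})}}g(x,\mathbf{y})&=-\sum_{k\in\mathbb{Z}}\sum_{\mathbf{m}\in\mathbb{Z}^d}\phi^{(a)}(wx-k)\prod_{i=1}^{d}\psi_i^{(b_i)}(wy_i-m_i)\\
&\quad\times w^{d+1}\int_{I_k^w}\int_{I_{\mathbf{m}}^w}\sum_{l+|\mathbf{j}|=N+1}\frac{1}{l!\mathbf{j}!}\partial^{l+\mathbf{j}}g(\zeta_{u,x},\zeta_{\mathbf{v},\mathbf{y}})(x-u)^l(\mathbf{y}-\mathbf{v})^{\mathbf{j}}\,du\,d\mathbf{v},
\end{align*}
which has exactly the shape of the identity opening the proof of Theorem~\ref{thm3}, with $(\phi,\psi_i,n,f)$ replaced by $(\phi^{(a)},\psi_i^{(b_i)},N,g)$. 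I then need the discrete moments $m_c(\phi^{(a)},u)$, $m_{d_i}(\psi_i^{(b_i)},u)$ to be constant and the corresponding absolute moments finite; I would obtain this by differentiating $m_\alpha(\chi,u)=\sum_k(k-u)^\alpha\chi(u-k)$ repeatedly and using that $m_r(\phi,u)$, $m_r(\psi_i,u)$ are constant for $0\leq r\leq n+1$, which yields $m_c(\phi^{(a)})=\frac{c!}{(c-a)!}m_{c-a}(\phi)$ for $c\geq a$ and $0$ for $c<a$ (and likewise for $\psi_i^{(b_i)}$), the absolute convergence being supplied by the hypothesised uniform convergence of $\sum_k|u-k|^{n+1}|\phi^{(l)}(u-k)|$ and $\sum_k|u-k|^{n+1}|\psi_i^{(l)}(u-k)|$ (note $N+1\leq n+1$, $a\leq p\leq n$, $b_i\leq q_i\leq n$). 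With these facts, the remainder of the proof of Theorem~\ref{thm3} can be repeated line by line — binomially splitting $(x-u)^l$ and $(\mathbf{y}-\mathbf{v})^{\mathbf{j}}$ about $k/w$, $\mathbf{m}/w$, factoring out $w^{N+1}$, decomposing the double series according to whether $\|(x,\mathbf{y})-(k/w,\mathbf{m}/w)\|_\infty\leq\delta_{l,\mathbf{j}}/2$, and using uniform continuity of $\partial^{l+\mathbf{j}}g$ together with Remark~\ref{rmk1} for the two pieces — to obtain
\begin{align*}
\lim_{w\to\infty}w^{N+1}K_{N,w}^{\phi^{(a)},\psi^{(\mathbf{b})}}g(x,\mathbf{y})&=\sum_{l+|\mathbf{j}|=N+1}\frac{(-1)^{N}}{l!\mathbf{j}!}\sum_{c=0}^{l}\sum_{\mathbf{d}=0}^{\mathbf{j}}\binom{l}{c}\binom{\mathbf{j}}{\mathbf{d}}\\
&\quad\times\frac{m_c(\phi^{(a)})}{l-c+1}\prod_{i=1}^{d}\frac{m_{d_i}(\psi_i^{(b_i)})}{j_i-d_i+1}\partial^{l+\mathbf{j}+p+\mathbf{q}-a-\mathbf{b}}f(x,\mathbf{y}),
\end{align*}
i.e.\ the $(a,\mathbf{b})$ summand of the target. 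Finally, adding the contributions of all pairs $(a,\mathbf{b})$, each weighted by $\binom{p}{a}\binom{\mathbf{q}}{\mathbf{b}}$, gives the stated formula.

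I expect the main obstacle to be precisely the passage to the ``derivative kernels'' $\phi^{(a)}$, $\psi_i^{(b_i)}$: one has to check both that the subtraction of $g(x,\mathbf{y})$ is still admissible — which is where the hypothesis $a+|\mathbf{b}|\geq1$ forces the zeroth discrete moment to vanish — and that their higher discrete moments remain constant and absolutely convergent, so that the Voronovskaja computation of Theorem~\ref{thm3} transfers without modification. Everything after that is bookkeeping of indices and a line-by-line repetition of the estimates already carried out in the proofs of Theorems~\ref{thm2}(i) and \ref{thm3}.
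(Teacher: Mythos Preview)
Your proposal is correct and follows essentially the same route as the paper: apply Lemma~\ref{lemma1}, handle the $(a,\mathbf{b})=(0,\mathbf{0})$ term by invoking Theorem~\ref{thm3} with lowered order, and for $a+|\mathbf{b}|\geq1$ exploit the vanishing zeroth moment of the derivative kernels to subtract $g(x,\mathbf{y})$ freely, then replay the Taylor/binomial/splitting argument of Theorem~\ref{thm3}. The only addition you make is the explicit verification that the discrete moments $m_c(\phi^{(a)})$ are constant via the recursion $m_c(\phi^{(a)})=\frac{c!}{(c-a)!}m_{c-a}(\phi)$, a point the paper takes for granted.
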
  
 
 \begin{proof}
     Taking $a+|\mathbf{b}|=0$ in (\ref{eq:3}) and using Theorem \ref{thm3}, we can write  
     \myeq{ 
          &  \displaystyle\lim_{w\to \infty} w^{n-(p+|\mathbf{q}|)+1} (K_{n-(p+|\mathbf{q}|),w}^{\phi,\psi}\partial^{p+\mathbf{q}}f(x,\mathbf{y})-\partial^{p+\mathbf{q}}f(x,\mathbf{y}))\\
    &=\displaystyle \sum_{l + |\mathbf{j}| = n-(p+|\mathbf{q}|)+1} \dfrac{(-1)^{n-(p+|\mathbf{q}|)}}{l! \mathbf{j}!} \sum_{c=0}^{l} \sum_{\mathbf{d}=0}^{\mathbf{j}} \binom{l}{c}\binom{\mathbf{j}}{\mathbf{d}}\dfrac{m_{c}(\phi)}{l-c+1}\prod_{i=1}^{d} \dfrac{m_{d_{i}}(\psi_{i})}{j_{i}-{d_{i}+1}} \partial^{l + \mathbf{j}+p+\mathbf{q}} f(x, \mathbf{y}).
       }
     Taking $a+|\mathbf{b}|\geq1$ in (\ref{eq:3}) and using Taylor's formula, we can write
     \begin{eqnarray*}
          &&w^{a+|\mathbf{b}|}K_{n-(p+|\mathbf{q}|-a-|\mathbf{b}|),w}^{\phi^{(a)},\psi^{(\mathbf{b})}} \partial^{p+\mathbf{q}-a-\mathbf{b}}f(x,\mathbf{y}) \\
         &&=w^{a+|\mathbf{b}|} \displaystyle\sum_{k\in \mathbb{Z}}\sum_{\mathbf{m} \in\mathbb{Z}^{d}}  \phi^{(a)}(wx-k)\  \prod_{i=1}^{d}\psi_{i}^{(b_{i})}(wy_{i}-m_{i})\\
         &&\times\left(\sum_{l+|\bold{j}|\leq n-(p+|\mathbf{q}|-a-|\mathbf{b}|)}\dfrac{1}{l! \bold{j}!}w^{d+1}\int_{I_{k}^{w}}\int_{I_{\mathbf{m}}^{w}}\dfrac{\partial^{l+|\bold{j} |+p+|\mathbf{q}|-a-|\mathbf{b}|}f(u,\bold{v})}{\partial u^{l+p-a}\partial \bold{v}^{\bold{j}+\mathbf{q}-\mathbf{b}}}{(x-u)}^{l}{(\bold{y}-\bold{v})^{\bold{j}}}\  du \ d\bold{v}\right)\\
          &&=w^{a+|\mathbf{b}|} \displaystyle\sum_{k\in \mathbb{Z}}\sum_{\mathbf{m} \in\mathbb{Z}^{d}}  \phi^{(a)}(wx-k)\  \prod_{i=1}^{d}\psi_{i}^{(b_{i})}(wy_{i}-m_{i}) w^{d+1}\int_{I_{k}^{w}}\int_{I_{\mathbf{m}}^{w}} \left( \dfrac{\partial^{p+|\mathbf{q}|-a-|\mathbf{b}|}f(x,\bold{y})}{\partial x^{p-a}\partial \bold{y}^{\mathbf{q}-\mathbf{b}}}  \right.\\
         && \left.- \sum_{l+|\bold{j}|= n-(p+|\mathbf{q}|-a-|\mathbf{b}|)+1}\dfrac{1}{l! \bold{j}!} 
          \dfrac{\partial^{l+|\bold{j} |+p+|\mathbf{q}|-a-|\mathbf{b}|}f(\zeta_{u,x},\zeta_{\bold{v},\bold{y}})}{\partial u^{l+p-a}\partial \bold{v}^{\bold{j}+\mathbf{q}-\mathbf{b}}} {(x-u)}^{l}{(\bold{y}-\bold{v})^{\bold{j}}} \right)\  du \ d\bold{v}\\
           &&=w^{a+|\mathbf{b}|} \displaystyle\sum_{k\in \mathbb{Z}}\sum_{\mathbf{m} \in\mathbb{Z}^{d}}  \phi^{(a)}(wx-k)\  \prod_{i=1}^{d}\psi_{i}^{(b_{i})}(wy_{i}-m_{i}) \sum_{l+|\bold{j}|= n-(p+|\mathbf{q}|-a-|\mathbf{b}|)+1} \\
           &&\times\dfrac{(-1)^{n-(p+|\mathbf{q}|-a-|\mathbf{b}|)}}{l! \bold{j}!}  w^{d+1}\int_{I_{k}^{w}}\int_{I_{\mathbf{m}}^{w}}
          \dfrac{\partial^{l+|\bold{j} |+p+|\mathbf{q}|-a-|\mathbf{b}|}f(\zeta_{u,x},\zeta_{\bold{v},\bold{y}})}{\partial u^{l+p-a}\partial \bold{v}^{\bold{j}+\mathbf{q}-\mathbf{b}}} \\
        &&\times  \sum_{c=0}^{l} \sum_{\mathbf{d}=0}^{\mathbf{j}} \binom{l}{c}\binom{\mathbf{j}}{\mathbf{d}} \left(u-\dfrac{k}{w}\right)^{l-c} \left(\dfrac{k}{w}-x\right)^{c} \left(\mathbf{v}-\dfrac{\mathbf{m}}{w}\right)^{\mathbf{j}-\mathbf{d}} \left(\dfrac{\mathbf{m}}{w}-\mathbf{y}\right)^{\mathbf{d}} \  du \ d\bold{v}
     \end{eqnarray*}
      \begin{eqnarray*}
      &&=w^{a+|\mathbf{b}|} \displaystyle\sum_{k\in \mathbb{Z}}\sum_{\mathbf{m} \in\mathbb{Z}^{d}}  \phi^{(a)}(wx-k)\  \prod_{i=1}^{d}\psi_{i}^{(b_{i})}(wy_{i}-m_{i}) \sum_{l+|\bold{j}|= n-(p+|\mathbf{q}|-a-|\mathbf{b}|)+1}  \\
          &&\times \dfrac{(-1)^{n-(p+|\mathbf{q}|-a-|\mathbf{b}|)}}{l! \bold{j}!} \sum_{c=0}^{l} \sum_{\mathbf{d}=0}^{\mathbf{j}} \binom{l}{c}\binom{\mathbf{j}}{\mathbf{d}}
          \left(\dfrac{k}{w}-x\right)^{c}\left(\dfrac{\mathbf{m}}{w}-\mathbf{y}\right)^{\mathbf{d}}\\
          && \times w^{d+1}\int_{I_{k}^{w}}\int_{I_{\mathbf{m}}^{w}}
          \dfrac{\partial^{l+|\bold{j} |+p+|\mathbf{q}|-a-|\mathbf{b}|}f(\zeta_{u,x},\zeta_{\bold{v},\bold{y}})}{\partial u^{l+p-a}\partial \bold{v}^{\bold{j}+\mathbf{q}-\mathbf{b}}}\left(u-\dfrac{k}{w}\right)^{l-c}
          \left(\mathbf{v}-\dfrac{\mathbf{m}}{w}\right)^{\mathbf{j}-\mathbf{d}} \  du \ d\bold{v}\\
            && =w^{a+|\mathbf{b}|}\sum_{l+|\bold{j}|= n-(p+|\mathbf{q}|-a-|\mathbf{b}|)+1}\dfrac{(-1)^{n-(p+|\mathbf{q}|-a-|\mathbf{b}|)}}{l! \bold{j}!}
           \sum_{c=0}^{l} \sum_{\mathbf{d}=0}^{\mathbf{j}} \binom{l}{c}\binom{\mathbf{j}}{\mathbf{d}}\displaystyle\sum_{k\in \mathbb{Z}}\sum_{\mathbf{m} \in\mathbb{Z}^{d}}  \\
           &&\times \phi^{(a)}(wx-k)\prod_{i=1}^{d}\psi_{i}^{(b_{i})}(wy_{i}-m_{i}) \left(\dfrac{k}{w}-x\right)^{c}\left(\dfrac{\mathbf{m}}{w}-\mathbf{y}\right)^{\mathbf{d}} w^{d+1}
         \\
          &&\times \int_{I_{k}^{w}}\int_{I_{\mathbf{m}}^{w}}\dfrac{\partial^{l+|\bold{j} |+p+|\mathbf{q}|-a-|\mathbf{b}|}f(\zeta_{u,x},\zeta_{\bold{v},\bold{y}})}{\partial u^{l+p-a}\partial \bold{v}^{\bold{j}+\mathbf{q}-\mathbf{b}}} \left(u-\dfrac{k}{w}\right)^{l-c}\left(\mathbf{v}-\dfrac{\mathbf{m}}{w}\right)^{\mathbf{j}-\mathbf{d}} \  du \ d\bold{v}.
         \end{eqnarray*}
        We now estimate 
        \begin{eqnarray*}
          && w^{n-(p+|\mathbf{q}|)+1}(w^{a+|\mathbf{b}|}K_{n-(p+|\mathbf{q}|-a-|\mathbf{b}|),w}^{\phi^{(a)},\psi^{(\mathbf{b})}} \partial^{p+\mathbf{q}-a-\mathbf{b}}f(x, \mathbf{y}) )-\displaystyle \sum_{l + |\mathbf{j}| = n-(p+|\mathbf{q}|-a-|\mathbf{b}|)+1} \\
    &&\times \dfrac{(-1)^{ n-(p+|\mathbf{q}|-a-|\mathbf{b}|)}}{l! \mathbf{j}!}\sum_{c=0}^{l} \sum_{\mathbf{d}=0}^{\mathbf{j}} \binom{l}{c}\binom{\mathbf{j}}{\mathbf{d}}\dfrac{m_{c}(\phi^{(a)})}{l-c+1}\prod_{i=1}^{d} \dfrac{m_{d_{i}}(\psi_{i}^{(b_{i})})}{j_{i}-{d_{i}+1}} \partial^{l + \mathbf{j}+p+\mathbf{q}-a-\mathbf{b}} f(x, \mathbf{y})\\
   &&=\sum_{l+|\bold{j}|= n-(p+|\mathbf{q}|-a-|\mathbf{b}|)+1}\dfrac{(-1)^{ n-(p+|\mathbf{q}|-a-|\mathbf{b}|)}}{l! \bold{j}!}
           \sum_{c=0}^{l} \sum_{\mathbf{d}=0}^{\mathbf{j}} \binom{l}{c}\binom{\mathbf{j}}{\mathbf{d}}\sum_{k \in \mathbb{Z}} \sum_{\mathbf{m} \in\mathbb{Z}^d}  \\
    &&\times \phi^{(a)}(wx - k)\prod_{i=1}^{d} \psi_{i}^{(b_{i})}(wy_{i}-m_{i})w^{d+1}\left(k-wx\right)^{c}\left(\mathbf{m}-w\mathbf{y}\right)^{\mathbf{d}}\int_{ I_{k}^{w}} \int_{ I_{\mathbf{m}}^{w}}\\
    &&\times\left(\partial^{l + \mathbf{j}+p+\mathbf{q}-a-\mathbf{b}} f(\zeta_{u,x}, \zeta_{\mathbf{v},\mathbf{y}})-\partial^{l + \mathbf{j}+p+\mathbf{q}-a-\mathbf{b}} f(x, \mathbf{y})\right) \left(wu-k\right)^{l-c} \left(w\mathbf{v}-\mathbf{m}\right)^{\mathbf{j}-\mathbf{d}}\, du \, d\mathbf{v}.
     \end{eqnarray*}
     Using the continuity of $\partial^{l + \mathbf{j}+p+\mathbf{q}-a-\mathbf{b}} f$ and moment conditions, we obtain
     \myeq{
         &\lim_{w\to\infty} w^{n-(p+|\mathbf{q}|)+1}(w^{a+|\mathbf{b}|}K_{n-(p+|\mathbf{q}|-a-|\mathbf{b}|),w}^{\phi^{(a)},\psi^{(\mathbf{b})}} \partial^{p+\mathbf{q}-a-\mathbf{b}}f(x, \mathbf{y}) )  \\ 
    &=\displaystyle \sum_{l + |\mathbf{j}| = n-(p+|\mathbf{q}|-a-|\mathbf{b}|)+1} \dfrac{(-1)^{n-(p+|\mathbf{q}|-a-|\mathbf{b}|)}}{l! \mathbf{j}!} \sum_{c=0}^{l} \sum_{\mathbf{d}=0}^{\mathbf{j}} \binom{l}{c}\binom{\mathbf{j}}{\mathbf{d}}\dfrac{m_{c}(\phi^{(a)})}{l-c+1}  \\ 
    &\times\prod_{i=1}^{d} \dfrac{m_{d_{i}}(\psi_{i}^{(b_{i})})}{j_{i}-{d_{i}+1}} \partial^{l + \mathbf{j}+p+\mathbf{q}-a-\mathbf{b}} f(x, \mathbf{y}).
     }
    Thus, combining the estimates (\ref{eq:3}), (\ref{eq:10}) and (\ref{eq:11}), we get
    \begin{eqnarray*}
          &&  \displaystyle\lim_{w\to \infty} w^{n-(p+|\mathbf{q}|)+1} (\partial^{p+\mathbf{q}}K_{n,w}^{\phi,\psi}f(x,\mathbf{y})-\partial^{p+\mathbf{q}}f(x,\mathbf{y}))\\
    &&=\sum_{a=0}^{p} \sum_{\mathbf{b}=0}^{\mathbf{q}} \binom{p}{a}\binom{\mathbf{q}}{\mathbf{b}}\displaystyle \sum_{l + |\mathbf{j}| = n-(p+|\mathbf{q}|-a-|\mathbf{b}|)+1} \dfrac{(-1)^{ n-(p+|\mathbf{q}|-a-|\mathbf{b}|)}}{l! \mathbf{j}!} \\
    &&\times\sum_{c=0}^{l} \sum_{\mathbf{d}=0}^{\mathbf{j}} \binom{l}{c}\binom{\mathbf{j}}{\mathbf{d}}\dfrac{m_{c}(\phi^{(a)})}{l-c+1}\prod_{i=1}^{d} \dfrac{m_{d_{i}}(\psi_{i}^{(b_{i})})}{j_{i}-{d_{i}+1}} \partial^{l + \mathbf{j}+p+\mathbf{q}-a-\mathbf{b}} f(x, \mathbf{y}).
    \end{eqnarray*}
    Thus, we get the required result.
 \end{proof}

\section{Implementation Results}
In this section, we consider the cardinal $B$-spline kernel and we show implementation results for differentiable function by these kernels.
For $n\in \mathbb{N},$ The cardinal $B$-spline of degree $n$ on $\mathbb{R}$ is defined by 
$$B_{n}(x)=(B_{0}*B_{0}*...*B_{0})(x), \ \ (n \ \text{convolutions}),$$
where $$B_{0}(x)=
\begin{cases} 
      1, & \text{if} \ |x|\leq \frac{1}{2}\\
      0, & \text{if}\ |x|>\frac{1}{2}.

   \end{cases}$$
\begin{example}
  We consider the function $$f(x,y)=\dfrac{(1+x)y}{1+x^{2}}, \ \ (x,y) \in [-2,2]\times[-2,2]$$  
  and its graph is shown in the figure \ref{Fig:1}. 
\begin{figure}[H]
  \centering
  \includegraphics[width=0.5\textwidth]{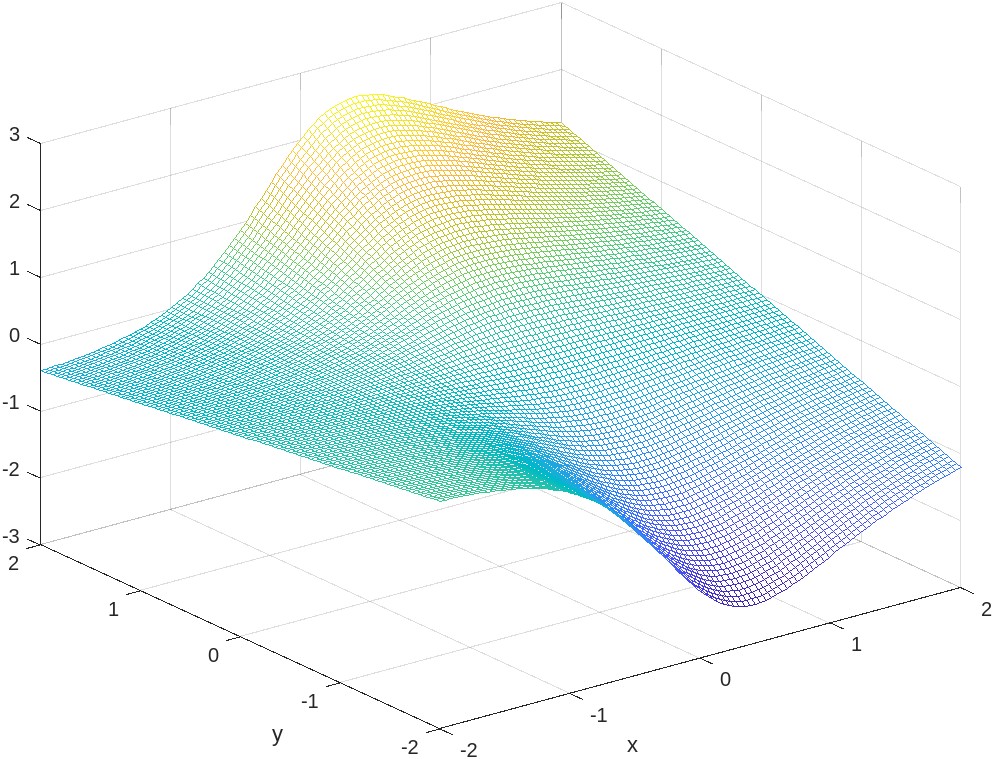}
  \caption{The function $f$}\label{Fig:1}
\end{figure}
 Now consider the following kernels:
$$\phi(x)=B_{2}(x), \ \psi(y)=B_{2}(y), \ \ x,y \in [-2,2].$$
Since $\phi$ and $\psi$ are compactly supported, they satisfy both conditions (i) and (ii) of the kernel.
 Plots of the approximations of f by $K_{n,w}^{\phi,\psi}f$ for values $n=0,1,2$  and for $w=7$  are shown in figure \ref{Fig:2}, figure \ref{Fig:3}, and figure \ref{Fig:4}, respectively.
 \begin{figure}[H]
  \centering
  \includegraphics[width=0.5\textwidth]{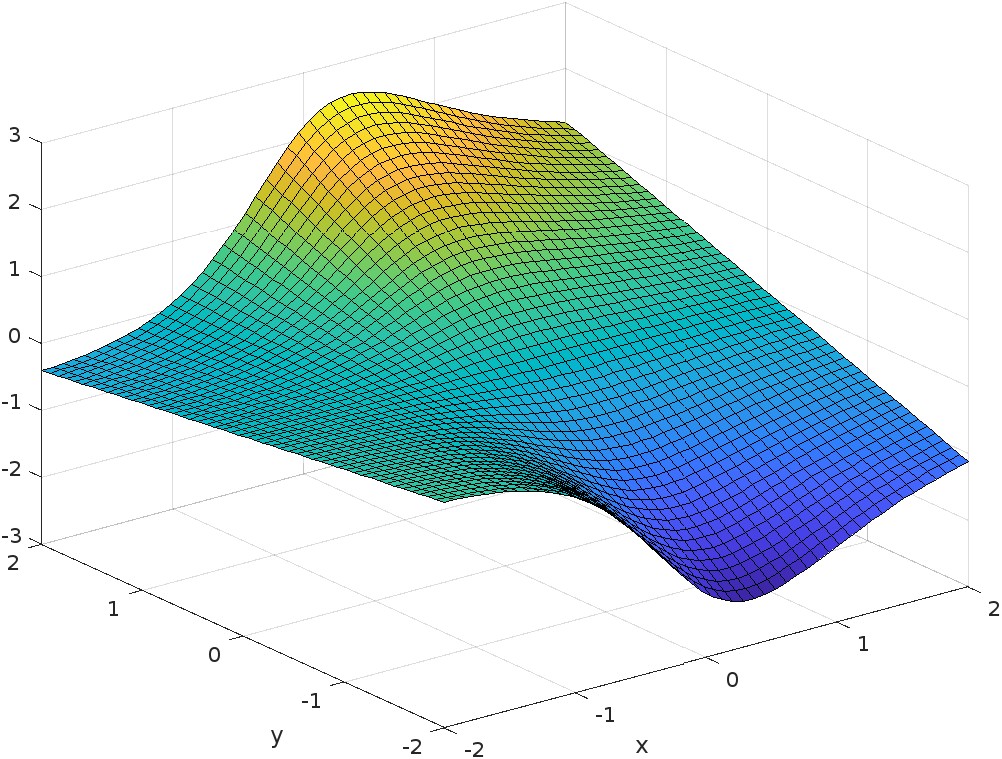}
  \caption{The approximation of $f$ by $K_{0,7}^{\phi,\psi}f$ }\label{Fig:2}
\end{figure}
\begin{figure}[H]
  \centering
  \includegraphics[width=0.5\textwidth]{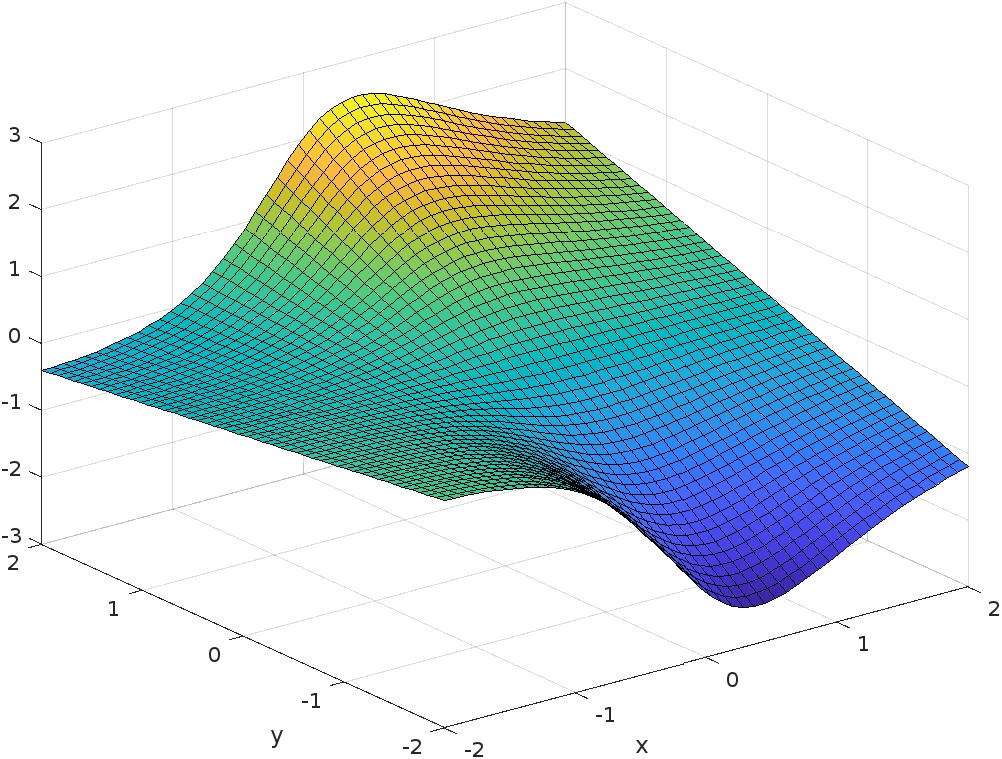}
  \caption{The approximation of $f$ by $K_{1,7}^{\phi,\psi}f$ }\label{Fig:3}
\end{figure}
\begin{figure}[H]
  \centering
  \includegraphics[width=0.5\textwidth]{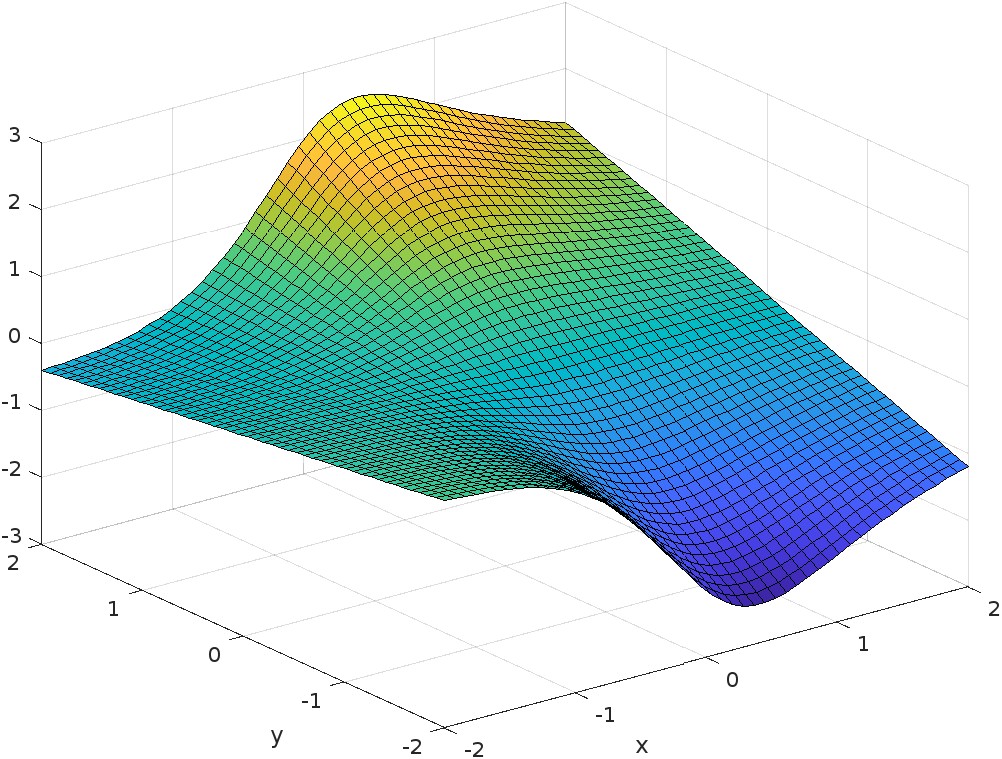}
  \caption{The approximation of $f$ by $K_{2,7}^{\phi,\psi}f$ }\label{Fig:4}
\end{figure}
   According to Theorem \ref{thm3}, increasing the order leads to a better approximation, which is reflected in the numerical errors.
 $$\|K_{0,7}^{\phi,\psi}f-f\|_{\infty}=0.2323, \ \|K_{1,7}^{\phi,\psi}f-f\|_{\infty}=0.0325, \ \|K_{2,7}^{\phi,\psi}f-f\|_{\infty}=0.0055.$$
 Figure \ref{Fig:5} illustrates the errors  $E_{n}(w)=\|K_{n,w}^{\phi,\psi}f-f\|_{\infty}$  as functions of $w$ for $n=1,2,$ along with the corresponding bounds $$T_{n}(w)=\dfrac{2^{n-1}}{w^{n+1}}\displaystyle \sum_{l+j= n+1}\dfrac{1}{l! j!}   \| \partial^{l+j }f\|_{\infty} (M_{l}(\phi)+M_{0}(\phi))   (M_{j}(\psi)+M_{0}(\psi)),$$ as stated in part (iii) of Theorem \ref{thm2}.
  Comparing the two figures, we observe that a faster convergence rate corresponds to a smaller value of $w$ required to achieve a given error.
 \begin{figure}[H]
  \centering
  \begin{subfigure}[b]{0.45\linewidth}
    \centering
    \includegraphics[width=\linewidth]{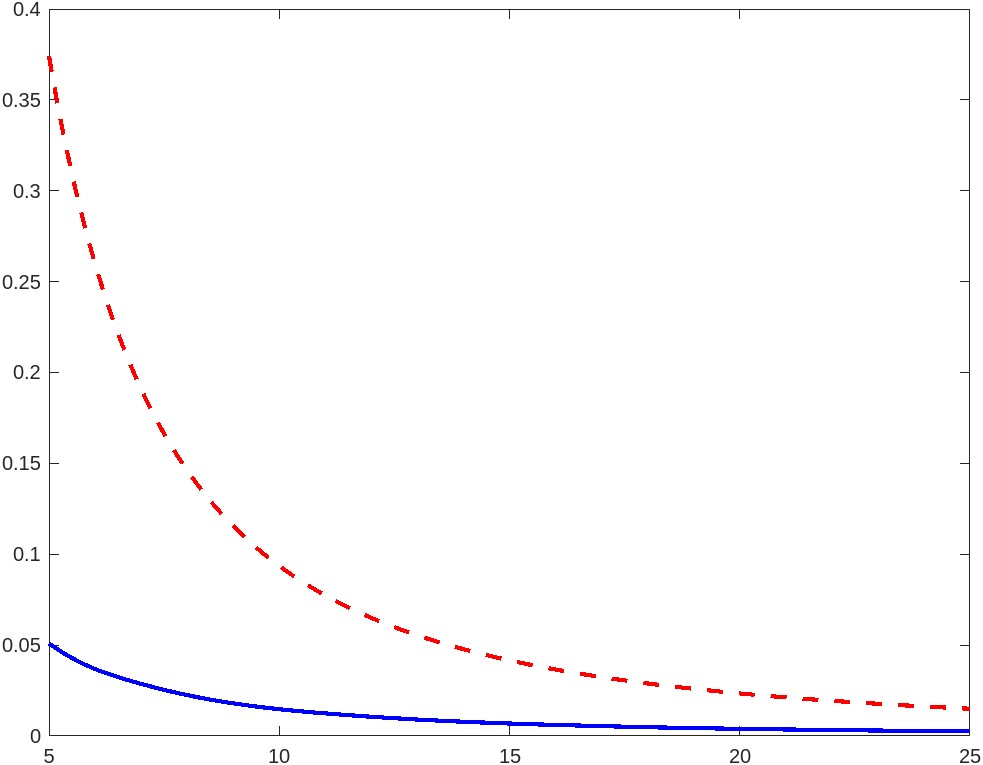}
    \caption{}
    \label{fig:sub1}
  \end{subfigure}
  \hfill
  \begin{subfigure}[b]{0.45\linewidth}
    \centering
    \includegraphics[width=\linewidth]{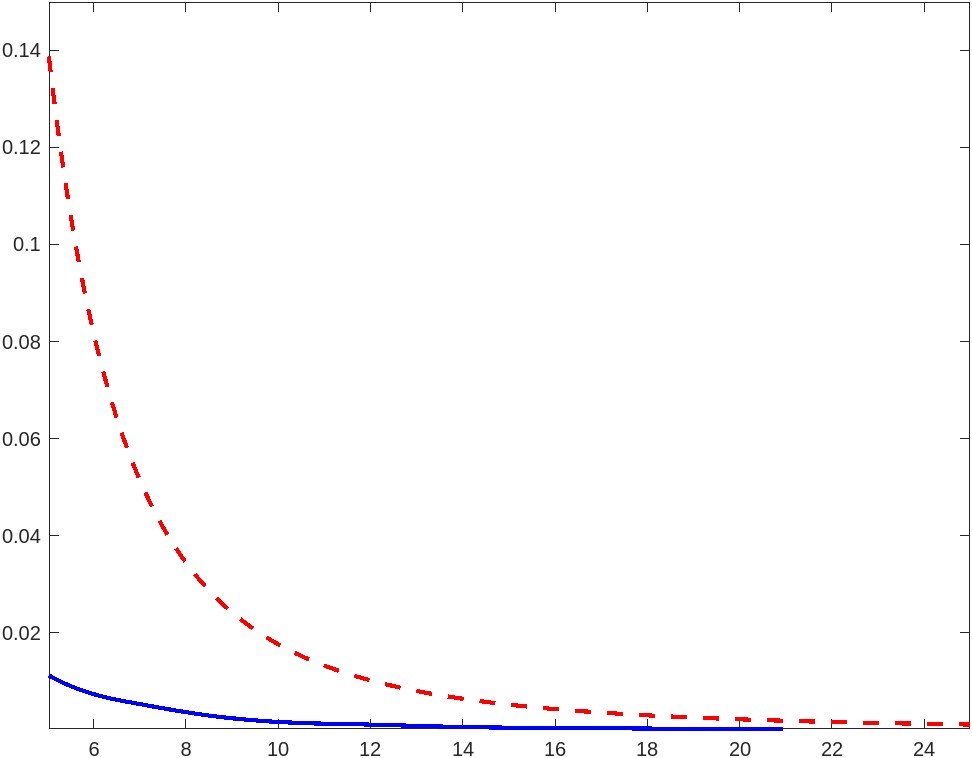}
    \caption{}
    \label{fig:sub2}
  \end{subfigure}
  \caption{\footnotesize(A) The solid line is a plot of the error $E_{1}(w)$ depending on $w$ and the dashed line is a plot of the bound $T_{1}(w)$ of part(iii) of Theorem \ref{thm2}. \vspace{0.1cm}\\ (B) The solid line is a plot of the error $E_{2}(w)$ depending on $w$ and the dashed line is a plot of the bound $T_{2}(w)$ of part (iii) of Theorem \ref{thm2}. }
  \label{Fig:5}
\end{figure}

\end{example}
\begin{example}
   Now we consider the function  $$f(x,y)=\sin(x)\cos(y), \ \ (x,y) \in [-4,4]\times[-4,4].$$ 
   We can view the result of the simultaneous convergence of Theorem \ref{thm4}. We will choose $p=1,\ q=1,\ n=3$ and set $\phi(x)=B_{4}(x),\ \psi(y)=B_{4}(y).$ Clearly, $\phi,  \psi$ satisfies the conditions of Theorem \ref{thm4}.
 We have  $$\dfrac{\partial^{2}f}{\partial x \partial y} =-\cos(x)\sin(y), \ \ (x,y) \in [-4,4]\times[-4,4].$$
   Its graph is shown in the figure \ref{Fig:6}.
   \begin{figure}[H]
  \centering
  \includegraphics[width=0.5\textwidth]{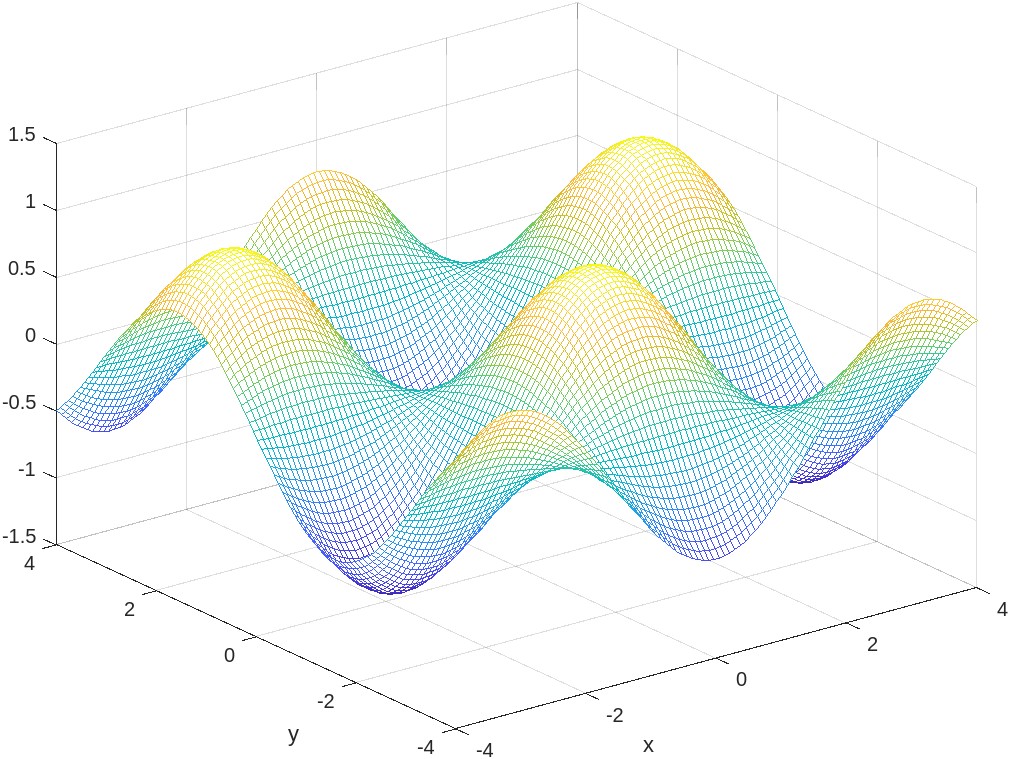}
  \caption{The function $f_{xy}$}\label{Fig:6}
\end{figure}
   Plots of the approximation  $\dfrac{\partial^{2}K_{3,w}^{\phi,\psi}f}{\partial x \partial y}$ of $f_{xy}$ for $w=3,7,12$ are shown in figure \ref{Fig:7}, figure \ref{Fig:8}, and figure \ref{Fig:9}, respectively.
   \begin{figure}[H]
  \centering
  \includegraphics[width=0.5\textwidth]{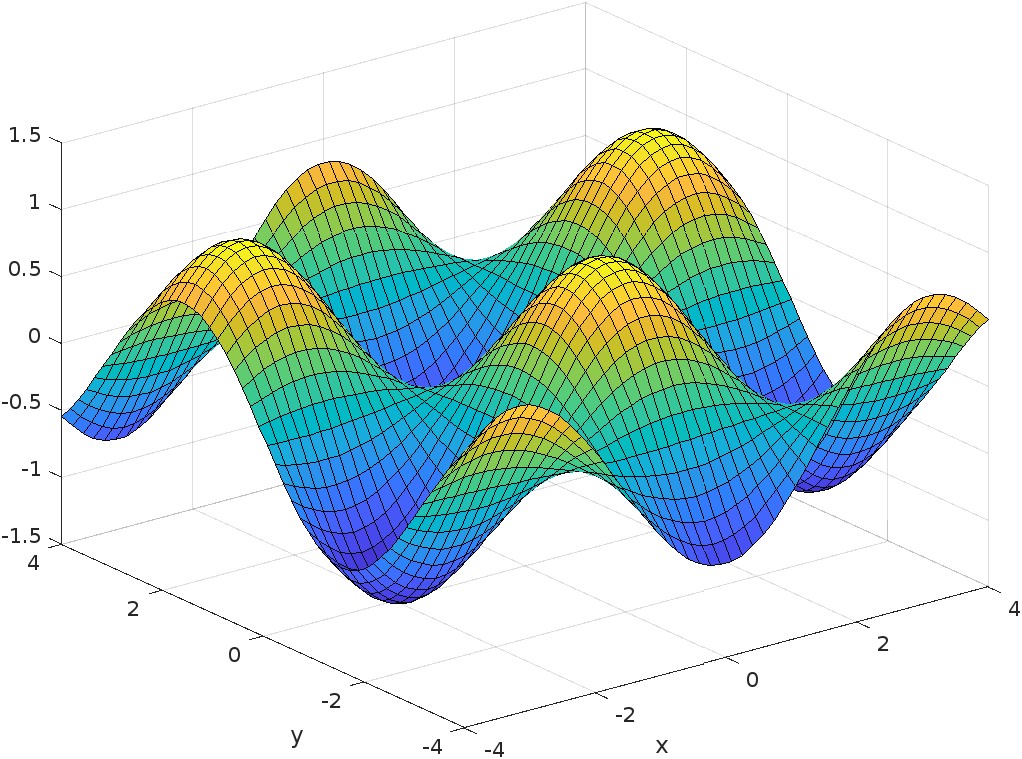}
  \caption{The approximation of $f_{xy}$ by $(K_{3,3}^{\phi,\psi}f)_{xy}$ }\label{Fig:7}
\end{figure}
\begin{figure}[H]
  \centering
  \includegraphics[width=0.5\textwidth]{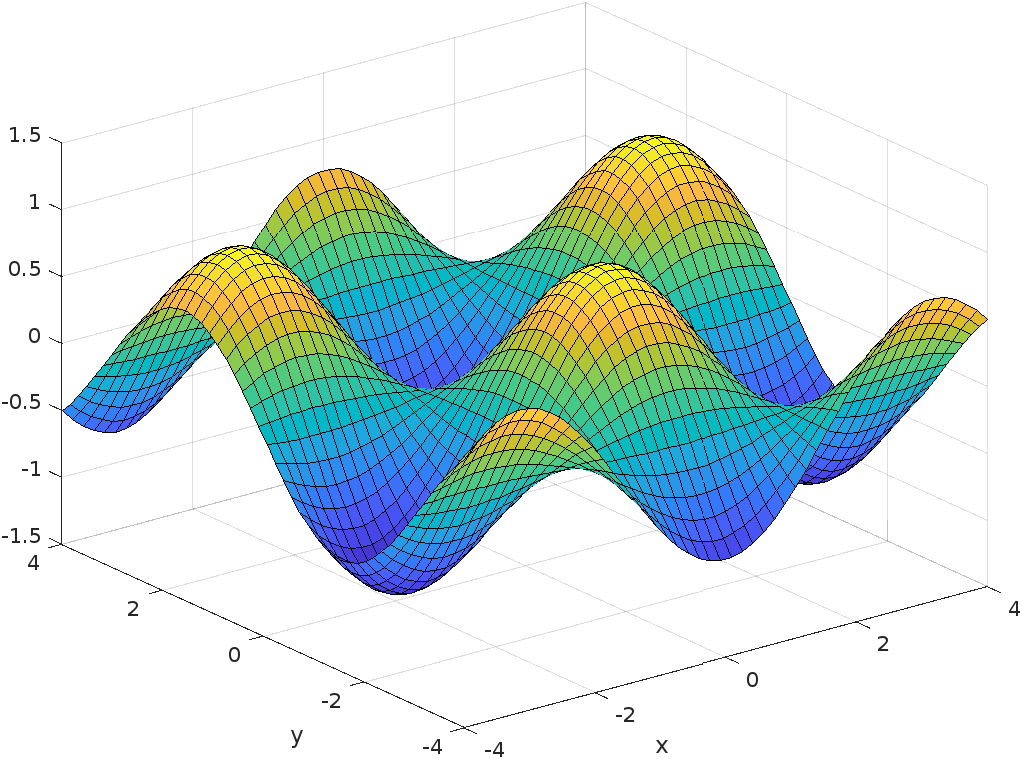}
  \caption{The approximation of $f_{xy}$ by $(K_{3,7}^{\phi,\psi}f)_{xy}$ }\label{Fig:8}
\end{figure}
\begin{figure}[H]
  \centering
  \includegraphics[width=0.5\textwidth]{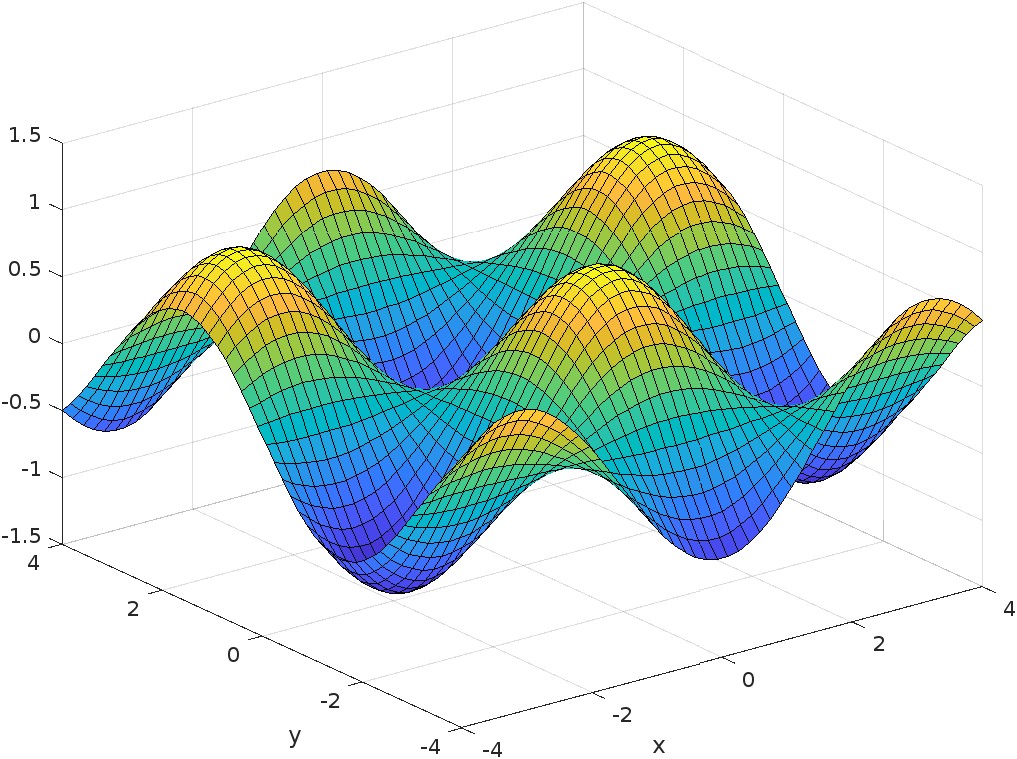}
  \caption{The approximation of $f_{xy}$ by $(K_{3,12}^{\phi,\psi}f)_{xy}$ }\label{Fig:9}
\end{figure}
As $w$ increases, approximation errors will reduce. This can be checked in Table \ref{table1}.
   \begin{table}[H]
\centering
\caption{}
\label{table1}
\begin{tabular}{|p{3cm}|p{4cm}|}
\hline
\centering$w$ &  \rule{0pt}{35pt}$\left\|\dfrac{\partial^{2}K_{3,w}^{\phi,\psi}f}{\partial x \partial y}-\dfrac{\partial^{2}f}{\partial x \partial y}\right\|_{\infty}$ \\[30pt]
\hline
 \centering 3& 0.0790  \\[5pt]
  \centering 7&  0.0151 \\[5pt]
  \centering 12 &  0.0052 \\[5pt]
\hline
\end{tabular}

\end{table}

\end{example}

\section{Conclusions} 
In this work, the approximation behavior of Hermite-type sampling Kantorovich operators in the context of mixed norm spaces are investigated. The direct approximation theorems have been proved, including including the uniform convergence theorem, the Voronovskaja-type asymptotic formula, and an estimate of error in the approximation in terms of the modulus of continuity in mixed norm settings. The rate of convergence of sampling Kantorovich operators in terms of the modulus of continuity has been studied. The simultaneous approximation results of these sampling Kantorovich operators, including the uniform approximation, the asymptotic formula, and the approximation error in terms of the modulus of continuity in mixed settings have been discussed. The realization of differentiable functions is also demonstrated using appropriate kernel functions.

\subsection*{Acknowledgments}

\noindent
Puja thanks the HTRA Fellowship, IIT Madras, for financial support to carry out her research work.
\noindent 
A. Sathish Kumar acknowledges Anusandhan National Research Foundation (ANRF), India, Research Grant: EEQ/2023/000257 for financial support. 

\subsection*{Data Availability:} Not Applicable. 

\subsection*{Declarations}
\noindent
Conflict of interest: The authors declare no competing interests.

\section{References}


\begin{thebibliography}{99}

\bibitem{kan3} T. Acar, D. Costarelli and G. Vinti, Linear prediction and simultaneous approximation by $m$-th order Kantorovich type sampling series, Banach J. Math. Anal. 14 (2020), no. 4, 1481-1508.

\bibitem{conver} T. Acar and B. R. Draganov, A strong converse inequality for generalized sampling operators, Ann. Funct. Anal. 13 (2022), no. 3, Paper No. 36, 16 pp.

\bibitem{metunc} T. Acar, A. Osman, A. Aral, D. Costarelli, M. Turgay and G. Vinti, Convergence of generalized sampling series in weighted spaces, Demonstr. Math. 55 (2022), no. 1, 153-162.

\bibitem{Ald2} A. Aldroubi and K. Gröchenig, Beurling-Landau-type theorems for nonuniform sampling in shift-invariant spaces, J. Fourier Anal. Appl. 6 (1) (2000) 91-101.

\bibitem{Ald3} A. Aldroubi and K. Gröchenig, Nonuniform sampling and reconstruction in shift-invariant spaces, SIAM Rev. 43 (4) (2001) 585-620.


\bibitem{mix1} S. Arati, P. Devaraj and A. K. Garg, Random average sampling and reconstruction in shift-invariant subspaces of mixed Lebesgue spaces, Results Math. 77 (2022), no. 6, Paper No. 223, 38 pp. 



\bibitem{kan4} C. Bardaro and I. Mantellini, Asymptotic formulae for multivariate Kantorovich type generalized sampling series, Acta Math. Sin. (Engl. Ser.) 27 (7) (2011), 1247-1258.

\bibitem{mulima} C. Bardaro, I. Mantellini, R. Stens, J. Vautz and G. Vinti, Generalized sampling approximation for multivariate discontinuous signals and application to image processing, New Perspectives on Approximation and Sampling Theory, Appl. Numer. Harmon. Anal., Birkhäuser, Springer, Cham, (2014), 87-114. 


\bibitem{Lpbut} C. Bardaro, P.L. Butzer, R.L. Stens and G. Vinti, Approximation error of the Whittaker cardinal series in terms of an averaged modulus of smoothness covering discontinuous signals, J. Math. Anal. Appl., 316 (2006) 269-306. 

\bibitem{bmen1} C. Bardaro and I. Mantellini, A quantitative Voronovskaja formula for generalized sampling opera-
tors. East J. Approx. 15, 429–441 (2009). 


\bibitem{kan1} C. Bardaro, G. Vinti, P. L. Butzer and R. L. Stens, Kantorovich-type generalized sampling series in the setting of Orlicz spaces, Sampl. Theory Signal Image Process. 6 (1) (2007), 29-52.

\bibitem{bmen} C. Bardaro and I. Mantellini, Asymptotic formulae for linear combinations of generalized sampling
operators, Z. Anal. Anwend. 32(3), 279–298 (2013)

\bibitem{Aben1} A. Benedek and R. Panzone, The space $L^p$ with mixed norm, Duke Math. J. 28 (1961) 301-324.

\bibitem{Aben2} A. Benedek, A.P. Calder\'on and R. Panzone, Convolution operators on Banach space valued functions, Proc. Natl. Acad. Sci. USA 48 (1962) 356-365.

\bibitem{but1} P. L. Butzer, A survey of the Whittaker-Shannon sampling theorem and some of its extensions. J. Math. Res. Exposit., \textbf{3} (1983), 185-212.

\bibitem{plb} P. L. Butzer, A. Fischer and R.L. Stens, Generalized sampling approximation of multivariate signals; theory and some application, Note di Matematica. 10(1) (1990), 173-191.

\bibitem{plb1} P. L. Butzer, A. Fischer and R.L. Stens, Generalized sampling approximation of multivariate signals; general theory, Atti Sem. Mat. Fis. Univ. Modena. 41(1) (1993), 17-37.

\bibitem{plb3} P.L. Butzer, J. R. Higgins and R. L. Stens, Classical and approximate sampling theorems:
studies in the LP(R) and the uniform norm, J. Approx. Theory. 137(2) (2005), 250-263.

\bibitem{but6} P. L. Butzer and R. L. Stens, A modification of the Whittaker-Kotelnikov-Shannon sampling series, Aequntiones Math. 28 (3) (1985), 305-311.

\bibitem{butss} P. L. Butzer and R. L. Stens, Linear prediction by samples from the past, In: Advanced
Topics in Shannon Sampling and Interpolation Theory (Ed. R. J. Marks II) Springer Texts
Electrical Eng, Springer, New York, (1993).

\bibitem{Stens2} P. L. Butzer and R. L. Stens, Sampling theory for not-necessarily band-limited functions: A historical overview, SIAM Rev. 34 (1992), 40-53.

\bibitem{discont} P. L. Butzer, S. Ries and R. L. Stens, Approximation of continuous and discontinuous functions by generalized sampling series, J. Approx. Theory. 50 (1987), 25-39.

\bibitem{kan5} M. Cantarini, D. Costarelli and G. Vinti, Approximation of differentiable and not differentiable signals by the first derivative of sampling Kantorovich operators, J. Math. Anal. Appl. 509 (2022), no. 1, Paper No. 125913, 20 pp.


\bibitem{cold} L. Coroianu, D. Costarelli, S. G. Gal and G. Vinti, The max-product generalized sampling operators:
convergence and quantitative estimates. Appl. Math. Comput. 355, 173–183 (2019). 

\bibitem{cors} R. Corso, Generalized sampling operators with derivatives samples, J. Math. Anal. Appl. 547 (2025) 129369. 

\bibitem{kan6} D. Costarelli and G. Vinti, An inverse result of approximation by sampling Kantorovich series, Proc. Edinb. Math. Soc. (2) 62 (2019), no. 1, 265-280.

\bibitem{kan7} D. Costarelli and G . Vinti, Inverse results of approximation and the saturation order for the sampling Kantorovich series, J. Approx. Theory 242 (2019), 64-82.

\bibitem{kan8} D. Costarelli and G. Vinti, Saturation by the Fourier transform method for the sampling Kantorovich series based on bandlimited kernels, Anal. Math. Phys. 9 (2019), no. 4, 2263- 2280.

 \bibitem{Fang} G. Fang, Whittaker-Kotelnikov-Shannon sampling theorem and aliasing error, J. Approx. Theory 85 (1996) 115-131. 

\bibitem{Feic} H.G. Feichtinger, New results on regular and irregular sampling based on Wiener amalgams,in Proc. Conf. Function Spaces, K. Jarosz, ed., Lecture Notes in Math. 136, Springer-Verlag, New York, 1991, pp. 107-121.

\bibitem{Fer} D.L. Fernandez, Vector-valued singular integral operators on $L^p$-spaces with mixed norms and applications, Pacific J. Math. 129 (2) (1987) 257-275.

\bibitem{gar1} A. G. Garc\'ia and P.V. Gerardo, Approximation from shift-invariant spaces by generalized sampling formulas, Appl. Comput. Harmon. Anal. 24 (2008), no. 1, 58-69. 

\bibitem{Heil} C. Heil, Harmonic Analysis and Applications, Birkhäuser, Boston, 2006.

\bibitem{jagf} D.L. Jagerman and L.J. Fogel, Some general aspects of the sampling theorem, IRE Trans. Inf. Theory 2 (4) (1956) 139-146.

\bibitem{jeri} A. Jerri, The Shannon sampling theorem-its various extensions and applications: A review, Proc. IEEE, 65 (1977),
pp. 1

\bibitem{mix2} Y. Jiang and J. Kou, Semi-Average Sampling for Shift-Invariant Signals in a Mixed Lebesgue Space, Numer. Funct. Anal. Optim. 41:9 (2020) 1045-1064. 

\bibitem{mix3} Y. Jiang and W. Li, Random sampling in multiply generated shif-invariant sub-spaces of mixed Lebesgue spaces $L^{p,q}(\mathbb{R}\times\mathbb{R}^d)$, J. Comput. Appl. Math. 386, 113237 (2021). 


\bibitem{mix4} R. Li, b. Liu and R. Liu, Q. Zhang, Nonuniform sampling in principal shift-invariant subspaces of mixed Lebesgue spaces $L^{p,q}(\mathbb{R}^{d+1}),$ J. Math. Anal. Appl. 453 (2017) 928–941. 

\bibitem{dal} D.A. Linden and N.M. Abramson, A generalization of the sampling theorem, Inf. Control 3 (1) (1960) 26–31, see also Errata:
Inf. Control 4 (1) (1961) 95–96.

\bibitem{mix5} G. Prashant, D. Patel and S. Sivananthan, Random sampling and reconstruction in reproducing kernel subspace of mixed Lebesgue spaces, Math. Methods Appl. Sci. 46 (2023), no. 5, 5119-5138.

\bibitem{Rah} Q.I. Rahman and P. V\'ertesi, On the $L^p$ convergence of Lagrange interpolating entire functions of exponential type, J. Approx. Theory 69 (1992) 302-317.

\bibitem{sun} Q. Sun, Nonuniform average sampling and reconstruction of signals with finite rate of innovation, SIAM J. Math. Anal. 38 (5) (2006) 1389-1422.

\bibitem{mixx} R. Torres and E. Ward, Leibniz's rule, sampling and wavelets on mixed Lebesgue spaces, J. Fourier Anal. Appl. 21(5) (2015) 1053-1076.

\bibitem{kan2} G. Vinti and L. Zampogni, Approximation by means of nonlinear Kantorovich sampling type operators in Orlicz spaces, J. Approx. Theory. 161 (2009), 511-528.


\end{thebibliography}
\end{document}